\newtheorem{theorem}{Theorem}[section]
\newtheorem*{theorem*}{Theorem}
\newtheorem{proposition}[theorem]{Proposition}
\newtheorem{lemma}[theorem]{Lemma}
\newtheorem{corollary}[theorem]{Corollary}
\theoremstyle{definition}
\newtheorem{definition}[theorem]{Definition}
\theoremstyle{remark}
\newtheorem{remark}[theorem]{Remark}
\newtheorem*{remark*}{Remark}
\newcommand{\con}[1]{\mathbb{#1}}
\newcommand{\R}{\con{R}} 
\newcommand{\N}{\con{N}} 
\newcommand{\Sph}{\con{S}} 
\newcommand{\ccal}{\mathscr{C}}
\newcommand{\ecal}{\mathcal{E}}
\newcommand{\ical}{\mathcal{I}}
\newcommand{\ncal}{\mathcal{N}}
\newcommand{\ocal}{\mathcal{O}}
\newcommand{\leqnomode}{\tagsleft@true\let\veqno\@@leqno}
\newcommand{\reqnomode}{\tagsleft@false\let\veqno\@@eqno}
\newcommand{\norm}[1]{\left | \left |{#1} \right | \right |}
\newcommand{\s}{\gamma}
\newcommand{\fraclaplacian}{(-\Delta)^\s}
\newcommand{\Lip}{\mathrm{Lip}}
\renewcommand{\d}{\,\mathrm{d}} 
\newcommand{\dx}{\,\mathrm{d}x} 
\newcommand{\bpar}[1]{\left ( {#1}\right )}
\newcommand{\setcond}[2]{\left \{ #1 \ : \ #2  \right \}}
\newcommand\beqc[1]{\left\{\begin{array}{#1}}
\newcommand\eeqc{\end{array} \right.}
\def\PDEsystem{rcll}
\def\bmatrix{\begin{pmatrix}}
\def\ematrix{\end{pmatrix}}
\DeclareMathOperator{\dist}{dist}
\DeclareMathOperator{\PV}{P.V.}
\let\div\relax
\DeclareMathOperator{\div}{div}
\def\ds{\displaystyle}
\numberwithin{equation}{section}
\title[Saddle solution to the fractional Allen-Cahn equation]{Uniqueness and stability of the saddle-shaped solution to the fractional Allen-Cahn equation}
\author{Juan-Carlos Felipe-Navarro} 
\address{J.C. Felipe-Navarro:
Universitat Polit\`ecnica de Catalunya and BGSMath, Departament de Matem\`{a}tiques, Diagonal 647, 08028 Barcelona, Spain}
\email{juan.carlos.felipe@upc.edu}
\author{Tomás Sanz-Perela}
\address{T. Sanz-Perela:
Universitat Polit\`ecnica de Catalunya and BGSMath, Departament de Matem\`{a}tiques, Diagonal 647, 08028 Barcelona, Spain}
\email{tomas.sanz@upc.edu}
\thanks{Both authors acknowledge financial support from the Spanish Ministry of Economy and Competitiveness (MINECO), through the María de Maeztu Programme for Units of Excellence in R\&D (MDM-2014-0445-16-4 and MDM-2014-0445, respectively), are supported by MINECO grants MTM2014-52402-C3-1-P and MTM2017-84214-C2-1-P, are members of the Barcelona Graduate School of Mathematics (BGSMath), and are part of the Catalan research group 2017 SGR 01392.}
\keywords{Fractional Allen-Cahn equation, saddle-shaped solution, Simons cone, nonlocal minimal surfaces, stability}
\begin{document}

\begin{abstract}
In this paper we prove the uniqueness of the saddle-shaped solution $u\colon \R^{2m} \to \R$ to the semilinear nonlocal elliptic equation $\fraclaplacian u = f(u)$ in $\R^{2m}$, where $\s \in (0,1)$ and $f$ is of Allen-Cahn type. Moreover, we prove that this solution is stable whenever $2m\geq 14$. As a consequence of this result and the connection of the problem with nonlocal minimal surfaces, we show that the Simons cone $\setcond{(x', x'') \in \R^{m}\times \R^m}{|x'| = |x''|}$ is a stable nonlocal $(2\s)$-minimal surface in dimensions $2m\geq 14$.

Saddle-shaped solutions of the fractional Allen-Cahn equation are doubly radial, odd with respect to the Simons cone, and vanish only in this set. It was known that these solutions exist in all even dimensions and are unstable in dimensions $2$, $4$, and $6$. Thus, after our result, the stability remains an open problem only in dimensions $8$, $10$, and $12$.

The importance of studying this type of solution is due to its relation with the fractional version of a conjecture by De Giorgi. Saddle-shaped solutions are the simplest non 1D candidates to be global minimizers in high dimensions, a property not yet established in any dimension.
\end{abstract}

\maketitle

\section{Introduction}

This paper is devoted to the study of saddle-shaped solutions to the fractional Allen-Cahn equation
\begin{equation}
\label{Eq:AllenCahn}
\fraclaplacian u = f(u)  \quad \text{ in } \R^{n}\,,
\end{equation}
where $n=2m$ is an even integer, $f$ is of bistable type (see \eqref{Eq:fHypotheses} below), and $\fraclaplacian$ is the fractional Laplacian, defined for $\s\in(0,1)$ by
$$
\fraclaplacian u (x):= c_{n,\s}  \PV \int_{\R^{n}} \dfrac{u(x) - u(\tilde{x})}{|x-\tilde{x}|^{n+2\s}} \d \tilde{x}\,.
$$
Here $c_{n,\s}>0$ is a normalizing constant depending only on $n$ and $\s$, and $\PV$ stands for principal value. This problem is motivated by the fractional De Giorgi conjecture and it is closely related to the theory of nonlocal minimal surfaces, as we will explain later in this introduction.

Throughout the paper we assume that $f\in C^{2,\alpha}\big((-1,1)\big)$, for some $\alpha\in (0,1)$, and that is of bistable type, i.e.,
\begin{equation}\label{Eq:fHypotheses}
f \text{ is odd, }\; f(0)=f(1)=0,\text{ and }
f''<0 \text{ in } (0,1).
\end{equation}
Note that as a consequence we have $f>0$ in $(0,1)$. A typical example of this kind of nonlinearity is $f(u)=u-u^3$.

An important role in this paper is played by the Simons cone, which is defined in $\R^{2m}$ by
$$
\mathscr{C} := \setcond{x = (x', x'') \in \R^{m}\times \R^m}{|x'| = |x''|}\,.
$$
It is well known that the Simons cone has zero mean curvature at every point $x \in \ccal \setminus \{0\}$, in every dimension $2m \geq 2$. However, it is only in dimensions $2m \geq 8$ that $\ccal$ is a minimizer of the area functional, as established by Bombieri, De Giorgi, and Giusti in \cite{BombieriDeGiorgiGiusti}. Regarding the fractional setting, for every $\s\in(0,1/2)$, $\ccal$ has zero \emph{nonlocal} mean curvature in every even dimension but it is not known if, in addition,  it is a minimizer of the fractional perimeter in dimensions $2m\geq 8$. We recall that it is only in dimension $2m=2$ where we have a complete classification of minimizing nonlocal minimal cones, establishing that they must be flat (see \cite{SavinValdinoci-Cones}). The same classification result for \emph{stable} nonlocal minimal cones holds also in $\R^2$ (see \cite{SavinValdinoci-Stable}), and in $\R^3$ and for $\s$ close to $1/2$ (see \cite{CabreCintiSerra-Cones}). Recall that by stability we understand that the second variation of the energy functional is nonnegative (and thus, it is a weaker property than minimality). In higher dimensions $n$, the classification of nonlocal minimal cones is widely open and the main result in this direction is the one in \cite{CaffarelliValdinoci}, establishing that minimizing nonlocal minimal cones are flat in dimensions $2\leq n\leq 7$ for $\gamma$ close to $1/2$.  It is also an open problem to find, in high dimensions, an example of nonsmooth minimizing nonlocal minimal surface. A main candidate for this is, as in the local case, the Simons cone.

The only other result (apart from the previous ones) concerning the possible minimality of the Simons cone refers to its stability, and it is proved in \cite{DaviladelPinoWei} by D\'avila, del Pino, and Wei.  In that paper, the authors characterize the stability of Lawson cones through an inequality involving only two hypergeometric constants which depend only on $\s$ and the dimension $n$. It is a hard task to verify the criterion analytically, and this has not been accomplished. It seems also delicate to check it numerically, but some cases are treated in \cite{DaviladelPinoWei}. With a numerical computation, \cite{DaviladelPinoWei} finds that, in dimensions $n \leq 6$ and for $\s $ close to zero, no Lawson cone with zero nonlocal mean curvature is stable. The Simons cone is a particular case of Lawson cone corresponding to $C_m^m(2\s)$ in the notation of \cite{DaviladelPinoWei}. Numerics also shows that all Lawson cones in dimension $7$ are stable if $\s$ is close to zero. These results for small $\s$ fit with the general belief that, in the fractional setting, the Simons cone should be stable (and even a minimizer) in dimensions $2m \geq 8$ (as in the local case), probably for all $\s\in(0,1/2)$, though this is still an open problem. 

In the present paper, we make a first contribution to the previous question by showing that the Simons cone is a stable $(2\s)$-minimal cone in dimensions $2m\geq 14$. Our proof uses the so-called saddle-shaped solution to the Allen-Cahn equation. As we will see in more detail, by the fractional Modica-Mortola type $\Gamma$-convergence result, the remarks above on the stability of the Simons cone are expected to hold also for saddle-shaped solutions. Indeed, our proof proceeds by establishing the stability of such solution to the fractional Allen-Cahn equation in dimensions $2m \geq 14$ (see Theorem~\ref{Thm:Stability} below). Then, as a consequence of this and a recent result by Cabr\'e, Cinti, and Serra in \cite{CabreCintiSerra-Stable} (see also the comments in \cite{CabreCintiSerra-Cones}) concerning the preservation of stability along a blow-down procedure for the fractional Allen-Cahn equation, we deduce the stability of the Simons cone as a nonlocal minimal surface in these dimensions (see Corollary~\ref{Cor:SimonsConeStableDim14}). 

To introduce saddle-shaped solutions, we define the following variables:
$$
s := \sqrt{x_1^2 + \ldots + x_m^2 } \quad \text{ and } \quad 
t := \sqrt{x_{m+1}^2 + \ldots + x_{2m}^2}\,,
$$
for which the Simons cone becomes $\mathscr{C} = \{s=t\}$. Through the paper we will also use the letter $\ocal$ to denote one of the sets in which the cone divides the space:
$$
\ocal:= \setcond{x = (x', x'') \in \R^{m}\times \R^m}{|x'| > |x''|} = \{s > t\}.
$$

We define saddle-shaped solutions as follows.

\begin{definition}
	\label{Def:SaddleShapedSol}
	We say that a bounded solution $u$ to \eqref{Eq:AllenCahn} is a \emph{saddle-shaped solution} (or simply \emph{saddle solution}) if
	\begin{enumerate}[label=(\roman{*})]
		\item $u$ is a doubly radial function, that is, $u = u(s,t)$.
		\item $u$ is odd with respect to the Simons cone, that is, $u(s,t)=-u(t,s)$.
		\item $u > 0$ in $\ocal = \{s>t\}$.
	\end{enumerate}
\end{definition}

Saddle-shaped solutions for the classical Allen-Cahn equation involving the Laplacian were first studied by Dang, Fife, and Peletier in \cite{DangFifePeletier} in dimension $2m=2$. They established the existence and uniqueness of this type of solutions, as well as some monotonicity properties and asymptotic behavior. In \cite{Schatzman}, Schatzman studied the instability property of saddle solutions in $\R^2$. Later, Cabr\'e and Terra  proved the existence of a saddle solution in every dimension $2m\geq 2$, and they established some qualitative properties such as asymptotic behavior, monotonicity properties, as well as instability in dimensions $2m = 4$ and $2m = 6$ (see \cite{CabreTerraI,CabreTerraII}). The uniqueness in dimensions higher than $2$ was established by Cabr\'e in \cite{Cabre-Saddle}, where he also proved that the saddle solution is stable in dimensions $2m \geq 14$.

In the nonlocal framework, there are only two works concerning saddle-shaped solutions to \eqref{Eq:AllenCahn}. In  \cite{Cinti-Saddle,Cinti-Saddle2}, first for $\s=1/2$ and then for $\s\in(0,1)$, Cinti proved the existence of a saddle-shaped solution to \eqref{Eq:AllenCahn} as well as some qualitative properties such as asymptotic behavior, monotonicity properties, and instability in low dimensions (see Theorem~\ref{Th:Summary} below). 

In the present paper, we prove further properties of these solutions, the main ones being uniqueness and, when $2m\geq 14$, stability. Uniqueness is important since then the saddle-shaped solution becomes a canonical object associated to the Allen-Cahn equation and the Simons cone. 

In \cite{Cinti-Saddle,Cinti-Saddle2}, the main tool used is the extension problem for the fractional Laplacian (see \eqref{Eq:AllenCahnWithExtension} below and \cite{CaffarelliSilvestre}). This is also the approach of the present paper. It should be remarked that the extension technique has the limitation that it only works for the fractional Laplacian, and therefore the same arguments cannot be carried out for more general integro-differential operators of the form
$$
L_K u(x) = \PV \int_{\R^n} \{u(x) - u(\tilde{x})\} K(x-\tilde{x})\d \tilde{x}.
$$
In two forthcoming papers \cite{FelipeSanz-Perela:IntegroDifferentialI,FelipeSanz-Perela:IntegroDifferentialII} we address this problem by studying saddle-shaped solutions to equation $L_K u = f(u)$ in $\R^{2m}$, where $L_K$ is an elliptic integro-differential operator of the previous form with a radially symmetric kernel $K$. One of the most basic tools that we need is a maximum principle for the operator acting on  functions which are odd with respect to the Simons cone. In \cite{FelipeSanz-Perela:IntegroDifferentialI} we find a necessary and sufficient condition to have such a maximum principle and, as we will see there, this will require a certain convexity property of the kernel $K$.

Let us now introduce the extension problem for the fractional Laplacian, which is the main tool used in this paper. First we should settle the notation. We call $\R^{n+1}_+ := \R^n \times (0, +\infty)$ and denote points by $(x,\lambda)\in \R^{n+1}_+$ with $x\in \R^n$ and $\lambda > 0$. As it is well known, see~\cite{CaffarelliSilvestre}, if $u:\R^{n+1}_+ \to \R$ solves $\div(\lambda^a \nabla u) = 0$ in $\R^{n+1}_+$ with $a=1-2\s$, then
$$
\dfrac{\partial u}{\partial \nu^a} (x) := -\lim_{\lambda \downarrow 0} \lambda^a u_\lambda (x, \lambda) = \dfrac{\fraclaplacian u (x,0)}{d_\s},
$$
where $d_\s$ is a positive constant depending only on $\s$. Therefore, problem \eqref{Eq:AllenCahn} is equivalent to
\begin{equation}
\label{Eq:AllenCahnWithExtension}
\beqc{\PDEsystem}
\div(\lambda^a \nabla u) &=& 0 & \textrm{ in } \R^{n+1}_+\,, \\
d_\s \dfrac{\partial u}{\partial \nu^a} &=& f(u) & \textrm{ on } \partial \R^{n+1}_+ = \R^n\,.
\eeqc
\end{equation}

We will always consider functions defined in $\R^{n+1}_+$ and not only in $\R^n$, and we will use the same letter to denote both the function and its trace on $\R^n$. Regarding sets in $\R^{n+1}_+$, we use the following notation. If $\Omega \subset \R^{n+1}_+$, we define
\begin{equation}
\label{Eq:DefBoundaries}
\partial_L \Omega := \overline{\partial \Omega \cap \{\lambda > 0\}}
\quad \text{ and } \quad \partial_0 \Omega := \partial \Omega \setminus \partial_L \Omega \subset \{\lambda = 0\}\,.
\end{equation}
We write
$$
B_{R}^+ := \setcond{(x,\lambda)\in \R^{n+1}_+}{|(x,\lambda)|< R},
$$
for half-balls in $\R^{n+1}_+$. If $x_0\in \R^n$, $B_{R}^+(x_0)=(x_0,0)+B_{R}^+$.

A certain solution of problem~\eqref{Eq:AllenCahn} in dimension 1, the so-called \emph{layer solution}, plays a crucial role through this paper. It is the unique solution of the following problem:

\begin{equation}
\label{Eq:LayerSolution}
\beqc{\PDEsystem}
\div(\lambda^a \nabla u_0) &=& 0 & \textrm{ in } \R^{2}_+ = \R \times (0, +\infty)\,, \\
d_\s \dfrac{\partial u_0}{\partial \nu^a} &=& f(u_0) & \textrm{ on } \partial \R^{2}_+ = \R\,,\\
\partial_x u_0 &>& 0 & \textrm{ on } \partial \R^{2}_+ = \R\,,\\
u_0 (0,0) &=& 0 \,,\\
\ds \lim_{x\to\pm \infty} u_0 (x,0) &=& \pm 1\,.
\eeqc
\end{equation}
Under the assumptions on $f$ in \eqref{Eq:fHypotheses}, the existence and uniqueness of such solution are well known (see \cite{CabreSireI}).

The importance of the layer solution comes from the fact that the associated function
\begin{equation}
\label{Eq:DefULayer}
U(x,\lambda) := u_0\left( \frac{s-t}{\sqrt{2}},\lambda \right) \ \ \ \text{ for } x\in\R^{2m} \text{ and } \lambda>0,
\end{equation}
which is odd with respect to the Simons cone and positive in $\ocal\times [0,+\infty)$, describes the asymptotic behavior of saddle-shaped solutions at infinity (as shown in \cite{Cinti-Saddle, Cinti-Saddle2}; see Theorem~\ref{Th:Summary} below).  Note that from Lemma~4.2 in \cite{CabreTerraI}, we know that $|s-t|/\sqrt{2}$ is the distance to the Simons cone. Therefore, we can understand the function $U$ as the layer solution centered at each point of the Simons cone and oriented in the normal direction to the cone. Moreover, in this paper we  show (see Proposition~\ref{Prop:SaddleUnderLayer}) that the saddle-shaped solution lies below $U$ in $\ocal$, as it occurs in the local case (see Proposition 1.5 in \cite{CabreTerraI}).

It is sometimes useful to consider also the following variables:
$$
y := \dfrac{s+t}{\sqrt{2}} \quad \text{ and } \quad z := \dfrac{s-t}{\sqrt{2}} \,,
$$
which satisfy $y\geq 0$ and $-y \leq z \leq y$. In these variables, $\ccal = \{ z = 0\}$ and $\ocal = \{ z > 0\}$. Therefore, we can write $U(x,\lambda) = u_0(z, \lambda)$.

To study the minimality and stability of the saddle-shaped solution, we recall the energy functional associated to equation \eqref{Eq:AllenCahnWithExtension}:
$$
\ecal (w, \Omega) := \dfrac{d_\s}{2}\int_{\Omega} \lambda^a | \nabla w |^2 \d x \d \lambda + \int_{\partial_0 \Omega} G(w)\d x\,, \quad \text{ where }\ G' = -f\,.
$$
We say that $u$ is a minimizer for problem \eqref{Eq:AllenCahnWithExtension} in  $\Omega \subset \R^{2m+1}_+$ if
$$
\ecal(u,\Omega) \leq \ecal(w,\Omega)
$$
for every $w$ such that $w=u$ on $\partial_L \Omega$. Observe that the admissible competitors do not have the boundary condition prescribed on $\partial_0\Omega$. This is in correspondence with the Neumann condition in \eqref{Eq:AllenCahnWithExtension}. We say that $u$ is a global minimizer if it is a minimizer in every bounded domain $\Omega$ of $\R^{2m+1}_+$.

A bounded solution to \eqref{Eq:AllenCahnWithExtension} is said to be \emph{stable} if the second variation of the energy with respect to perturbations $\xi$ which have compact support in $\overline{\R^{2m+1}_+}$ is nonnegative. That is, if
\begin{equation}
\label{Eq:StabilityCondition}
\int_{\R^{2m}} f'(u) \, \xi^2 \d x  \leq d_\s \int_0^\infty \int_{\R^{2m}} \lambda^a \, |\nabla \xi|^2 \d x \d \lambda 
\end{equation}
for every $\xi \in C^\infty_c(\overline{\R^{2m+1}_+})$.

In the following theorem we collect the known results concerning saddle-shaped solutions to \eqref{Eq:AllenCahn}. 

\begin{theorem}[\cite{Cinti-Saddle,Cinti-Saddle2,CabreSolaMorales,CabreSireII}]
	\label{Th:Summary}
	Let $\s \in (0,1)$  and let $f\in C^{2,\alpha}\big((-1,1)\big)$ be a function satisfying \eqref{Eq:fHypotheses}.
	\begin{enumerate}[label=(\roman{*})]
		\item For every even dimension $2m\geq 2$, there exists a saddle-shaped solution to  problem~\eqref{Eq:AllenCahn} with $|u|<1$.
		\item For every even dimension $2m\geq 2$, every saddle-shaped solution to problem~\eqref{Eq:AllenCahn} satisfies
		$$ \big|\big| \, |u-U| + |\nabla_x(u-U)| \, \big|\big|_{L^\infty(\R^{2m}\setminus B_R)} \to 0, \ \ \ \text{ as } \ R\to+\infty, $$
		where $U$ is defined in \eqref{Eq:DefULayer}.
		\item In  dimension $2m$ with $2\leq 2m \leq 6$, every saddle-shaped solution is unstable.
	\end{enumerate}
\end{theorem}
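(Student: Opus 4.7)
The three assertions call for different techniques, all set up in the extension formulation \eqref{Eq:AllenCahnWithExtension}, which preserves both double radiality and oddness across $\ccal$. For existence (i), I would use constrained minimization: fix $R>0$ large and minimize $\ecal(\,\cdot\,, B_R^+)$ over the affine class of functions in the weighted Sobolev space that are doubly radial, odd with respect to $\ccal$, and coincide with $U$ on $\partial_L B_R^+$. The direct method produces a minimizer $u_R$; odd symmetry forces $u_R \equiv 0$ on $\ccal\cap B_R^+$, while truncation against the constants $\pm 1$ and the strong maximum principle give $|u_R|<1$. Uniform boundary $C^{1,\alpha}$ estimates for the degenerate operator $\div(\lambda^a\nabla\cdot)$ with the nonlinear Neumann term then allow me to send $R\to\infty$ and extract a saddle-shaped solution on $\R^{2m}$; positivity of the limit in $\ocal$ follows from a Modica-type energy bound showing $u_R \not\equiv 0$, combined with the strong maximum principle.

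For (ii) I would argue by compactness together with the uniqueness of the one-dimensional layer. Assume the conclusion fails along a sequence $x_k \in \R^{2m}$ with $|x_k|\to\infty$, and translate: $u_k(y,\lambda):=u(x_k+y,\lambda)$ and $U_k(y,\lambda):=U(x_k+y,\lambda)$. Interior and boundary $C^{1,\alpha}_{\loc}$ estimates for the extension problem give, up to subsequence, $C^1_{\loc}(\overline{\R^{2m+1}_+})$ limits $u_\infty$ and $U_\infty$, both bounded solutions of \eqref{Eq:AllenCahnWithExtension}. Two regimes arise. If $\dist(x_k,\ccal)$ stays bounded, the translated Simons cone blows down to a hyperplane $H$; the limit $u_\infty$ inherits oddness across $H$, and a sliding argument shows it depends only on the signed distance to $H$, so by uniqueness in \eqref{Eq:LayerSolution} it must be a translate of the one-dimensional layer solution. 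The same argument applied to $U_k$ yields the identical limit. If $\dist(x_k,\ccal)\to\infty$, sliding $u_k$ against the constants $\pm 1$ forces $u_\infty \equiv \pm 1$, and the same holds for $U_\infty$. Both cases contradict the standing assumption; the gradient statement follows from the same $C^{1,\alpha}$ bounds.

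Finally, for (iii) I would exhibit an explicit test function that violates \eqref{Eq:StabilityCondition}. Motivated by the local analysis in \cite{CabreTerraI,CabreTerraII}, the natural choice is $\xi(x,\lambda)=\varphi(y)\,\partial_z u_0(z,\lambda)$, with $z=(s-t)/\sqrt{2}$, $y=(s+t)/\sqrt{2}$, and $\varphi$ a logarithmic cut-off in $y$. The factor $\partial_z u_0$ is a Jacobi-type field for the linearized operator along the layer; integrating the quadratic form in doubly radial coordinates produces a curvature term carrying the sign of the (weighted) mean curvature of $\ccal$, offset by a loss coming from $|\nabla\varphi|$. A dimensional count shows that the curvature term wins precisely when $2m\le 6$. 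I expect this to be the main obstacle of the whole theorem: in the nonlocal framework one must control the cross-interaction between the cut-off $\varphi$ and the slowly decaying extension tails of $\partial_z u_0$ in both $x$ and $\lambda$, and the admissible cut-off scale depends sensitively on $\s$, so the dimensional analysis is considerably more delicate than in the classical setting.
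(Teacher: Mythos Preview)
This theorem is not proved in the paper; it is a summary of results from \cite{Cinti-Saddle,Cinti-Saddle2,CabreSolaMorales,CabreSireII}, and the text following its statement only attributes each part to the relevant source. Your outline for (i) and for (iii) in dimensions $2m=4,6$ is essentially Cinti's strategy. For $2m=2$, note that the paper credits instability instead to the general one-dimensional symmetry theorem for stable solutions in \cite{CabreSireII,CabreSolaMorales}; your test-function ansatz is not the route taken there, though it may also succeed.

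The substantive gap is in part (ii), in the identification of the limit after translation. In both regimes you invoke a ``sliding argument'' without saying what is being slid or why it works. When $\dist(x_k,\ccal)\to\infty$, the limit $u_\infty$ is a bounded solution of \eqref{Eq:AllenCahnWithExtension} on the full half-space with $0\le u_\infty\le 1$; sliding against the constants $\pm 1$ yields nothing by itself, and in the references the conclusion $u_\infty\equiv 1$ comes from a half-space Liouville theorem applied after one knows $u_\infty$ is \emph{stable} (the paper reuses exactly this scheme in the proof of Lemma~\ref{Lemma:AsymptoticSecondDerivative}, citing Theorem~5.3 of \cite{Cinti-Saddle2}). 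When $\dist(x_k,\ccal)$ stays bounded, oddness across the limiting hyperplane does pass to the limit, but one-dimensionality of $u_\infty$ is again a Liouville-type statement in a quarter-space (Theorem~5.5 of \cite{Cinti-Saddle2}, proved in \cite{Tan}) that requires stability of the limit. In Cinti's work this stability input is produced from the monotonicity properties $u_s>0$, $u_t<0$ of the specific (maximal) saddle solution she constructs; you never say where stability of $u_\infty$ comes from for an arbitrary saddle-shaped solution, and without it the limit identification is the missing step, not a technicality. The compactness framework you set up is correct and is precisely what the paper and the cited references use---the Liouville theorems and the stability-from-monotonicity link are what need to be made explicit.
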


Here $\nabla_x$ denotes the gradient only in the horizontal variables $x\in \R^{2m}$, not to be confused with the gradient $\nabla = \nabla_{(x,\lambda)}$ in \eqref{Eq:AllenCahnWithExtension} or \eqref{Eq:StabilityCondition}, for instance.

Points (i) and (ii) of Theorem~\ref{Th:Summary} were proved by Cinti, first for $\s = 1/2$  in \cite{Cinti-Saddle} and then extended to all powers $\s \in (0,1)$ in \cite{Cinti-Saddle2}. Instability in dimension $2m = 2$ follows from a general result on stable solutions established in \cite{CabreSireII} (previously proved for $\s = 1/2$ in \cite{CabreSolaMorales}). Instead, instability in dimensions $2m=4$ and $2m=6$ was proved in \cite{Cinti-Saddle,Cinti-Saddle2}.

Our first main result is the uniqueness of the saddle-shaped solution. As a consequence, such solution to the fractional Allen-Cahn equation becomes a canonical object associated to the cone $\ccal$.

\begin{theorem}
	\label{Thm:Uniqueness}
	Let $\s \in (0,1)$  and let $f$ be a function satisfying \eqref{Eq:fHypotheses}. Then, for every even dimension $2m\geq 2$, there exists a unique saddle-shaped solution to problem~\eqref{Eq:AllenCahnWithExtension}.
\end{theorem}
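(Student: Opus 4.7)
The plan is to prove uniqueness by a maximum principle argument applied to the difference of two saddle-shaped solutions. Let $u_1$ and $u_2$ be two saddle solutions of the extended problem \eqref{Eq:AllenCahnWithExtension} and set $v := u_1 - u_2$. Since both $u_i$ are doubly radial and odd with respect to $\ccal$, so is $v$, and in particular $v$ vanishes on $\ccal \times [0,+\infty)$. Applying Theorem~\ref{Th:Summary}(ii) to each $u_i$ gives $\norm{v}_{L^{\infty}(\R^{2m}\setminus B_R)} \to 0$ as $R \to \infty$, and standard bounds for the $\lambda^a$-harmonic extension propagate this decay to $\overline{\R^{2m+1}_+}$. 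Writing $f(u_1)-f(u_2)=c(x)\,v$ with $c(x) := \int_0^1 f'\bpar{\tau u_1 + (1-\tau)u_2}\,\d\tau \in L^{\infty}(\R^{2m})$, the function $v$ solves the linear problem
$$
\div(\lambda^a \nabla v) = 0 \ \text{ in } \R^{2m+1}_+, \qquad d_\s\,\frac{\partial v}{\partial \nu^a} = c(x)\,v \ \text{ on } \R^{2m}.
$$

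By antisymmetry across $\ccal$ it suffices to show $v \leq 0$ on $\overline{\ocal}\times[0,+\infty)$; the reverse inequality then follows by swapping $u_1$ and $u_2$. I would proceed by contradiction: assume $\sup v = M > 0$. Because $v$ vanishes on $\ccal$ and decays at infinity, $M$ is attained at some $(x_0,\lambda_0)$ with $x_0 \in \ocal$. If $\lambda_0>0$, the strong maximum principle for $\div(\lambda^a\nabla\cdot)$ forces $v$ to be constant on the connected component of $\R^{2m+1}_+$, contradicting the decay at infinity; hence $\lambda_0=0$. The Hopf lemma for the degenerate elliptic operator then yields $\partial v/\partial\nu^a(x_0,0)<0$, while the boundary condition requires $c(x_0)\,M \geq 0$, so $c(x_0)<0$ at the contact point.

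The crux of the proof is that $c$ is not pointwise signed: near $\ccal$, where the $u_i$ are small, $c$ is close to $f'(0)>0$, so the pointwise analysis above does not by itself close the argument. To circumvent this I would invoke the maximum principle for doubly radial functions odd with respect to $\ccal$ announced in the introduction, which exploits the Dirichlet condition on $\ccal$ inherited from oddness to absorb the positive part of $c$ concentrated in a tubular neighborhood of the cone. The complementary ingredient is the strict inequality $u_i < U$ in $\ocal$ provided by Proposition~\ref{Prop:SaddleUnderLayer}, which furnishes a positive supersolution $\varphi$ of the linearized problem, built from the stable one-dimensional layer $u_0$ and lying strictly above each $u_i$, against which the ratio $v/\varphi$ can be compared via a Phragmén--Lindelöf-type argument. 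The hardest step, in my view, is establishing this maximum principle on the unbounded domain $\ocal\times(0,+\infty)$ in the presence of a mixed-sign boundary coefficient; I expect the proof to combine the uniform decay of $v$ in $(x,\lambda)$, the homogeneous Dirichlet condition on $\ccal$ from oddness, and the strict upper barrier $u_i<U$ to rule out every nontrivial doubly radial odd bounded solution of the linearized problem, yielding $v\equiv 0$ and hence $u_1\equiv u_2$.
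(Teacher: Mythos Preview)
Your setup matches the paper's exactly, but you miss the two simplifications that make the argument close cleanly.

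First, instead of writing $f(u_1)-f(u_2)=c(x)\,v$ with an averaged coefficient, the paper uses the \emph{concavity} of $f$ on $(0,1)$ directly: since both $u_i\in(0,1)$ in $\ocal$,
\[
d_\s\,\frac{\partial v}{\partial\nu^a}=f(u_1)-f(u_2)\le f'(u_2)\,(u_1-u_2)=f'(u_2)\,v\quad\text{on }\ocal.
\]
This inequality places $v$ under the linearized operator $\mathscr{L}_{u_2}$ at the specific saddle solution $u_2$, which is exactly the hypothesis of Proposition~\ref{Prop:MaxPrincipleLinearizedOperator}. No integral-averaged $c(x)$ with uncontrolled sign ever appears.

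Second, the positive supersolution that drives Proposition~\ref{Prop:MaxPrincipleLinearizedOperator} is $u_2$ itself: by concavity again, $\mathscr{L}_{u_2}u_2=f(u_2)-f'(u_2)\,u_2>0$ in $\ocal$. You do not need $U$ or Proposition~\ref{Prop:SaddleUnderLayer}; in the paper that proposition is proved \emph{after} uniqueness and plays no role in it. The region near $\ccal$, where $f'(u_2)>0$ and a naive maximum principle would fail, is handled not by a barrier inherited from $U$ but by the ``extension-narrow'' maximum principle (Proposition~\ref{Prop:MaxPrincipleNarrow} and Corollary~\ref{Cor:MaxPrincipleNarrowSaddle}), which absorbs the bad sign of $f'(u_2)/d_\s$ in a thin tubular neighborhood of the cone; away from that strip one compares $v$ to $u_2$ via $w=v/u_2$.

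So the gap is concrete: your third paragraph is a plan rather than a proof, and it points toward the wrong supersolution. Replace the averaged $c(x)$ by the concavity inequality, take $u_2$ as the positive supersolution in $\ocal$, and invoke Proposition~\ref{Prop:MaxPrincipleLinearizedOperator} directly; then swap $u_1$ and $u_2$.
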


As in the paper of Cabr\'e \cite{Cabre-Saddle} for the classical case, the proof of the uniqueness result follows from the asymptotic behavior of the saddle solution (point (ii) in Theorem~\ref{Th:Summary}) and a maximum principle in $\ocal$ for the linearized operator at a saddle-shaped solution. The maximum principle is the following.

\begin{proposition}
	\label{Prop:MaxPrincipleLinearizedOperator}
	Let $u$ be a saddle-shaped solution of \eqref{Eq:AllenCahnWithExtension}. 
	Let $\Omega \subset \ocal \times (0,+\infty) \subset \R^{2m+1}_+$ be an open set such that $\partial_0 \Omega$ is nonempty. Let $v \in C^2 (\Omega)\cap C(\overline{\Omega})$ be bounded from above and such that $\lambda^a v_\lambda \in C(\overline{\Omega})$. 
	
	Consider the operator $\mathscr{L}_u $ defined by 
	\begin{equation}
	\label{Eq:LinearizedOperator}
	\mathscr{L}_u v := d_\s \dfrac{\partial v}{\partial \nu^a}  -f'(u) v \ \ \text{ on } \ \partial_0 \Omega \subset \R^{2m}\times\{0\},
	\end{equation}
	and assume that
	$$
	\beqc{\PDEsystem}
	-\div(\lambda^a \nabla v) &\leq& b(x,\lambda) v & \text{ in } \Omega \subset \ocal \times (0,+\infty)\,, \\
	\mathscr{L}_u v &\leq & 0 & \text{ on } \partial_0 \Omega \subset \ocal \,, \\
	v & \leq & 0 & \text{ on } \partial_L \Omega \,,\\
	\ds \limsup_{x\in \partial_0 \Omega, \  |x|\to +\infty} v(x,0) & \leq & 0\,,
	\eeqc
	$$
	with $b \leq 0$. Then, $v\leq 0$ in $\Omega$.
\end{proposition}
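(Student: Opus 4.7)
The strategy is adapted from Cabré's argument in \cite{Cabre-Saddle} for the classical case: the concavity of $f$ makes the saddle solution $u$ itself a positive supersolution of the linearized problem on $\ocal\times[0,\infty)$, and the conclusion follows from a weighted-energy argument applied to the quotient $v/u$. As a preliminary, I would check that $u>0$ throughout $\ocal\times[0,\infty)$. On the trace this is part of the definition of a saddle-shaped solution; in the open half-space it is a consequence of the strong maximum principle for $\div(\lambda^a\nabla\cdot)$ in the cylinder $\ocal\times(0,\infty)$, using that $u$ vanishes on $\ccal\times[0,\infty)$ (by oddness of the extension), is strictly positive with nonnegative conormal derivative $d_\s\,\partial u/\partial\nu^a=f(u)\ge 0$ on $\ocal\times\{0\}$, and converges at infinity to $U>0$ (the positivity of $U$ being established in the same way for $u_0$ on a half-plane). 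Since $f$ is concave on $(0,1)$ with $f(0)=0$, one has $f(u)\ge u f'(u)$ on $[0,1)$, hence
\[
\mathscr{L}_u u \,=\, d_\s\,\frac{\partial u}{\partial\nu^a}\,-\,f'(u)\,u \,=\, f(u)\,-\,u\,f'(u)\,\ge\,0 \quad\text{on }\partial_0\Omega,
\]
while $\div(\lambda^a\nabla u)=0$ in $\Omega$: the saddle solution is a positive supersolution.

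I would then set $\psi := v/u$ in $\Omega$. Using that $u$ is $\lambda^a$-harmonic, $\div(\lambda^a u^2\,\nabla\psi)=u\,\div(\lambda^a\nabla v)-v\,\div(\lambda^a\nabla u)=u\,\div(\lambda^a\nabla v)$, so the hypothesis on $v$ yields
\[
-\div(\lambda^a u^2\,\nabla\psi)\,\le\, b\,u^2\,\psi \quad\text{in }\Omega.
\]
At the boundary, $-\lim_{\lambda\downarrow 0}\lambda^a u^2\,\psi_\lambda \,=\, u\,\partial v/\partial\nu^a\,-\,v\,\partial u/\partial\nu^a$, and combining $\mathscr{L}_u v\le 0$ with $d_\s\,\partial u/\partial\nu^a=f(u)$ produces
\[
-\lim_{\lambda\downarrow 0}\lambda^a u^2\,\psi_\lambda \,\le\, \frac{u\,\bigl(u f'(u)\,-\,f(u)\bigr)}{d_\s}\,\psi \quad\text{on }\partial_0\Omega.
\]
Both coefficients multiplying $\psi$ are nonpositive (the bulk one by $b\le 0$, the boundary one by concavity of $f$). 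The pointwise conditions on $v$ force $\psi^+$ to vanish on $\partial_L\Omega$ — on the portion touching $\ccal\times[0,\infty)$, where $u\to 0$, the inequality $v\le 0$ is enough to extend $\psi^+$ continuously by $0$ — and $\limsup\psi\le 0$ holds at infinity along $\partial_0\Omega$.

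The conclusion is obtained from a weighted-energy argument. A convenient device is to work with $v_\varepsilon:=v-\varepsilon\,u$ for small $\varepsilon>0$; since $b\le 0$, $u\ge 0$, and $\mathscr{L}_u u\ge 0$, the function $v_\varepsilon$ satisfies the same system of inequalities, now with a strictly negative trace $-\varepsilon u$ on $\partial_L\Omega$. Multiplying the inequality for $\psi_\varepsilon:=v_\varepsilon/u$ by $\eta_R^2\,\psi_\varepsilon^+$, with $\eta_R$ a smooth cutoff equal to $1$ on $B_R^+$ and supported in $B_{2R}^+$, and integrating by parts, the nonpositive zero-order bulk and boundary terms may be dropped to yield
\[
\int_\Omega\lambda^a u^2\,\eta_R^2\,|\nabla\psi_\varepsilon^+|^2\,\d x\,\d\lambda \,\le\, C\int_\Omega\lambda^a u^2\,|\nabla\eta_R|^2\,(\psi_\varepsilon^+)^2\,\d x\,\d\lambda.
\]
An appropriate choice of $\eta_R$, together with the strict negative sign of $v_\varepsilon$ at infinity and the asymptotic $u\to U$, forces the right-hand side to vanish as $R\to\infty$; this gives $\nabla\psi_\varepsilon^+\equiv 0$ and hence, in view of the boundary data, $\psi_\varepsilon^+\equiv 0$. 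Sending $\varepsilon\downarrow 0$ produces $v\le 0$ in $\Omega$.

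The main technical obstacle is running the energy argument in an unbounded domain: the hypothesis $\limsup v\le 0$ is given only along the trace $\partial_0\Omega$, so decay of $\psi_\varepsilon^+$ inside the bulk must be extracted from the asymptotic behavior of $u$ from Theorem~\ref{Th:Summary}(ii) and from information propagating from the boundary trace, and this decay must be fast enough to absorb the growth of $\int\lambda^a u^2\,|\nabla\eta_R|^2$ as $R\to\infty$. Handling simultaneously the degeneration of the supersolution $u$ on the portion of $\partial_L\Omega$ lying on $\ccal\times[0,\infty)$ is the other delicate point.
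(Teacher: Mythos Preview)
Your proposal has a genuine gap at exactly the place you flag as the ``main technical obstacle''. The energy inequality you obtain,
\[
\int_\Omega\lambda^a u^2\,\eta_R^2\,|\nabla\psi_\varepsilon^+|^2 \,\le\, C\int_\Omega\lambda^a\,|\nabla\eta_R|^2\,(v_\varepsilon^+)^2,
\]
does not close: the hypothesis $\limsup v\le 0$ is imposed only on $\partial_0\Omega$, while in the bulk $v$ is merely bounded above. Hence $(v_\varepsilon^+)^2$ need not vanish (or even decay) on the annulus $B_{2R}^+\setminus B_R^+$ for large $\lambda$, and the right-hand side is of order $R^{-2}\cdot R^{2m+2-2\s}=R^{2m-2\s}\to\infty$. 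Nothing in the assumptions---nor the asymptotics of $u$ in Theorem~\ref{Th:Summary}(ii), which concern only $\{\lambda=0\}$---provides the required decay of $v_\varepsilon^+$ inside $\R^{2m+1}_+$. A second, related issue is the degeneration of $u$ along the portion of $\partial_L\Omega$ on $\ccal\times[0,\infty)$: there $\psi_\varepsilon=v_\varepsilon/u$ may blow up even when $v_\varepsilon\le 0$, and the formal computation $\div(\lambda^a u^2\nabla\psi_\varepsilon)=u\,\div(\lambda^a\nabla v_\varepsilon)$, together with the integration by parts against $\eta_R^2\psi_\varepsilon^+$, requires justification that you have not supplied.

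The paper avoids both difficulties by abandoning the energy route in favor of a pointwise argument built on a maximum principle in ``extension-narrow'' pairs (Proposition~\ref{Prop:MaxPrincipleNarrow} and Corollary~\ref{Cor:MaxPrincipleNarrowSaddle}). One splits $\partial_0\Omega$ into an $\varepsilon$-neighborhood $\ncal_\varepsilon$ of $\ccal$ and its complement $\ocal_\varepsilon$. On $\ocal_\varepsilon$, the asymptotic behavior yields $u\ge\delta>0$, so $S:=\sup_{\partial_0\Omega\cap\ocal_\varepsilon}v/u$ is finite; assuming $S>0$, the narrow maximum principle applied to $v-Su$ on the pair $(\Omega,\partial_0\Omega\cap\ncal_\varepsilon)$ upgrades this to $S=\sup_\Omega v/u$. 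A Phragm\'en--Lindel\"of step (Step~1 of Proposition~\ref{Prop:MaxPrincipleNarrow}) forces the supremum to be attained on $\partial_0\Omega$, and a Hopf-type argument at the maximum point contradicts $\mathscr{L}_u u>0$. The narrow-set maximum principle is precisely the missing ingredient that handles the degeneration of $u$ near $\ccal$ and the lack of information on $v$ in the bulk at infinity; your energy scheme does not have a substitute for it.
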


To establish the previous maximum principle we follow the proof of the analogous result for the local case ($\s = 1$) in \cite{Cabre-Saddle}. It involves a maximum principle in ``narrow'' sets (see also \cite{Cabre-Topics,BerestyckiNirembergVaradhan}). The main difference between our proof and the one in \cite{Cabre-Saddle} is that, since we are using the extension problem, a new notion of narrowness is needed to carry out the same type of arguments (see Section~\ref{Sec:MaximumPrinciple} for the details).

The second main result of this paper is the following pointwise estimate for the saddle-shaped solution. We prove that the function $U(s,t,\lambda) := u_0 ( (s-t)/\sqrt{2}, \lambda)$ is a barrier for the saddle-shaped solution. This result was established in the local setting ($\s = 1$) in \cite{CabreTerraI}, but in such case the proof is quite simple by using the so-called Modica estimate (see \cite{CabreTerraI} for the details). In the fractional framework, this estimate is only available (in a nonlocal form) in dimension 1 (see \cite{CabreSolaMorales, CabreSireI}) and therefore we need another type of argument. Our strategy is to use a maximum principle for the linearized operator at $U$, similar to the one in Proposition~\ref{Prop:MaxPrincipleLinearizedOperator}. The pointwise estimate we establish is the following.

\begin{proposition}
	\label{Prop:SaddleUnderLayer}
	Let $u$ be the saddle-shaped solution of \eqref{Eq:AllenCahnWithExtension},  let $u_0$ be the layer solution given by \eqref{Eq:LayerSolution} and let $U$ be defined by \eqref{Eq:DefULayer}. Then, 
	\begin{equation}
	\label{Eq:SaddleUnderLayer}
	|u(x,\lambda)| \leq |U(x,\lambda)| = |u_0 ( \dist(x, \ccal), \lambda) | \quad \text{ for every } (x,\lambda)\in \overline{\R^{2m+1}_+}\,.
	\end{equation}
\end{proposition}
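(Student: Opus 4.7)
The plan is to compare $v := u - U$ in the domain $\Omega = \ocal \times (0,+\infty)$ and apply a maximum principle in the spirit of Proposition~\ref{Prop:MaxPrincipleLinearizedOperator}, but for the operator linearized at $U$ rather than at $u$. Once the inequality $u \leq U$ is established in $\ocal$, the estimate $|u|\leq |U|$ in all of $\R^{2m+1}_+$ follows from the oddness of both functions with respect to the Simons cone (which also gives $v\equiv 0$ on $\ccal\times[0,+\infty)$).

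First I would verify that $U$ is a supersolution of the extension equation in $\ocal\times(0,+\infty)$. Writing $U(x,\lambda)=u_0(z,\lambda)$ with $z=(s-t)/\sqrt 2$ and using that $u_0$ solves the 1D extension problem, a direct computation using the doubly radial form of the Laplacian in the $(s,t)$ variables gives
\[
\div(\lambda^a\nabla U) \;=\; \lambda^a\,\frac{m-1}{\sqrt 2}\Bigl(\frac{1}{s}-\frac{1}{t}\Bigr)(u_0)_z,
\]
which is nonpositive in $\ocal=\{s>t\}$ since $(u_0)_z>0$ by \eqref{Eq:LayerSolution}. Hence $-\div(\lambda^a\nabla v)=\div(\lambda^a\nabla U)\leq 0$ in $\Omega$, so the interior hypothesis of the maximum principle holds with $b\equiv 0$.

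Next I would handle the boundary condition. Using $d_\s\partial_{\nu^a}u=f(u)$ and $d_\s\partial_{\nu^a}U=f(u_0)=f(U)$, the linearized operator at $U$ acting on $v$ reads
\[
\mathscr{L}_U v \;=\; d_\s\frac{\partial v}{\partial \nu^a}-f'(U)\,v \;=\; f(u)-f(U)-f'(U)(u-U) \;=\; g(u),
\]
where $g(t):=f(t)-f(U)-f'(U)(t-U)$. Since $g(U)=g'(U)=0$ and $g''=f''<0$ on $(0,1)$, the function $g$ is concave on $(0,1)$ with maximum at $U$, so $g(u)\leq 0$ throughout $\ocal\cap\{\lambda=0\}$ (using that $0<u<1$ and $0<U<1$ there). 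Combined with $v=0$ on $\ccal\times[0,+\infty)\subset\partial_L\Omega$ and the asymptotic $\|u-U\|_{L^\infty(\R^{2m}\setminus B_R)}\to 0$ from Theorem~\ref{Th:Summary}(ii), all the hypotheses required by the maximum principle are satisfied.

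The final step is to invoke a version of Proposition~\ref{Prop:MaxPrincipleLinearizedOperator} with $u$ replaced by $U$, concluding $v\leq 0$ in $\ocal\times(0,+\infty)$. The main obstacle here is justifying this analogue: the proof of Proposition~\ref{Prop:MaxPrincipleLinearizedOperator} relies on $u$ being a positive supersolution of $\mathscr{L}_u \varphi = 0$ in $\ocal$, which enables an $h$-transform and a narrow-domain argument. Fortunately the same structural property holds for $U$, since by the concavity of $f$ on $(0,1)$ and $f(0)=0$,
\[
\mathscr{L}_U U \;=\; f(U)-f'(U)\,U \;>\;0 \quad \text{in } \ocal,
\]
so $U$ itself plays the role of the positive supersolution. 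This allows the proof of Proposition~\ref{Prop:MaxPrincipleLinearizedOperator} to be adapted essentially verbatim. Once $u\leq U$ is obtained in $\ocal$, the odd symmetry $u(s,t,\lambda)=-u(t,s,\lambda)$ and $U(s,t,\lambda)=-U(t,s,\lambda)$ yields $-U\leq u$ in $\{s<t\}$, hence \eqref{Eq:SaddleUnderLayer} in all of $\overline{\R^{2m+1}_+}$ (using Lemma~4.2 of \cite{CabreTerraI} to identify $|z|=\dist(x,\ccal)$).
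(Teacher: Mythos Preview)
Your approach matches the paper's: set $v=u-U$ in $\Omega=\ocal\times(0,+\infty)$, verify the differential inequalities, and rerun the argument of Proposition~\ref{Prop:MaxPrincipleLinearizedOperator} with $U$ playing the role of the positive supersolution of $\mathscr{L}_U$. The structural observation $\mathscr{L}_U U = f(U)-f'(U)U>0$ is exactly what the paper uses, and the boundary, lateral, and asymptotic conditions are handled identically.

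There is, however, one technical point you gloss over. Your formula for $\div(\lambda^a\nabla U)$ contains $1/t$, so it is only valid in $\Omega\setminus\{t=0,\ \lambda>0\}$, not in all of $\Omega$. In fact $U(s,t,\lambda)=u_0\bigl((s-t)/\sqrt 2,\lambda\bigr)$ is merely Lipschitz, not $C^2$, across $\{t=0,\ \lambda\geq 0\}$, since $t=|x''|$ is not smooth at the origin of $\R^m$. Hence $v=u-U\notin C^2(\Omega)$, and Proposition~\ref{Prop:MaxPrincipleLinearizedOperator} as stated---which requires $v\in C^2(\Omega)$---does not apply verbatim. The paper addresses this via Remark~\ref{Remark:WeakMPNarrow}: the maximum principle in extension-narrow pairs still holds provided the subsolution cannot have a local maximum at its nonregular points. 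One then checks that along $\{t=0\}$ the graph of $-U$ (hence of $v$) makes a downward corner in the $x_{m+1}$-direction, since
\[
\lim_{\tau\to 0^\pm}\partial_{x_{m+1}} U(x_1,\dots,x_m,\tau,0,\dots,0,\lambda)=\mp\tfrac{1}{\sqrt2}\,\partial_x u_0\bigl(\tfrac{s}{\sqrt 2},\lambda\bigr),
\]
so $v$ cannot attain an interior local maximum there. With this addendum your argument goes through and coincides with the paper's proof.
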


The third main result of the present paper establishes the stability of the saddle solution in high dimensions. This is an extension of Theorem~1.4 in \cite{Cabre-Saddle} to the nonlocal case. For its proof, it is crucial to use the extension problem.

\begin{theorem}
	\label{Thm:Stability}
	Assume that $f$ satisfies \eqref{Eq:fHypotheses}. If $2m\geq 14$, then the saddle-shaped solution $u$ of \eqref{Eq:AllenCahnWithExtension} is stable in $\R^{2m+1}_+$, i.e., \eqref{Eq:StabilityCondition} holds. 
	
	Its stability is a consequence of the following fact. For every constant $b>0$ satisfying $b(b-m+2)\leq -(m-1)$, the function
	$$
	\varphi := t^{-b} \, u_s - s^{-b} u_t, 
	$$
	defined in $\R^{2m+1}_+\setminus\{st=0\}$, is even with respect to the Simons cone and is a positive supersolution of the linearized operator. More precisely, $-\div(\lambda^a \nabla \varphi) \geq 0$ in $\R^{2m+1}_+\setminus\{st=0\}$ and $\mathscr{L}_u \varphi \geq 0$ in $\R^{2m}\setminus\{st=0\}$, where $\mathscr{L}_u$ is defined in \eqref{Eq:LinearizedOperator}.
\end{theorem}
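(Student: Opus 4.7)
The plan is to follow Cabré's strategy from the local case \cite{Cabre-Saddle}, adapted to the extension framework. The argument naturally splits into two parts: (A) a Picone-type reduction showing that a positive supersolution of the linearized operator yields the stability inequality \eqref{Eq:StabilityCondition}, and (B) the direct verification that $\varphi = t^{-b}u_s - s^{-b}u_t$ satisfies the required properties. The dimension threshold $2m\geq 14$ emerges from requiring $b^2 - (m-2)b + (m-1) \leq 0$ to admit a real (hence positive) root, equivalently $(m-2)^2 \geq 4(m-1)$, first satisfied at $m=7$.

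For part (A), given $\xi \in C_c^\infty(\overline{\R^{2m+1}_+})$ I would set $\eta := \xi/\varphi$ and expand $|\nabla\xi|^2 = |\varphi\nabla\eta + \eta\nabla\varphi|^2$. After rewriting the crossed term via $\div(\eta^2\lambda^a\varphi\nabla\varphi) = \nabla(\eta^2)\cdot(\lambda^a\varphi\nabla\varphi) + \eta^2\lambda^a|\nabla\varphi|^2 + \eta^2\varphi\div(\lambda^a\nabla\varphi)$ and integrating, the bulk inequality $-\div(\lambda^a\nabla\varphi)\geq 0$ and the boundary inequality $d_\s\partial_{\nu^a}\varphi \geq f'(u)\varphi$ combine to produce $d_\s\int\lambda^a|\nabla\xi|^2 \geq \int f'(u)\xi^2$, i.e.\ \eqref{Eq:StabilityCondition}. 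A cutoff near the singular set $\{st=0\}$ (where $\varphi$ may blow up through the factors $t^{-b}, s^{-b}$) justifies the integration by parts.

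For part (B), evenness of $\varphi$ follows by differentiating $u(s,t,\lambda) = -u(t,s,\lambda)$ to obtain $u_s(s,t,\lambda) = -u_t(t,s,\lambda)$ and substituting directly into $\varphi(t,s,\lambda)$; positivity in $\ocal\cap\{st>0\}$ comes from the monotonicities $u_s>0$, $u_t<0$ established in \cite{Cinti-Saddle2}, extending by evenness and continuity to the remainder of $\R^{2m+1}_+\setminus\{st=0\}$ (noting that on the cone $\varphi|_{s=t} = 2s^{-b}u_s>0$). The Neumann supersolution condition is the cleanest step: differentiating $d_\s\partial_{\nu^a}u = f(u)$ in $s$ and in $t$ yields $d_\s\partial_{\nu^a}(u_s) = f'(u)u_s$ and similarly for $u_t$, and since $t^{-b}, s^{-b}$ are $\lambda$-independent, one obtains $d_\s\partial_{\nu^a}\varphi = f'(u)\varphi$, i.e., $\mathscr{L}_u\varphi = 0$ on $\R^{2m}\setminus\{st=0\}$.

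The heart of the argument is the bulk inequality $\mathcal{M}\varphi \leq 0$, where $\mathcal{M} := \lambda^{-a}\div(\lambda^a\nabla\cdot)$ reads on doubly radial functions as $\mathcal{M}v = v_{ss} + \frac{m-1}{s}v_s + v_{tt} + \frac{m-1}{t}v_t + v_{\lambda\lambda} + \frac{a}{\lambda}v_\lambda$. Differentiating $\mathcal{M}u = 0$ once gives $\mathcal{M}(u_s) = \frac{m-1}{s^2}u_s$ and $\mathcal{M}(u_t) = \frac{m-1}{t^2}u_t$, and a direct computation yields $\mathcal{M}(t^{-b}) = b(b-m+2)\,t^{-b-2}$. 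Combining via the product rule $\mathcal{M}(fg) = f\mathcal{M}g + g\mathcal{M}f + 2(f_sg_s+f_tg_t+f_\lambda g_\lambda)$ (valid for doubly radial $f,g$), and setting $C := b(b-m+2)$, one obtains
\begin{equation*}
\mathcal{M}\varphi = t^{-b-2}\!\left[(m-1)\tfrac{t^2}{s^2}+C\right]\!u_s - s^{-b-2}\!\left[(m-1)\tfrac{s^2}{t^2}+C\right]\!u_t - 2b(t^{-b-1}-s^{-b-1})u_{st}.
\end{equation*}
Under $C\leq-(m-1)$ the first bracket is strictly negative in $\ocal$, but the second bracket may change sign, so showing $\mathcal{M}\varphi\leq 0$ requires an algebraic rearrangement that merges the three contributions. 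The manipulation must exploit the full strength of $C\leq -(m-1)$ together with qualitative properties of the saddle solution — the oddness-induced symmetry linking $u_s$ and $u_t$, the layer-type asymptotic $u_s\approx -u_t$ coming from Theorem~\ref{Th:Summary}(ii), and the pointwise bound of Proposition~\ref{Prop:SaddleUnderLayer}. This sign analysis is the main obstacle; once $\mathcal{M}\varphi\leq 0$ is established in $\ocal$, evenness extends it to all of $\R^{2m+1}_+\setminus\{st=0\}$.
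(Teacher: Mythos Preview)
Your structural outline is correct and matches the paper's approach: the Picone-type reduction in part~(A), the verification of evenness, positivity, and the Neumann identity $\mathscr{L}_u\varphi=0$ in part~(B), and the formula you derive for $\mathcal{M}\varphi$ are all right (your $-2b$ sign on the $u_{st}$ term is in fact the correct one). The gap is in the final step, where you stop short and speculate about the wrong tools.

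The sign analysis does \emph{not} use any asymptotic information from Theorem~\ref{Th:Summary}(ii) or the barrier of Proposition~\ref{Prop:SaddleUnderLayer}; those play no role here. What you are missing are the sign facts $u_y>0$ and $u_{st}>0$ from Proposition~\ref{Prop:MonotonicityProperties} (the latter being new to this paper), combined with a purely algebraic regrouping. Concretely, in $\{s>t>0\}$ the term $-2b(t^{-b-1}-s^{-b-1})u_{st}$ is nonpositive by $u_{st}>0$ and can simply be dropped. Then write $u_s=(u_s+u_t)-u_t=\sqrt{2}\,u_y-u_t$ and regroup the remaining expression as
\[
\sqrt{2}\,t^{-b}u_y\bigl[(m-1)s^{-2}+C\,t^{-2}\bigr]
+(-u_t)\Bigl\{(m-1)\bigl(t^{-b}s^{-2}+s^{-b}t^{-2}\bigr)+C\bigl(t^{-b-2}+s^{-b-2}\bigr)\Bigr\}.
\]
Since $C\leq-(m-1)$ and $u_y>0$, the first summand is at most $\sqrt{2}(m-1)t^{-b}u_y(s^{-2}-t^{-2})\leq 0$. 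For the second, the same inequality $C\leq-(m-1)$ together with $-u_t>0$ bounds the brace by
\[
(m-1)\bigl(t^{-b}s^{-2}+s^{-b}t^{-2}-t^{-b-2}-s^{-b-2}\bigr)
=(m-1)(s^{-b}-t^{-b})(t^{-2}-s^{-2})\leq 0.
\]
Thus $\mathcal{M}\varphi\leq 0$ follows from three pointwise signs ($u_y>0$, $-u_t>0$, $u_{st}>0$) and elementary algebra; no quantitative estimates enter. Your intuition that ``$u_s\approx -u_t$'' is relevant is morally right, but the precise mechanism is the strict inequality $u_s+u_t=\sqrt{2}\,u_y>0$ in $\ocal$, not an asymptotic approximation.
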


An important consequence of this result is Corollary~\ref{Cor:SimonsConeStableDim14}, stated next, on the stability of the Simons cone as a $(2\s)$-minimal surface in dimensions $2m\geq 14$. This is the first analytical proof of its stability for some $\s$ and $m$. It follows directly from the convergence results proved in \cite{CabreCintiSerra-Stable} for stable solutions to the Allen-Cahn equation after a blow-down, together with the preservation of the stability along this procedure (see also the comments at the end of this introduction).

\begin{corollary}
	\label{Cor:SimonsConeStableDim14}
	Let $\s \in (0,1/2)$ and $2m\geq 14$. Then, the Simons cone $\ccal \subset \R^{2m}$ is a stable $(2\s)$-minimal surface.
\end{corollary}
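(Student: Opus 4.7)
The plan is to deduce the stability of the Simons cone $\ccal$ as a $(2\s)$-minimal cone by combining the stability of the saddle-shaped solution established in Theorem~\ref{Thm:Stability} with the blow-down convergence for stable solutions of the fractional Allen-Cahn equation proved in \cite{CabreCintiSerra-Stable}.

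First, by Theorem~\ref{Thm:Uniqueness} and Theorem~\ref{Thm:Stability}, for every $2m\geq 14$ there exists a unique saddle-shaped solution $u$ of \eqref{Eq:AllenCahnWithExtension} which is stable, i.e., satisfies \eqref{Eq:StabilityCondition}. I would then consider the rescaled family $u_R(x,\lambda):=u(Rx,R\lambda)$ for $R>0$. A direct change of variables in \eqref{Eq:StabilityCondition} shows that each $u_R$ is bounded, solves the extension problem associated to the Allen-Cahn equation with rescaled nonlinearity $R^{2\s}f(\cdot)$, and inherits the stability property, with a uniform bound on the rescaled energy on balls.

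Next, I would invoke the blow-down convergence result of \cite{CabreCintiSerra-Stable}: since $\s\in(0,1/2)$, along a subsequence $R_k\to\infty$ the trace of $u_{R_k}$ on $\{\lambda=0\}$ converges in $L^1_{\loc}(\R^{2m})$ to a function of the form $\chi_E-\chi_{E^c}$, where $E\subset\R^{2m}$ is a set whose boundary $\partial E$ is a stable $(2\s)$-minimal cone. To identify $E$, I would use the asymptotic behavior of $u$ at infinity given by Theorem~\ref{Th:Summary}(ii): outside large balls, the trace of $u$ is close in $L^\infty$ to the function $U(x)=u_0((s-t)/\sqrt{2})$. Since $u_0$ is a monotone layer connecting $-1$ and $+1$, a rescaling argument yields $U(Rx,0)\to \sign(s-t)$ pointwise away from $\ccal$ as $R\to\infty$, and hence in $L^1_{\loc}$ by dominated convergence. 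Combined with the $L^\infty$ asymptotics, this forces the full family $u_R(\cdot,0)$ (not merely a subsequence) to converge to $\chi_{\ocal}-\chi_{\ocal^c}$, so $E=\ocal$ and $\partial E=\ccal$.

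The main obstacle I anticipate is precisely the identification of the blow-down limit. The abstract convergence theorem in \cite{CabreCintiSerra-Stable} guarantees only that \emph{some} stable $(2\s)$-minimal cone arises as a limit, while we must show that it is the Simons cone itself; this requires upgrading the $L^\infty$ asymptotics of saddle-shaped solutions from Theorem~\ref{Th:Summary}(ii) to an $L^1_{\loc}$ statement for the rescaled family, and using the positivity of $u$ in $\ocal$ to rule out trivial limits such as $\pm 1$. Once this identification is in place, the stability of $\ccal$ follows at once from the preservation of the stability inequality along the blow-down proved in \cite{CabreCintiSerra-Stable}, completing the proof of the corollary.
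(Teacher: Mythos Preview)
Your proposal is correct and follows essentially the same route as the paper: apply Theorem~\ref{Thm:Stability} to obtain a stable saddle-shaped solution, invoke the blow-down convergence result of \cite{CabreCintiSerra-Stable} for stable solutions when $\s\in(0,1/2)$ to get a stable $(2\s)$-minimal cone as limit, and then identify this cone with $\ccal$ via the asymptotic behavior in Theorem~\ref{Th:Summary}(ii). The paper carries out the identification step more briefly (pointwise a.e.\ convergence of $u_k$ to $\chi_{\ocal}-\chi_{\ocal^c}$ combined with the $L^1$ subsequential limit), but your more detailed discussion of this point is entirely in the same spirit.
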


The key ingredients to prove Theorem~\ref{Thm:Stability} are some monotonicity and second derivative properties for the saddle-shaped solution. In fact, $\varphi$ being a positive supersolution will follow from such properties. More precisely, our arguments will use the following.

\begin{proposition}
	\label{Prop:MonotonicityProperties}
	Let $u$ be the saddle-shaped solution to \eqref{Eq:AllenCahnWithExtension}. Then,
	\begin{enumerate}[label=(\roman{*})]
		\item $u_y > 0$ in $\ocal \times [0, +\infty)$\,.
		\item $-u_t > 0$ in $(\ocal \setminus \{ t= 0\}) \times [0,+\infty)$.
		\item $u_{st} > 0$ in $(\ocal\setminus \{ t = 0\})\times [0,+\infty)$.
	\end{enumerate}
	As a consequence, for every direction $\partial_\eta = \alpha \partial_y - \beta \partial_t$, where $\alpha$ and $\beta$ are nonnegative constants, $\partial_\eta u > 0$ in $ \{s > t > 0,\ \lambda \geq 0\}$.
\end{proposition}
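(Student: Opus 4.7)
The strategy is to apply the maximum principle in Proposition~\ref{Prop:MaxPrincipleLinearizedOperator} to each of the derivatives in question, after identifying the linearized PDE they satisfy. Writing the equation $\div(\lambda^a\nabla u)=0$ in the doubly radial variables $(s,t,\lambda)$ and denoting by
\[
L := \partial_{ss} + \tfrac{m-1}{s}\partial_s + \partial_{tt} + \tfrac{m-1}{t}\partial_t + \partial_{\lambda\lambda} + \tfrac{a}{\lambda}\partial_\lambda
\]
the doubly radial form of $\lambda^{-a}\div(\lambda^a\nabla\,\cdot)$, differentiating in $s$ and in $t$ (and using the commutator $[\partial_t,L]=-\tfrac{m-1}{t^2}\partial_t$) yields the \emph{closed} equations $L u_s=\tfrac{m-1}{s^2}u_s$, $L u_t=\tfrac{m-1}{t^2}u_t$, and $L u_{st}=(m-1)\bigl(\tfrac{1}{s^2}+\tfrac{1}{t^2}\bigr)u_{st}$. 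The derivative $u_y=(u_s+u_t)/\sqrt{2}$, by contrast, satisfies a \emph{coupled} equation
\[
L u_y = \tfrac{m-1}{2}\bigl[(\tfrac{1}{s^2}+\tfrac{1}{t^2})u_y + (\tfrac{1}{s^2}-\tfrac{1}{t^2})u_z\bigr], \qquad u_z=(u_s-u_t)/\sqrt{2},
\]
which becomes tractable only once the sign of $u_z$ is known. Linearizing $d_\s\partial_{\nu^a}u=f(u)$ gives $\mathscr{L}_u u_s=\mathscr{L}_u u_t=\mathscr{L}_u u_y=0$ on $\partial_0\R^{2m+1}_+$, while $\mathscr{L}_u u_{st}=f''(u)\,u_s u_t$.

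I would first establish (ii), by applying Proposition~\ref{Prop:MaxPrincipleLinearizedOperator} to $v=u_t$ in $\Omega:=(\ocal\setminus\{t=0\})\times(0,+\infty)$: the interior equation $-\div(\lambda^a\nabla v)=-\lambda^a(m-1)/t^2\cdot v$ fits the required form with $b\le 0$. On the lateral boundary: differentiating $u(s,t)=-u(t,s)$ gives $u_s+u_t\equiv 0$ on $\ccal$, so $u_t=-u_s=-u_z/\sqrt{2}<0$ there, by Hopf's boundary point lemma applied to $u\ge 0$ in $\ocal$ with $u\equiv 0$ on $\ccal$; on $\{t=0,s>0\}$ the doubly radial smoothness of $u$ in $x''$ forces $u_t\equiv 0$. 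The asymptotic condition $\limsup_{|x|\to\infty}u_t(x,0)\le 0$ follows from Theorem~\ref{Th:Summary}(ii) together with interior elliptic estimates (which upgrade $C^0$ to $C^k$ convergence of $u$ to $U$ away from zero), giving $u_t\to -u_{0,z}/\sqrt{2}\le 0$. The strong maximum principle upgrades $u_t\le 0$ to $u_t<0$. An identical argument applied to $v=-u_s$ in $\ocal\times(0,+\infty)$ (here $\{t=0\}$ is interior to $\ocal$ and the PDE, being $-\div(\lambda^a\nabla u_s)=-\lambda^a(m-1)/s^2\cdot u_s$, is regular throughout $\{s>0\}$) yields $u_s>0$ in $\ocal\times[0,+\infty)$. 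Combining through $u_z=(u_s-u_t)/\sqrt{2}$ gives $u_z>0$ in all of $\ocal\times[0,+\infty)$, including on the axis $\{t=0\}$.

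With $u_z\ge 0$ in hand, the coupled equation for $u_y$ becomes amenable: on $\ocal$ one has $1/s^2-1/t^2\le 0$, so the mixed term is nonpositive and $Lu_y\le \tfrac{m-1}{2}(1/s^2+1/t^2)u_y$, i.e.\ $-\div(\lambda^a\nabla u_y)\ge bu_y$ with $b=-\lambda^a\tfrac{m-1}{2}(1/s^2+1/t^2)\le 0$. Applying Proposition~\ref{Prop:MaxPrincipleLinearizedOperator} to $v=-u_y$ in $(\ocal\setminus\{t=0\})\times(0,+\infty)$—noting $u_y=0$ on $\ccal$ (tangential derivative of $u\equiv 0$), $u_y=u_s/\sqrt{2}>0$ on $\{t=0,s>0\}$, and $u_y\to 0$ at infinity—gives $u_y\ge 0$; strict positivity in $\ocal\times[0,+\infty)$ then follows from the strong maximum principle (since $u_y\not\equiv 0$, as shown on $\{t=0\}$). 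For (iii), apply Proposition~\ref{Prop:MaxPrincipleLinearizedOperator} to $v=-u_{st}$ in the same domain: $\mathscr{L}_u v=-f''(u)u_s u_t<0$ on $\partial_0\Omega$, since $f''<0$, $u_s>0$, $u_t<0$; differentiating the antisymmetry $u(s,t)=-u(t,s)$ in both variables yields $u_{st}(s,t)=-u_{st}(t,s)$, hence $u_{st}\equiv 0$ on $\ccal$; differentiating $u_t(s,0,\lambda)\equiv 0$ in $s$ gives $u_{st}\equiv 0$ on $\{t=0\}$; and the asymptotics $u_{st}\to U_{st}=-u_{0,zz}/2\ge 0$ (using concavity of $u_0$ in $z>0$) yield the required bound at infinity. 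The strong maximum principle concludes (iii), and the final assertion on $\partial_\eta u=\alpha\partial_y u-\beta\partial_t u$ is immediate by linearity.

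The main obstacle is the coupled, non-closed structure of the PDE satisfied by $u_y$, which precludes a direct application of Proposition~\ref{Prop:MaxPrincipleLinearizedOperator}. This is overcome by establishing (ii) and the auxiliary positivity $u_s>0$ first, via their clean, self-contained linearized equations; the sign of $u_z$ then follows automatically, and the mixed term in the $u_y$-equation acquires the correct sign to close the argument.
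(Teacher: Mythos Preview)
Your argument is essentially the paper's: derive the linearized equations for $u_s$, $u_t$, $u_y$, $u_{st}$, verify the boundary data on $\ccal\times[0,+\infty)$ and on $\{t=0\}$, check the behavior at infinity via the layer asymptotics, and apply Proposition~\ref{Prop:MaxPrincipleLinearizedOperator} followed by the strong maximum principle. Your decomposition of the $u_y$-equation in terms of $u_z$ is algebraically equivalent to the paper's decomposition in terms of $u_t$ (the paper writes $\div(\lambda^a\nabla u_y)=(m-1)\lambda^a s^{-2}u_y+(m-1)\lambda^a\tfrac{s^2-t^2}{\sqrt{2}\,s^2t^2}u_t$ and uses $u_t\le 0$ directly).

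One point to tighten: the second-derivative asymptotic $u_{st}(\cdot,0)\to U_{st}(\cdot,0)$ is not a consequence of ``interior elliptic estimates''. The convergence you need is on the Neumann boundary $\{\lambda=0\}$, and $U$ is not $a$-harmonic in the extension (see \eqref{Eq:EqForULayer}), so the difference $u-U$ does not solve a clean homogeneous problem to which one can simply apply Schauder. The paper treats this as a separate statement (Lemma~\ref{Lemma:AsymptoticSecondDerivative}), proved by a compactness/Liouville argument in the spirit of the first-derivative asymptotics in \cite{Cinti-Saddle2}. Likewise, the concavity $u_{0,zz}<0$ for $z>0$ that you invoke is established independently (Lemma~\ref{Lemma:MonotonicityLayer}) via the same maximum-principle scheme applied to the one-dimensional layer.
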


The monotonicity properties (i) and (ii) were proved in the papers of Cinti \cite{Cinti-Saddle,Cinti-Saddle2} for the so-called maximal saddle solution ---note that in those papers the uniqueness of the saddle-shaped solution was not known yet. From her result and our uniqueness theorem, (i) and (ii) in Proposition~\ref{Prop:MonotonicityProperties} follow. Nevertheless, we present here a new proof of them by applying the maximum principle for the linearized operator to certain equations satisfied by $u_s$ and $u_t$. A similar argument will establish the new property (iii) for the crossed second derivative $u_{st}$. 

To conclude this introduction, let us comment briefly on the importance of problem \eqref{Eq:AllenCahn} and its relation with a conjecture of De Giorgi and the theory of minimal surfaces.

The interest on problem~\eqref{Eq:AllenCahn} originates from a famous conjecture of De Giorgi for the classical Allen-Cahn equation. It reads as follows. Let $u$ be a bounded solution to $-\Delta  u = u - u^3 $ in $\R^n$ which is monotone in one direction, say $\partial_{x_n} u > 0$. Then, if $n\leq 8$, $u$ is one dimensional, i.e., $u$ depends only on one Euclidean variable. This conjecture was proved to be true in dimension $n=2$ by Ghoussoub and Gui \cite{GhoussoubGui}, and in dimension $n=3$ by Ambrosio and Cabr\'e \cite{AmbrosioCabre}. For dimensions $4\leq n \leq 8$, and under the additional assumption 
\begin{equation}
\label{Eq:SavinCondition}
\lim_{x_n \to \pm \infty} u(x',x_n) = \pm 1 \quad \text{ for all } x'\in \R^{n-1}\,,
\end{equation}
the conjecture was established by Savin \cite{Savin-DeGiorgi} (see also the previous work of Ghoussoub and Gui \cite{GhoussoubGui-odd} in dimensions 4 and 5 for antisymmetric solutions). A counterexample to the conjecture in dimensions $n \geq 9$ was given by del~Pino, Kowalczyk and Wei \cite{delPinoKowalczykWei}. 

The corresponding conjecture in the nonlocal setting, where one replaces the operator $-\Delta$ by $\fraclaplacian$, has been widely studied in the last years. In this framework, the conjecture has been proven to be true in dimension $n=2$ by Cabr\'e and Sol\`a-Morales in \cite{CabreSolaMorales} for $\s=1/2$, and extended to every power $0<\s<1$ by Cabr\'e and Sire in \cite{CabreSireII} and also by Sire and Valdinoci in \cite{SireValdinoci}. In dimension $n=3$, the conjecture has been proved by Cabr\'e and Cinti for $1/2 \leq \s < 1$ in \cite{CabreCinti-EnergyHalfL, CabreCinti-SharpEnergy} and by Dipierro, Farina, and Valdinoci for $0<\s<1/2$ in \cite{DipierroFarinaValdinoci}. Recently, in \cite{Savin-Fractional,Savin-Fractional2} Savin has established the validity of the conjecture in dimensions $4\leq n \leq 8$ and for $1/2 \leq \s < 1$, but assuming the additional hypothesis \eqref{Eq:SavinCondition}. Under the same extra assumption, the conjecture is true in the same dimensions for $0<\s<1/2$ and $\s$ close to $1/2$, as proved by Dipierro, Serra, and Valdinoci in \cite{DipierroSerraValdinoci}. The most recent result concerning the proof of the conjecture is the one by Figalli and Serra in \cite{FigalliSerra}, where they have established the conjecture in dimension $n=4$ and $\s=1/2$ without requiring the additional limiting assumption \eqref{Eq:SavinCondition}. Note that, without \eqref{Eq:SavinCondition}, the analogous result for the Laplacian in dimension $n=4$ is not known. In the forthcoming paper \cite{CabreCintiSerra-Stable}, Cabr\'e, Cinti, and Serra prove the conjecture in dimension $n=4$ for $0<\s<1/2$ and $\s$ sufficiently close to $1/2$. A counterexample to the De Giorgi conjecture for fractional Allen-Cahn equation in dimensions $n \geq 9$ for $\s \in ( 1/2 , 1)$ has been very recently announced in \cite{ChanLiuWei}.

Coming back to the local Allen-Cahn equation, while studying this conjecture by De Giorgi, another question arose naturally: do global minimizers in $\R^n$ of the Allen-Cahn energy have one-dimensional symmetry? A deep result from Savin \cite{Savin-DeGiorgi} states that in dimension $n \leq 7$ this is indeed true. On the other hand, Liu, Wang, and Wei \cite{LiuWangWei} have constructed minimizers in dimensions $n\geq 8$ which are not one-dimensional. We should mention that the same question for stable solutions (instead of minimizers) is still largely open, only solved in dimension $n=2$ (see \cite{GhoussoubGui,BerestyckiCaffarelliNiremberg-Qualitative}).

The saddle-shaped solution is of special interest regarding the previous two questions. It is expected to be a simple example of non one-dinsional minimizer to the Allen-Cahn equation in high dimensions, having the same role as the Simons cone for the theory of minimal surfaces. In addition, regarding the conjecture by De Giorgi, if the saddle-shaped solution was proved to be a minimizer in some even dimension $2m$, we would automatically have a counterexample to the conjecture in higher dimensions. This is due to a result by Jerison and Monneau \cite{JerisonMonneau}, where they show that such a counterexample in dimension $\R^{n+1}$ can be constructed with a rather natural procedure if there exists a non one-dimensional global minimizer of $-\Delta u = f(u)$ in $\R^n$ which is bounded and even with respect to each coordinate. The saddle-shaped solution is of special interest in relation with the Jerison-Monneau program since it is even with respect to all the coordinate axis and it is expected to be a minimizer in high dimensions. If proved to be a minimizer, the saddle-shaped solution would provide an alternative construction of a counterexample to the original conjecture of De Giorgi, different from the one of \cite{delPinoKowalczykWei}.

Let us explain why the Allen-Cahn equation has a very strong connection with the theory of minimal surfaces. A deep result from the seventies by Modica and Mortola (see \cite{Modica,ModicaMortola}) states that considering an appropriately rescaled version of the Allen-Cahn equation, the corresponding energy functionals $\Gamma$-converge to the perimeter functional. Thus, the minimizers of the equation converge to the characteristic function of a set of minimal perimeter. This same fact holds for the equation with the fractional Laplacian, though we have two different scenarios depending on the parameter $\s \in (0,1)$. If $\s \geq 1/2$, the rescaled energy functionals associated to \eqref{Eq:AllenCahn} $\Gamma$-converge to the classical perimeter (see \cite{AlbertiBouchitteSeppecher,Gonzalez}), while in the case $\s \in (0,1/2)$ they $\Gamma$-converge to the fractional perimeter (see \cite{SavinValdinoci-GammaConvergence}). As a consequence, if the saddle-shaped solution was proved to be a minimizer in a certain dimension for some $\s \in (0,1/2)$, it would follow that the Simons cone $\ccal$ would be a minimizing nonlocal $(2\s)$-minimal surface in such dimensions. As mentioned before, this last statement is an open problem in any dimension. Our Corollary~\ref{Cor:SimonsConeStableDim14} on stability is related to this question, but for a weaker property than minimality.

By a result of Cabr\'e, Cinti, and Serra in \cite{CabreCintiSerra-Stable}, also the stability is preserved in the blow-down limit when $\s\in(0,1/2)$. Therefore, a limit of stable solutions to \eqref{Eq:AllenCahn} with $\s \in (0,1/2)$ will be a stable set for the $(2\s)$-perimeter. Thus, as a consequence of Theorem~\ref{Thm:Stability} we deduce Corollary~\ref{Cor:SimonsConeStableDim14}.

The paper is organized as follows. In section \ref{Sec:MaximumPrinciple} we prove the maximum principle for the linearized operator in $\ocal$, Proposition~\ref{Prop:MaxPrincipleLinearizedOperator}. Section \ref{Sec:Uniqueness} is devoted to show Theorem~\ref{Thm:Uniqueness} concerning the uniqueness of the saddle-shaped solution. In Section \ref{Sec:Layer} we establish some monotonicity properties of the layer solution $u_0$, as well as the pointwise estimate for the saddle solution in terms of the layer $u_0$, stated in Proposition~\ref{Prop:SaddleUnderLayer}. In Section~\ref{Sec:Monotonicity} we prove the monotonicity and second derivative properties of the saddle solution stated in Proposition~\ref{Prop:MonotonicityProperties}. Finally, Section~\ref{Sec:Stability} concerns the proof of the stability results, Theorem~\ref{Thm:Stability} and  Corollary~\ref{Cor:SimonsConeStableDim14}.

\section{Maximum principles for the linearized operator}
\label{Sec:MaximumPrinciple}

In this section we establish Proposition~\ref{Prop:MaxPrincipleLinearizedOperator}, a maximum principle for the linearized operator. To prove it, we follow the ideas appearing in \cite{Cabre-Saddle}, where an analogous maximum principle is proved for the local case $\s = 1$. The proof for the Laplacian uses a maximum principle in ``narrow'' sets (see for instance \cite{Cabre-Topics,BerestyckiNirembergVaradhan}). In our case, the use of the extension problem requires a similar maximum principle but in pairs of sets that we will call ``extension-narrow''\!, defined next.

\begin{definition}[``Extension-narrow'' pair of sets]
	Let $\Omega \subset \R^{n+1}_+$ be an open set, not necessarily bounded, and let $\Gamma \subset \partial_0 \Omega$ be nonempty ---recall that $\partial_0 \Omega$ is defined by \eqref{Eq:DefBoundaries}. Given $\theta \in (0,1)$ and $a\in (-1,1)$, we define $R_a(\Omega,\Gamma,\theta) \in (0, +\infty]$ to be the smallest positive constant $R$ for which
	\begin{equation}
	\label{Eq:Narrow}
	\dfrac{|B^+_R(x)\setminus \Omega|_a}{|B^+_R(x)|_a} \geq \theta \quad \text{ for every } x \in \Gamma\,,
	\end{equation}
	where 
	$$
	|E|_a := \int_E \lambda^a \dx \d \lambda\,.
	$$
	We say that $R_a(\Omega,\Gamma,\theta) = + \infty$ if no such radius exists.
	
	From this definition, we will say that a pair $(\Omega, \Gamma)$ is ``extension-narrow'' if $R_a(\Omega,\Gamma,\theta)$ is small enough depending on certain quantities.
\end{definition}

Note that if in \eqref{Eq:Narrow} we consider $a=0$ and full balls centered at every point $x\in \Omega$, we recover the usual definition of ``narrow'' set. Here, instead, we only consider half-balls centered at points $x\in \Gamma \subset \partial_0\Omega$.

Let us remark that both sets $\Omega\in \R^{n+1}_+$ and $\Gamma\subset \partial_0\Omega$ play an important role in this notion of ``narrowness'', as illustrated in the following examples in $\R^2_+$. On the one hand, let $ \Omega_1 = \{\lambda>(x-\varepsilon)/2\} \cap \{\lambda>-(x+\varepsilon)/2\} \cap \{\lambda>0\}\subset \R^2_+ $ and let $\Gamma_1 = \partial_0 \Omega_1 = (-\varepsilon,\varepsilon)\subset \R$ ---see Figure~\ref{Fig:Examples}~(a). This pair has $R_0(\Omega_1,\Gamma_1,1/2) = +\infty$ for all $\varepsilon > 0$ even though $\Gamma_1$ is ``narrow'' in $\R$ in the usual sense if $\varepsilon$ is small enough. On the other hand, the pair consisting of $\Omega_2 = \{0<\lambda<\varepsilon\}$ and $\Gamma_2 = \partial_0 \Omega_2 = \R$ is ``extension-narrow'' if $\varepsilon$ is small enough, while $\Gamma_2$ is not ``narrow'' in the usual sense in $\R$ ---see Figure~\ref{Fig:Examples}~(b). 

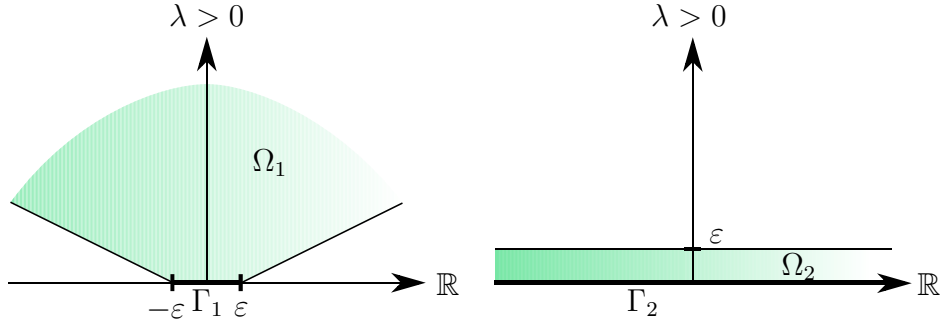
\begin{figure}
	\definecolor{CustomGreen}{RGB}{118,230,165}

\begin{tikzpicture}[y=0.80pt, x=0.80pt, yscale=-0.4100000, xscale=0.4100000, inner sep=0pt, outer sep=0pt]



\path[left color=CustomGreen, right color=transparent!0, path fading=north, opacity = 0.5, line width=0.424pt] (118.2429,764.3301) -- (21.1789,715.5369)
.. controls (94.5967,615.3536) and (195.9205,579.7926) .. (247.1606,579.7926)
.. controls (298.4006,579.7926) and (398.5544,619.4041) .. (472.6884,717.0446)
-- (285.9016,809.4614) .. controls (233.3585,809.4950) and (268.1303,807.9929)
.. (207.9356,807.9929) -- cycle;

\path[color=black,fill=black,line join=miter,line cap=butt,miter limit=4.00,even
odd rule,line width=3.168pt] (18.5176,807.9742) -- (18.5176,809.8759) --
(475.5404,809.8759) -- (475.5404,807.9742) -- cycle;

\path[draw=black,fill=black,even odd rule,line width=1.475pt]
(474.1177,809.0013) -- (467.2064,815.9192) -- (491.3962,809.0013) --
(467.2064,802.0833) -- cycle;

\path[color=black,fill=black,line join=miter,line cap=butt,miter limit=4.00,even
odd rule,line width=2.404pt] (246.0612,807.5509) -- (247.9610,807.5509) --
(247.9610,544.0301) -- (246.0612,544.0301) -- cycle;

\path[draw=black,fill=black,even odd rule,line width=1.475pt]
(247.0622,550.5293) -- (253.9736,557.4472) -- (247.0622,533.2344) --
(240.1508,557.4472) -- cycle;

\path[color=black,fill=black,line join=miter,line cap=butt,miter limit=4.00,even
odd rule,line width=1.786pt] (207.5049,806.5862) -- (207.5049,811.5256) --
(286.2549,811.5256) -- (286.2549,806.5862) -- cycle;

\path[draw=black,line join=miter,line cap=butt,even odd rule,line width=1.492pt]
(207.3807,799.1032) -- (207.3807,818.6875);

\path[draw=black,line join=miter,line cap=butt,even odd rule,line width=1.587pt]
(286.1082,799.1032) -- (286.1082,818.6875);

\path[draw=black,line join=miter,line cap=butt,miter limit=4.00,line
width=0.6pt] (207.3790,808.5203) -- (20.6223,716.0642);

\path[draw=black,line join=miter,line cap=butt,miter limit=4.00,line
width=0.6pt] (285.9016,809.4614) -- (472.6884,717.0446);

\node at (320, 670) {\normalsize $\Omega_1$};
\node at (200, 840) {\normalsize $-\varepsilon$};
\node at (287, 840) {\normalsize $\varepsilon$};
\node at (248, 832) {\normalsize $\Gamma_1$};
\node at (525, 810) {\normalsize $\R$};
\node at (247, 500) {\normalsize $\lambda>0$};


\path[left color=CustomGreen, right color=transparent!0, path fading=west, opacity = 0.9, line width=0.849pt]
(1010,770) -- (580,770) -- (580,810) --(1010,810) -- cycle;

\path[draw=black,fill=black,even odd rule,line width=1.475pt]
(1004.5579+30,809.5433) -- (997.6465+30,816.4612) -- (1021.8363+30,809.5433) --
(997.6465+30,802.6253) -- cycle;

\path[draw=black,fill=black,even odd rule,line width=1.475pt]
(777.5024+30,551.0713) -- (784.4138+30,557.9892) -- (777.5024+30,533.7765) --
(770.5911+30,557.9892) -- cycle;

\path[color=black,fill=black,line join=miter,line cap=butt,miter limit=4.00,even
odd rule,line width=2.404pt] (776.5014+30,809.5942) -- (778.4012+30,809.5942) --
(778.4012+30,546.0734) -- (776.5014+30,546.0734) -- cycle;

\path[color=black,fill=black,line join=miter,line cap=butt,miter limit=4.00,even
odd rule,line width=3.168pt] (550.2910+30,769.4678) -- (550.2910+30,771.3694) --
(1007.3138+30,771.3694) -- (1007.3138+30,769.4678) -- cycle;

\path[color=black,fill=black,line join=miter,line cap=butt,miter limit=4.00,even
odd rule,line width=4.303pt] (548.9578+30,806.6413) -- (548.9578+30,811.5806) --
(1006.1195+30,811.5806) -- (1006.1195+30,806.6413) -- cycle;

\path[color=black,fill=black,line join=miter,line cap=butt,miter limit=4.00,even
odd rule,line width=2.181pt] (767.3738+30,768.3177) -- (767.3738+30,772.3021) --
(786.9597+30,772.3021) -- (786.9597+30,768.3177) -- cycle;

\node at (900+30, 790) {\normalsize $\Omega_2$};
\node at (805+30, 755) {\normalsize $\varepsilon$};
\node at (700+50, 832) {\normalsize $\Gamma_2$};
\node at (1050+30, 810) {\normalsize $\R$};
\node at (773+30, 500) {\normalsize $\lambda>0$};

\end{tikzpicture}
	\caption{(a) A pair $(\Omega_1, \Gamma_1)$ satisfying $R_0(\Omega_1,\Gamma_1,1/2) = +\infty$ but with $\Gamma_1$ being ``narrow'' in $\R$. (b) An ``extension-narrow'' narrow pair $(\Omega_2, \Gamma_2)$ with $\Gamma_2$ not  ``narrow'' in $\R$.}
	\label{Fig:Examples}
\end{figure}

Once the quantity  $R_a(\Omega,\Gamma, \theta)$ is defined, we can state precisely the maximum principle in ``extension-narrow'' pairs.

\begin{proposition}[Maximum principle in ``extension-narrow'' pairs]
	\label{Prop:MaxPrincipleNarrow}
	Let $\Omega \subset \R^{n+1}_+$ be an open set and let $\Gamma \subset \partial_0 \Omega$ be nonempty. Assume that there exists a nonempty open cone $E\subset \partial_0 \R^{n+1}_+ = \R^n$ such that $(E \times (0,+\infty))\cap \Omega=\varnothing$. 
	
	Let $a\in (-1,1)$ and let $v \in C^2 (\Omega)\cap C(\overline{\Omega})$ be a function bounded from above such that $\lambda^a v_\lambda \in C (\overline{\Omega})$, and assume that it satisfies
	\begin{equation}
	\label{Eq:MaxPrincipleNarrow}
	\beqc{\PDEsystem}
	-\div(\lambda^a \nabla v) &\leq& b(x,\lambda) v & \text{ in } \Omega\,, \\
	\dfrac{\partial v}{\partial \nu^a}  + c(x) v &\leq & 0 & \text{ on } \Gamma \,, \\
	v & \leq & 0 & \text{ on } \partial \Omega \setminus \Gamma \,,
	\eeqc
	\end{equation}
	where $b \leq 0$ in $\Omega$ and $c$ is bounded from below on $\Gamma$.
	
	Then, for every $\theta \in (0,1)$ there exists a constant $R^*$, depending only on $n$, $a$, $\theta$, and $\norm{c_-}_{L^\infty(\Gamma)}$, such that $v\leq 0 $ in $\Omega$ whenever $R_a(\Omega,\Gamma,\theta) \leq R^*$.
\end{proposition}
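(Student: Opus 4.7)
The plan is to mimic the strategy of Berestycki-Nirenberg-Varadhan and its local analogue in \cite{Cabre-Saddle}, adapted to the degenerate weighted operator $-\div(\lambda^a\nabla\cdot)$. I argue by contradiction: suppose $w := v^+ \not\equiv 0$ in $\Omega$. Since $v\leq 0$ on $\partial\Omega\setminus\Gamma$, the function $w$ extends by zero to a locally $\hp{1}(\lambda^a)$ function on $\R^{n+1}_+$, and by the cone hypothesis it vanishes on the whole wedge $E\times(0,+\infty)$. The goal is a weighted energy estimate that, when $R_a(\Omega,\Gamma,\theta)$ is small, forces $w\equiv 0$.

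First, I multiply $-\div(\lambda^a\nabla v)\leq bv$ by $w\eta^2$ for a smooth cutoff $\eta$, integrate by parts over $\Omega$, and use $b(v^+)^2\leq 0$ together with the boundary condition $\partial v/\partial\nu^a + cv \leq 0$ on $\Gamma$. Since $w\equiv 0$ on $\partial_0\Omega\setminus\Gamma$ and on $\partial_L\Omega$, the only boundary contribution comes from $\Gamma$, and a Cauchy-Schwarz absorption yields a Caccioppoli-type estimate
\begin{equation*}
\int_\Omega \lambda^a |\nabla w|^2 \eta^2 \dx\d\lambda \; \leq \; C \int_\Omega \lambda^a w^2 |\nabla\eta|^2 \dx\d\lambda + \norm{c_-}_{\lp{\infty}(\Gamma)} \int_\Gamma w^2 \eta^2 \dx .
\end{equation*}
The key analytic input is a weighted trace-Poincar\'e inequality: for $g \in \hp{1}(B_R^+(x_0),\lambda^a)$ that vanishes on a subset of $B_R^+(x_0)$ of weighted measure $\geq \theta |B_R^+(x_0)|_a$, one has $\int_{B_R(x_0)\cap\{\lambda=0\}} g^2 \dx \leq C(n,a,\theta)\, R \int_{B_R^+(x_0)} \lambda^a |\nabla g|^2 \dx\d\lambda$. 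By the extension-narrow hypothesis, taking $R = R_a(\Omega,\Gamma,\theta)$, the set $B_R^+(x_0)\setminus\Omega$ provides such a vanishing region for $w$ at every $x_0 \in \Gamma$, and a Vitali-type cover of $\Gamma$ by half-balls of radius $\sim R_a$ with bounded overlap globalizes this to $\int_\Gamma w^2 \eta^2 \dx \leq C(n,a,\theta)\, R_a \int_\Omega \lambda^a |\nabla w|^2 \eta^2 \dx\d\lambda$. Choosing $R^*$ so that $C(n,a,\theta)\norm{c_-}_{\lp{\infty}(\Gamma)} R^* \leq 1/2$ absorbs the boundary term, leaving $\int\lambda^a |\nabla w|^2 \eta^2 \leq C' \int \lambda^a w^2 |\nabla\eta|^2$.

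To close the argument globally, I take radial cutoffs $\eta_\rho$ with $\eta_\rho \equiv 1$ on $B_\rho$, $|\nabla\eta_\rho|\leq 2/\rho$, and $\mathrm{supp}\,\eta_\rho\subset B_{2\rho}$, and let $\rho\to+\infty$. Since $w$ is bounded and vanishes on the entire wedge $E\times(0,+\infty)$, a scaled weighted Poincar\'e inequality of the form $\int_{B_{2\rho}^+\setminus B_\rho^+}\lambda^a w^2 \leq C\rho^2 \int_{B_{2\rho}^+\setminus B_\rho^+}\lambda^a|\nabla w|^2$ holds with a constant depending only on the opening of $E$; plugging this into the Caccioppoli bound forces the energies on large balls to dissipate, so $\nabla w\equiv 0$ and hence $w\equiv 0$ on a connected component containing $E\times(0,+\infty)$, contradicting $w\not\equiv 0$.

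The main technical obstacle is the weighted trace-Poincar\'e inequality with the prescribed vanishing-set hypothesis in the correct scaled form, together with the bounded-overlap Vitali cover of $\Gamma$, whose constants must depend only on $n$, $a$, and $\theta$. The global passage $\rho\to+\infty$ is also delicate, since $v$ is only assumed bounded (not in $\lp{2}(\lambda^a)$); the presence of the cone $E$ is essential here, as the wedge $E\times(0,+\infty)$ provides a set of positive density where $w$ vanishes, which is precisely what allows the Poincar\'e inequality on annuli to close the argument.
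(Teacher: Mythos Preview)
Your energy/Caccioppoli strategy is genuinely different from the paper's, which is pointwise: the paper first uses a Phragmen--Lindel\"of barrier $\psi(x,\lambda)=(1+\lambda^{2\s})|x|^\beta\phi(x/|x|)$ (built from the cone $E$) to show that $\sup_{\Omega_+} v=\sup_{\Gamma_+}v=:M$, and then, at an almost-maximizing sequence $x_k\in\Gamma_+$, subtracts an explicit torsion function and applies the \emph{weak Harnack inequality} (Tan--Xiong) to $M-(v-\phi)_+$ on $B_R^+(x_k)$; the narrowness forces $M\leq 0$.

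Your local step (Caccioppoli plus a weighted trace--Poincar\'e to absorb the $\Gamma$-term) is plausible, though the correct scaling is $R^{2\s}$, not $R$: by dilation one gets $\int_{\partial_0 B_R^+} g^2\leq C(n,a,\theta)\,R^{1-a}\int_{B_R^+}\lambda^a|\nabla g|^2$. This is harmless since $R$ is small.

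The real gap is the global closure as $\rho\to\infty$. After absorbing the boundary term you have
\[
I_\rho:=\int_{B_\rho^+}\lambda^a|\nabla w|^2\ \leq\ C\,\rho^{-2}\int_{B_{2\rho}^+\setminus B_\rho^+}\lambda^a w^2\ \leq\ C'\big(I_{2\rho}-I_\rho\big),
\]
using your annular Poincar\'e from the cone $E$. This is Widman hole--filling: $I_\rho\leq\theta_0 I_{2\rho}$ with $\theta_0=C'/(C'+1)<1$. But iterating only gives $I_\rho\leq\theta_0^k I_{2^k\rho}$, and the sole a priori bound you have (from $w$ bounded) is $I_{2^k\rho}\lesssim (2^k\rho)^{n-1+a}$, so the product $\theta_0^k 2^{k(n-1+a)}$ need not tend to zero. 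You would need $I_\infty<\infty$ to conclude, and nothing in the hypotheses (only $v$ bounded from above) gives this. The cone $E$ yields the Poincar\'e constant on annuli but does \emph{not} by itself control the growth of $I_\rho$. In the paper, the cone is used instead to build the Phragmen--Lindel\"of barrier $\psi$, which does the work of ruling out bad behavior at infinity by localizing $\sup v$ to $\Gamma_+$; the subsequent contradiction is then purely local via weak Harnack. If you want to rescue the energy route, you would need a separate argument---essentially a barrier or growth estimate at infinity---to show $I_\infty<\infty$ before hole--filling can close.
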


Before proving this result, let us explain why we need to introduce the notion of ``extension-narrowness''\!. In the proof of Proposition~\ref{Prop:MaxPrincipleLinearizedOperator} we will use this maximum principle in a pair $(\Omega, \Gamma)$ with $\Omega \subset \ocal \times (0,+\infty)$, and $\Gamma \subset \partial_0 \Omega$ in an $\varepsilon$-neighborhood in $\ocal$ of the cone $\ccal$ . In this case, $\Omega$ could  be very big (and not ``narrow'' in the usual sense) in $\R^{2m+1}_+$, as in Figure~\ref{Fig:ExtensionNarrow}. However, $\ocal^c\times (0,+\infty)$ is contained in the complement of $\Omega$ --- even if $\Omega$ filled all $\ocal\times (0,+\infty)$. Thus, it follows readily that $(\Omega,\Gamma)$ is ``extension-narrow'' by using that balls in this notion are centered in $\Gamma$ (see Corollary~\ref{Cor:MaxPrincipleNarrowSaddle} below for the details).

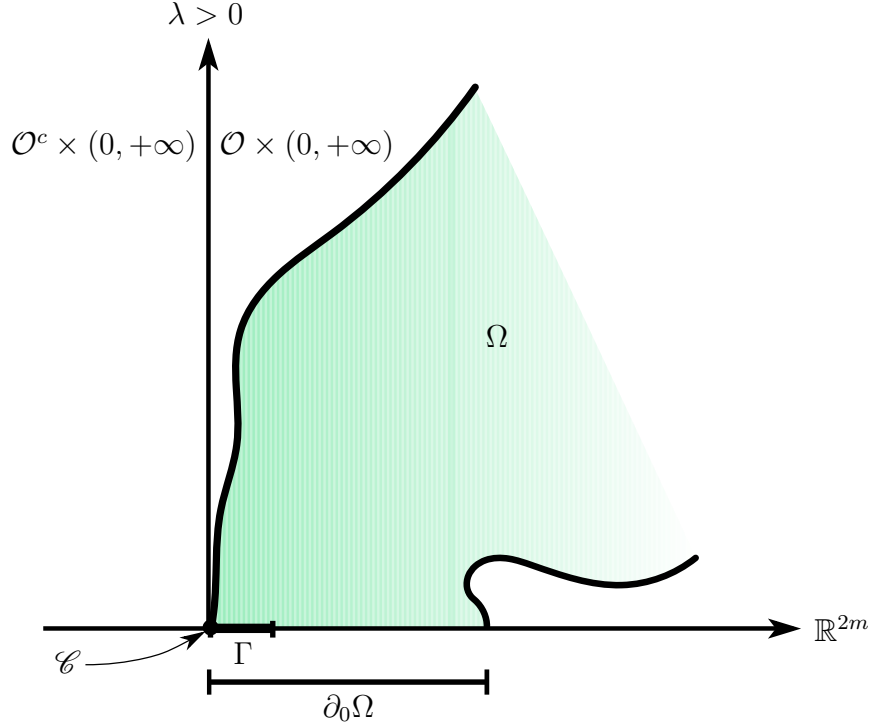
\begin{figure}
	\definecolor{CustomGreen}{RGB}{118,230,165}

\begin{tikzpicture}[y=0.80pt, x=0.80pt, yscale=-0.6, xscale=0.6, inner sep=0pt, outer sep=0pt]
    
  	\path[scale=0.938, left color=CustomGreen, right color=transparent!0, path fading=south, fading transform={rotate=165}, opacity = 0.5, line width=0.566pt] 
   	(184.5972,730.9394) ..
    controls (180.9575,730.9394) and (175.7847,731.1061) .. (175.6080,730.5759) ..
    controls (171.9672,733.0400) and (176.5782,725.4990) .. (177.0743,722.3815) ..
    controls (177.9067,717.1499) and (178.9183,706.8342) .. (179.0403,699.1456) ..
    controls (179.1420,692.7318) and (179.4706,682.7353) .. (179.7596,674.0350) ..
    controls (180.8080,642.4728) and (181.7457,635.5373) .. (188.5866,610.8525) ..
    controls (194.0462,591.1524) and (196.2268,586.3680) .. (197.4590,578.2401) ..
    controls (198.7299,569.8573) and (197.9960,549.5841) .. (196.8925,533.9029) ..
    controls (196.1731,523.6790) and (196.0775,513.8122) .. (196.1359,504.1440) ..
    controls (196.1807,496.7341) and (198.3317,489.1993) .. (199.2864,483.2319) ..
    controls (200.3263,476.7322) and (207.0868,463.2405) .. (207.9281,461.6518) ..
    controls (209.5004,458.6828) and (212.8173,454.6922) .. (215.4496,451.0385) ..
    controls (219.4043,445.5493) and (224.1839,440.4696) .. (226.8947,437.6896) ..
    controls (230.7942,433.6906) and (235.0627,430.1254) .. (239.2315,426.3742) ..
    controls (245.7129,420.5421) and (252.4112,415.8531) .. (258.2915,411.7250) ..
    controls (262.4443,408.8096) and (275.7732,400.1228) .. (289.5695,388.7998) ..
    controls (304.1982,376.7937) and (320.8984,362.8705) .. (332.5668,350.4120) ..
    controls (342.1077,340.2249) and (352.7447,329.9966) .. (360.9981,320.1828) ..
    controls (366.8928,313.1736) and (372.7162,306.7946) .. (376.9603,300.9333) ..
    controls (380.9052,295.4854) and (385.1250,290.9400) .. (387.8389,287.3902) ..
    controls (394.4930,278.6863) and (396.4141,275.0680) .. (396.5070,275.1820) ..
    controls (396.6358,275.3403) and (414.7734,312.7490) .. (439.3322,364.6649) ..
    controls (486.7650,464.9352) and (559.0040,618.9806) .. (577.6929,660.8923) ..
    controls (580.6800,667.5911) and (582.5384,671.3050) .. (582.4448,671.6076) ..
    controls (581.1122,675.9196) and (566.5199,682.2313) .. (562.4424,684.3159) ..
    controls (557.7739,686.7027) and (533.7931,692.6797) .. (523.4964,694.4520) ..
    controls (517.9932,695.3993) and (491.2056,692.2455) .. (474.5341,686.9823) ..
    controls (446.4561,678.1181) and (419.6258,662.6193) .. (401.5646,675.2542) ..
    controls (388.1532,684.6362) and (393.3733,680.8042) .. (390.9447,688.7330) ..
    controls (387.6971,699.3350) and (399.0009,708.5054) .. (405.6164,720.1689) ..
    controls (405.6164,720.1689) and (407.3428,724.2187) .. (407.3428,731.3123) ..
    controls (385.1788,731.3123) and (265.2897,730.8248) .. (246.4143,730.8248) ..
    controls (225.9070,729.9416) and (204.6809,731.0053) .. (184.5972,730.9394) --
    cycle;
    
  	\path[draw=black,line join=miter,line cap=round,miter limit=4.00,even odd
    rule,line width=2.732pt] (164.6325,684.9149) .. controls (165.4599,680.7181)
    and (166.1341,676.4910) .. (166.6533,672.2450) .. controls (169.9255,645.4862)
    and (167.0382,618.0907) .. (172.2994,591.6509) .. controls (175.7296,574.4130)
    and (182.5705,557.9076) .. (184.8401,540.4789) .. controls (186.7951,525.4666)
    and (185.2973,510.2530) .. (184.4387,495.1384) .. controls (183.5802,480.0237)
    and (183.4049,464.5660) .. (187.8713,450.1008) .. controls (193.1075,433.1422)
    and (204.4385,418.6308) .. (217.4800,406.5922) .. controls (230.5216,394.5535)
    and (245.3615,384.6665) .. (259.6084,374.0818) .. controls (303.0951,341.7734)
    and (341.4415,302.5612) .. (372.7712,258.3641);

  	\path[draw=black,line join=miter,line cap=round,miter limit=4.00,even odd
    rule,line width=2.480pt] (381.9422,683.7850) .. controls (382.1756,676.5966)
    and (378.6287,668.3766) .. (372.5114,662.9285) .. controls (371.8317,662.3232)
    and (371.1223,661.7504) .. (370.4734,661.1123) .. controls (368.2437,658.9198)
    and (366.7904,655.9781) .. (366.2816,652.8926) .. controls (365.7728,649.8071)
    and (366.1931,646.5911) .. (367.3626,643.6909) .. controls (368.5320,640.7906)
    and (370.4368,638.2058) .. (372.7785,636.1333) .. controls (375.1202,634.0608)
    and (377.8919,632.4946) .. (380.8346,631.4366) .. controls (386.7201,629.3206)
    and (393.1862,629.2506) .. (399.3581,630.2625) .. controls (405.5300,631.2745)
    and (411.4767,633.3260) .. (417.3824,635.3850) .. controls (440.3746,643.4010)
    and (463.9431,651.7780) .. (488.2825,651.0739) .. controls (509.3214,650.4653)
    and (530.0955,642.8046) .. (546.4925,629.6082);

    \path[draw=black,line join=miter,line cap=butt,miter limit=4.00,even odd rule,line width=1.7pt] (32.3523,685.2737) -- (623.5039,685.2737);  
    \path[draw=black,line join=miter,line cap=butt,miter limit=4.00,even odd rule,line width=1.7pt] (162.5371,225.4824) -- (162.5371,686.4492);
    
    \path[draw=black,fill=black,even odd rule,line width=1.7pt]
    (611.9033,685.3686) -- (608.2633,690.0086) -- (623.5033,685.3686) --
    (608.2633,680.7286) -- cycle;
    \path[draw=black,fill=black,even odd rule,line width=1.7pt]
    (162.5371,237.0830) -- (167.1771,240.7230) -- (162.5371,225.4830) --
    (157.8971,240.7230) -- cycle;

	\path[color=black,fill=black,line join=miter,line cap=butt,miter limit=4.00,even
	odd rule,line width=0.808pt] (157.2344,688.6523) .. controls
	(152.0012,691.5920) and (146.7226,694.4515) .. (141.4004,697.2266) .. controls
	(136.5055,699.7788) and (131.5748,702.2572) .. (126.4297,704.1660) .. controls
	(122.3383,705.6839) and (118.1124,706.8419) .. (113.8496,707.8594) .. controls
	(98.0654,711.6268) and (81.8228,713.4748) .. (65.5957,713.3516) --
	(65.5879,714.3633) .. controls (81.8969,714.4871) and (98.2202,712.6282) ..
	(114.0840,708.8418) .. controls (118.3683,707.8192) and (122.6341,706.6519) ..
	(126.7813,705.1133) .. controls (131.9965,703.1784) and (136.9637,700.6778) ..
	(141.8672,698.1211) .. controls (147.1988,695.3411) and (152.4882,692.4780) ..
	(157.7305,689.5332) -- cycle;
	
	\path[draw=black,fill=black,even odd rule,line width=0.862pt]
	(148.6738,694.0419) -- (147.1296,699.5447) -- (157.4825,689.0937) --
	(143.1710,692.4977) -- cycle;

  	\path[draw=black,line join=miter,line cap=butt,even odd rule,line width=2.056pt]
    (163.8327,727.5372) -- (382.4599,727.5372);

  	\path[draw=black,line join=miter,line cap=butt,even odd rule,line width=1.681pt]
    (163.0954,717.6006) -- (163.0954,737.4660);
    
  	\path[draw=black,line join=miter,line cap=butt,even odd rule,line width=1.788pt]
    (381.9422,717.6006) -- (381.9422,737.4660);
    
  	\path[draw=black,line join=miter,line cap=butt,miter limit=4.00,even odd
    rule,line width=2.040pt] (164.1431,679.2655) -- (164.1431,692.4511);
    
    \path[draw=black,line join=miter,line cap=butt,miter limit=4.00,even odd
    rule,line width=2.040pt] (213.2179,679.2655) -- (213.2179,692.4511);
    
  	\path[draw=black,line join=miter,line cap=butt,miter limit=4.00,even odd
    rule,line width=3.550pt] (164.1431,685.1777) -- (213.2179,685.1777);
    
    \path[fill=black,line cap=round,miter limit=4.00,even odd rule,line
    width=0.000pt]
    (170.0039,686.9028)arc(22.850:108.383:6.275)arc(108.383:193.916:6.275)arc(193.916:279.448:6.275)arc(-80.552:4.981:6.275);

    \node at (160,200) {\normalsize $\lambda>0$};
    \node at (662, 685) {\normalsize $\mathbb R^{2m}$};
    \node at (240, 305) {\normalsize $\ocal \times (0,+\infty)$};
    \node at (80, 305) {\normalsize $\ocal^c \times (0,+\infty)$};
    \node at (390, 455) {\normalsize $\Omega$};
    \node at (52, 712) {\normalsize $\ccal$};
    \node at (273, 748) {\normalsize $\partial_0 \Omega$};    
    \node at (190, 705) {\normalsize $\Gamma$}; 
\end{tikzpicture}
	\caption{An example of a pair $(\Omega,\Gamma)$ which is ``extension-narrow''\!.}
	\label{Fig:ExtensionNarrow}
\end{figure}

To prove Proposition~\ref{Prop:MaxPrincipleNarrow} we need the following weak Harnack inequality.

\begin{proposition}[Proposition 3.2 of \cite{TanXiong}]
	\label{Prop:WeakHarnack}
	Let $v \in H^1(B_R^+, \lambda^a)$ be a nonnegative function that weakly satisfies 
	$$
	\beqc{\PDEsystem}
	-\div(\lambda^a \nabla v) &\geq& 0 & \textrm{ in } B_R^+\,, \\
	\dfrac{\partial v}{\partial \nu^a} &\geq & 0 & \textrm{ on }  \partial_0 B_R^+\,.
	\eeqc
	$$
	
	Then, there exists a constant $p_0 > 0$, depending only on $n$ and $a$, such that for all $p\leq p_0$,
	\begin{equation}
	\label{Eq:WeakHarnack}
	\left( \int_{B_{R/2}^+} \lambda^a v^p \dx \d \lambda \right )^{1/p} \leq C_h R^{\frac{n+1+a}{p} } \inf_{B^+_{R/4}} v\,,
	\end{equation}
	for a positive constant $C_h$ depending only on $n$ and $a$.
\end{proposition}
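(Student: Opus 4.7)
The plan is to reduce the statement to the classical weak Harnack inequality for divergence-form operators with an $A_2$-Muckenhoupt weight, in the form established by Fabes, Kenig, and Serapioni. The key observation is that for $a\in(-1,1)$ the weight $|\lambda|^a$ is in the $A_2$ class on $\R^{n+1}$, and that the homogeneous Neumann condition $\partial v/\partial \nu^a \geq 0$ on $\partial_0 B_R^+$ is exactly the compatibility condition needed so that the even reflection across $\{\lambda=0\}$ remains a weighted supersolution on the full ball.

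First I would define $\tilde v(x,\lambda):=v(x,|\lambda|)$ on $B_R\subset\R^{n+1}$ and verify that $\tilde v$ is a nonnegative weak supersolution of $-\div(|\lambda|^a\nabla \tilde v)\geq 0$ in $B_R$. Given a nonnegative test function $\psi\in C^\infty_c(B_R)$, I would split it as $\psi=\psi_e+\psi_o$ into its even and odd parts in the $\lambda$-variable. Because $\tilde v$ is even, only $\psi_e$ contributes when paired against $|\lambda|^a\nabla \tilde v$; and since $\psi\geq 0$ implies $\psi_e\geq 0$, the function $\varphi:=\psi_e\evalat{B_R^+}$ is a nonnegative admissible test function for the boundary value problem on $B_R^+$. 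Using the weak formulation of the Neumann inequality $\partial v/\partial\nu^a\geq 0$, the identity
$$
\int_{B_R}|\lambda|^a\nabla \tilde v\cdot\nabla\psi\,\dx \d\lambda=2\int_{B_R^+}\lambda^a\nabla v\cdot\nabla\varphi\,\dx \d\lambda\geq 0
$$
then follows. Thus $\tilde v$ is a weak supersolution for $-\div(|\lambda|^a\nabla\cdot)$ on the full ball with an $A_2$ weight.

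Next I would invoke the classical weak Harnack inequality for $A_2$-weighted divergence-form supersolutions (Fabes--Kenig--Serapioni). This yields an exponent $p_0=p_0(n,a)>0$ such that, for all $0<p\leq p_0$,
$$
\left(\int_{B_{R/2}}|\lambda|^a\tilde v^p\,\dx \d\lambda\right)^{1/p}\leq C\,R^{(n+1+a)/p}\inf_{B_{R/4}}\tilde v.
$$
By the symmetry of $\tilde v$, the left-hand integral equals $2\int_{B_{R/2}^+}\lambda^a v^p\,\dx \d\lambda$ and the infimum on $B_{R/4}$ equals the infimum on $B_{R/4}^+$. Absorbing the factor $2^{1/p}$ into the final constant $C_h$ gives exactly \eqref{Eq:WeakHarnack}.

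The hard part is the Fabes--Kenig--Serapioni ingredient itself: it rests on the weighted Sobolev--Poincar\'e inequality for $|\lambda|^a$ (available precisely because $|a|<1$ puts the weight in $A_2$), followed by a Moser iteration applied to negative powers of $\tilde v$ and a John--Nirenberg-type argument to pass from negative to small positive exponents. These steps are standard in the $A_p$ theory but technically delicate, and it is for this reason that, rather than reproduce the full iteration, I would cite \cite{TanXiong}, where the argument is carried out directly in the boundary-weighted half-space setting so that the reflection trick above is not even strictly needed.
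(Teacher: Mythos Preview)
The paper does not prove this proposition at all; it merely quotes it as Proposition~3.2 of \cite{TanXiong} and uses it as a black box in the proof of Proposition~\ref{Prop:MaxPrincipleNarrow}. Your sketch---even reflection across $\{\lambda=0\}$ to obtain a nonnegative weak supersolution for the $A_2$-weighted operator $-\div(|\lambda|^a\nabla\cdot)$ on the full ball, followed by the Fabes--Kenig--Serapioni weak Harnack inequality---is correct and is the standard way to derive such boundary Harnack-type estimates for the extension problem; it is in fact the route taken in \cite{TanXiong}, which you yourself end by citing. So there is no discrepancy to analyze: the paper offers no proof, and your proposal supplies the expected one.
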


With this result available, we can now present the proof of the maximum principle in ``extension-narrow'' pairs.

\begin{proof}[Proof of Proposition~\ref{Prop:MaxPrincipleNarrow}] 
	Define the sets 
	$$
	\Omega_+ := \left\{(x,\lambda) \in \Omega \ : \ v(x,\lambda)>0 \right\} 
	\quad \text{ and } \quad 
	\Gamma_+ := \partial \Omega_+ \cap \Gamma\,,
	$$ 
	and by contradiction assume that $\Omega_+$ is nonempty. Then, since $b\leq 0$,  $v$ satisfies
	$$
	\beqc{\PDEsystem}
	-\div(\lambda^a \nabla v) &\leq& 0 & \text{ in } \Omega_+\,, \\
	\dfrac{\partial v}{\partial \nu^a}  + c(x) v &\leq & 0 & \text{ on } \Gamma_+ \text{ (if this set is nonempty)}\,, \\
	v & \leq & 0 & \text{ on } \partial \Omega_+ \setminus \Gamma_+\,.
	\eeqc
	$$
	Now, we proceed in two steps in order to arrive at a contradiction.
	
	\textbf{Step 1.}
	First, we claim that if $\Gamma_+$ is nonempty then $\sup_{\Omega_+} v = \sup_{\Gamma_+} v $. That is, if we call 
	$$
	\overline{v} := v - \sup_{\Gamma_+} v,
	$$
	we then have $\overline{v} \leq 0$ in $\Omega_+$. To prove this, we use a classical Phragmen-Lindel\"of-type argument, as follows. Similar methods appear, among many others, in the proof of Theorem~1.2 of \cite{BerestyckiCaffarelliNiremberg-Monotonicity}, or Section~2.4 of \cite{CabreSolaMorales}.
	
	We now claim that, since the cone $E$ is open, there exists a nonempty open cone $F \subset E$ satisfying
	\begin{equation}
	\label{Eq:DistanceCones}
	|x-y| \geq c_0 > 0 \quad \text{ for every } x\in E^c \ \text{ and } y \in \overline{F}\,,
	\end{equation}
	for some positive constant $c_0$.
	
	Indeed, since $E$ is an open cone (with vertex, say, $z\in \partial E$), there exists a circular cone $E'\subset E$ with the same vertex $z$. Then, by sliding this circular cone in the direction of its axis, which can be assumed to be $e_n = (0,...,0,1)$, we obtain a new open cone $F\subset E$. Let us now show \eqref{Eq:DistanceCones}. Since $F \subset E' \subset E$, it is enough to prove \eqref{Eq:DistanceCones} for $x\in \partial E'$ and $ y\in\partial F$. Hence, we have
	$$x_n-z_n = \omega |x'-z'| \quad \text{ and } \quad y_n-z_n = \tau + \omega |y'-z'|, $$
	for some positive constants $\omega$ and $\tau$. Here, we are using the notation $z = (z',z_n)$. Now, if we call $\sigma = |x'-z'|-|y'-z'|$, we have $|x'-y'|\geq |\sigma|$ and thus
	\begin{align*}
	|x-y|^2 &= |x'-y'|^2 + |x_n-y_n|^2 \geq \sigma^2 + (\omega \sigma - \tau)^2   \\
	&= \left ( \sqrt{1+\omega^2} \sigma - \dfrac{\omega\tau}{\sqrt{1+\omega^2}}\right )^2 + \dfrac{\tau^2}{1+\omega^2}\geq\dfrac{\tau^2}{1+\omega^2},
	\end{align*}
	where the last constant is in fact the minimum distance between points on $\partial E'$ and $\partial F$. 
	
	Now, without loss of generality, we may assume that the vertex of $F$ is the origin. Let $F'$ be an open cone with the same vertex as $F$, and such that $\overline{F'\cap \Sph^{n-1}} \subset F\cap \Sph^{n-1}$. Let $\phi$ be the first eigenfunction of the Laplace-Beltrami operator in $\Sph^{n-1} \setminus \overline{F'}\subset \R^n$ with zero Dirichlet boundary conditions on $\partial F' \cap \Sph^{n-1}$, and let $\mu>0$ be its associated eigenvalue. Since $\partial F' \cap \Sph^{n-1}$ is contained in $F$, there exists a positive constant $\delta$ such that $\phi\geq \delta > 0$ in $\Sph^{n-1} \setminus \overline{F}$. Now, define the auxiliary function
	$$ 
	\psi(x,\lambda) = (1+\lambda^{2\s}) |x|^\beta \phi(x/|x|), 
	$$
	where $\beta$ is a positive real number and $\s = (1-a)/2 \in (0,1)$. Then, $\phi(x/|x|)\geq \delta$ for each $(x,\lambda) \in \Omega_+$,  since $x/|x| \in \Sph^{n-1} \setminus \overline{F}$. Moreover, by \eqref{Eq:DistanceCones} with $y=0$, we deduce that
	$$ 
	\psi(x,\lambda) \geq \delta (1+\lambda^{2\s}) |x|^\beta \geq \delta c_0^\beta > 0 \ \ \text{ in } \Omega_+,
	$$
	since $0$ is the vertex of $F$.
	On the other hand, note that if we choose $\beta>0$ solving $\beta(\beta+n-2)=\mu$, we have that $\psi$ satisfies
	$$
	\beqc{\PDEsystem}
	-\div(\lambda^a \nabla \psi) &=& 0 & \text{ in } \Omega_+\,, \\
	\ds \lim_{(x,\lambda)\in \Omega_+, \ |(x,\lambda)|\to +\infty} \psi & = & +\infty.
	\eeqc
	$$
	Thus, if we define 
	$$\overline{w}:=\dfrac{\overline{v} }{\psi} = \dfrac{v - \sup_{\Gamma_+} v}{\psi}\,,$$
	proving that $\overline{v} \leq 0$ in $\Omega_+$ is equivalent to showing that $\overline{w}\leq 0$ in $\Omega_+$, since $\psi$ is positive. Now, since $\sup_{\Gamma_+} v \geq 0$, it is easy to show that $\overline{w}$ satisfies
	\begin{equation}
	\label{Eq:wTildeEquationNarrow}
	\beqc{\PDEsystem}
	-\div(\lambda^a \nabla \overline{w})-2\lambda^a\dfrac{\nabla \psi}{\psi}\cdot\nabla \overline{w} &\leq& 0 & \text{ in } \Omega_+\,, \\
	\overline{w} &\leq& 0 & \text{ on } \partial \Omega_+\,, \\
	\ds \lim_{(x,\lambda)\in \Omega_+, \ |(x,\lambda)|\to +\infty} \overline{w} & \leq & 0\,.
	\eeqc
	\end{equation}
	Then, by the classical maximum principle we deduce that $\overline{w}\leq 0$ in $\Omega_+$, which yields $\overline{v}\leq 0$ in $\Omega_+$.
	
	Note that if $\Gamma_+$ is empty, the same argument applied to $v$ instead of $\overline{v}$ yields a contradiction with the assumption that $\Omega_+$ is nonempty. From now on in this proof, we will assume that $\Gamma_+ \neq \varnothing$.
	
	\textbf{Step 2.} By Step 1 and the definition of $\Omega_+$, we have that
	\begin{equation}
	\label{Eq:PositiveSup}
	M := \sup_{\Gamma_+} v > 0\,.
	\end{equation}
	Therefore, since $v\leq 0$ on $\partial \Omega_+\setminus \Gamma_+$, there exists a sequence $(x_k,0)\in \Gamma_+$ such that
	$$
	v(x_k) = v(x_k,0) \geq M \bpar{1- \dfrac{1}{k}}\,,
	$$
	where we are identifying $v$ with its trace on $\R^n$ to simplify the notation. 
	
	Now, given any $R>0$, let $\overline{c}_{n,\s}$ be the constant such that
	$$
	\fraclaplacian \{ \overline{c}_{n,\s} (R^2 - |x-x_k|^2)^\s_+ \} = 1 \quad \text{ in } B_R (x_k)\,,
	$$
	(see \cite{BogdanEtAl} for its explicit value) and take $\phi = \phi(x,\lambda)$ to be the $\s$-harmonic extension of 
	$$
	\phi(x,0) = c_1 M  \overline{c}_{n,\s}  (R^2 - |x-x_k|^2)^\s_+\,,
	$$
	where $c_1$ is a positive constant to be chosen later. Thus, $\phi$ solves
	$$
	\beqc{\PDEsystem}
	-\div(\lambda^a \nabla \phi) &=& 0 & \text{ in } B^+_R(x_k)\,, \\
	\dfrac{\partial \phi}{\partial \nu^a} & = & \dfrac{c_1 M}{d_\s} & \text{ on } \partial_0 B^+_R(x_k)\,.
	\eeqc
	$$
	Moreover, on $\partial_0 B^+_R(x_k) \cap \Gamma_+$ we have
	$$
	\dfrac{\partial v}{\partial \nu^a} \leq - c v  \leq \norm{c_-}_{L^\infty (\Gamma)} v \leq  \norm{c_-}_{L^\infty (\Gamma)} M \leq \dfrac{\partial \phi}{\partial \nu^a}
	$$
	if we choose $c_1 > d_\s\norm{c_-}_{L^\infty (\Gamma)}$\,.
	
	Thus, $v-\phi$ is $\s$-subharmonic in $B^+_R(x_k) \cap  \Omega_+$ and has a nonpositive flux on $\partial_0 B^+_R(x_k) \cap \Gamma_+$. In addition, $v-\phi \leq v \leq 0$ in $B_R^+(x_k)\cap (\partial  \Omega_+ \setminus \Gamma_+)$. Therefore, its positive part $(v-\phi)_+$ extended to be zero in $B_R^+(x_k)\setminus  \Omega_+ $ is a continuous function which is $\s$-subharmonic in $B^+_R(x_k)$ and has a nonpositive flux on $\partial_0 B^+_R(x_k)$, both properties in a weak sense.
	
	We define $w := M -(v-\phi)_+$, which is a continuous nonnegative function and satisfies in a weak sense
	$$
	\beqc{\PDEsystem}
	-\div(\lambda^a \nabla w) &\geq & 0 & \text{ in } B^+_R(x_k) \,, \\
	\dfrac{\partial w}{\partial \nu^a} & \geq & 0 & \text{ on } \partial_0 B^+_R(x_k)  \,.
	\eeqc
	$$
	Hence, $w$ fulfills the hypotheses of Proposition~\ref{Prop:WeakHarnack} and thus \eqref{Eq:WeakHarnack} holds. As a consequence, if we take $R = 2\,R_a( \Omega,\Gamma, \theta)$ and $p$ as in \eqref{Eq:WeakHarnack}, we have
	\begin{align*}
	\theta^{1/p} M & \leq \left (  \dfrac{|B^+_{R/2}(x_k)\setminus   \Omega|_a}{|B^+_{R/2}(x_k)|_a}  M^p \right)^{1/p} \\
	& \leq \left (  \dfrac{|B^+_{R/2}(x_k)\setminus  \Omega_+|_a}{|B^+_{R/2}(x_k)|_a}  M^p \right)^{1/p} \\
	&= \dfrac{1}{|B^+_{R/2}(x_k)|_a^{1/p}}  \left (  \int_{B^+_{R/2}(x_k)\setminus  \Omega_+} \lambda ^a M^p \d x \d \lambda  \right)^{1/p} \\
	&\leq |B_1^+|_a^{-1/p} (R/2)^{- \frac{n+1+a}{p}} \left (  \int_{B^+_{R/2}(x_k)} \lambda ^a w^p \d x \d \lambda  \right)^{1/p} \\
	&\leq 2^{\frac{n+1+a}{p}}|B_1^+|_a^{-1/p} C_h \inf_{B^+_{R/4}(x_k)} w \\
	& \leq 2^{\frac{n+1+a}{p}}|B_1^+|_a^{-1/p} C_h \,  w(x_k).
	\end{align*}
	Here we have used the definition of $R_a( \Omega,\Gamma,\theta)$, the fact that $w \equiv M$ in $B^+_R(x_k)\setminus \Omega_+$, the scaling properties of $|\cdot |_a$ and the weak Harnack inequality \eqref{Eq:WeakHarnack}. 
	
	Now, if $c_1 \overline{c}_{n,\s} R^{2\s} \leq 1/2$, then $w(x_k) = M - v(x_k) + \phi(x_k)$ for $k$ large enough. Therefore, for such indices $k$ we conclude
	\begin{align*}
	\theta^{1/p} M &\leq  2^{\frac{n+1+a}{p}}|B_1^+|_a^{-1/p} C_h \{M - v(x_k) + \phi(x_k) \} \\
	& \leq 2^{\frac{n+1+a}{p}}|B_1^+|_a^{-1/p} C_h \{1/k + c_1  \overline{c}_{n,\s} R^{2\s} \}M\,.
	\end{align*}
	Hence, if we take $R_a( \Omega,\Gamma, \theta)$ small enough such that $c_1 \overline{c}_{n,\s} (2 R_a( \Omega,\Gamma, \theta))^{2\s} < 1$ and $ 2^{\frac{n+1+a}{p}} |B_1^+|_a^{-1/p} C_h c_1 \overline{c}_{n,\s} (2 R_a( \Omega,\Gamma, \theta))^{2\s} < \theta^{1/p}$, we get that
	$$
	M \bpar{1- \dfrac{C}{k}}\leq 0
	$$
	for some positive constant $C$ independent of $k$. Letting $k\to +\infty$, this leads to $M \leq 0$, which contradicts \eqref{Eq:PositiveSup}. 
	
	Therefore, our initial assumption stating $ \Omega_+ \neq \varnothing$ is false. This means that $v\leq 0$ in $\Omega$.
\end{proof}

\begin{remark}
	\label{Remark:WeakMPNarrow}
	It will be useful later to note that Proposition~\ref{Prop:MaxPrincipleNarrow} (and as a consequence, Proposition~\ref{Prop:MaxPrincipleLinearizedOperator}) is also valid not requiring $v$ to be $C^2$ in the whole $\Omega$. Indeed, we only need to assume that $v\in C(\Omega)$, that the equation $\div (\lambda^a \nabla v) \leq b(x,\lambda)v$ holds pointwise where $v$ is regular, and that $v$ cannot have a local maximum at a nonregular point.
	
	This will be used in the proof of Proposition~\ref{Prop:SaddleUnderLayer} with $v = u- CU$ in $\Omega = \ocal \times (0,+\infty)$, where $u$ is a saddle-shaped solution, $U$ is defined by \eqref{Eq:DefULayer}, and $C$ is a positive constant. Note that $U$ is Lipschitz but not $C^2$ across $\{t=0, \lambda\geq 0\}$. Therefore, as we will see in Section~\ref{Sec:Layer}, $U$ is only $\s$-superharmonic (pointwise) in $\Omega \setminus \{t=0, \lambda\geq 0\}$. Nevertheless, by this remark, Proposition~\ref{Prop:MaxPrincipleNarrow} will hold in this case thanks to the fact that the graph of $v = u-CU$ in its nonregular points makes the ``good angle'' for the maximum principle to hold (see the proof of Proposition~\ref{Prop:SaddleUnderLayer} for the details).
\end{remark}

As a consequence of Proposition~\ref{Prop:MaxPrincipleNarrow}, next we establish that the maximum principle holds in pairs $(\Omega, \Gamma)$ with $ \Omega \subset \ocal \times (0, +\infty) \subset \R^{2m+1}_+$ and $\Gamma \subset \partial_0  \Omega$ lying in an $\varepsilon$-neighborhood of the Simons cone.

\begin{corollary}
	\label{Cor:MaxPrincipleNarrowSaddle}
	Let $\Omega\subset \ocal \times (0, +\infty) \subset \R^{2m + 1}_+$ and let $\Gamma \subset \partial_0 \Omega$ be nonempty. Assume that $\Gamma \subset \ncal_\varepsilon := \{t < s < t+\varepsilon, \ \lambda= 0 \}$. 
	
	Then, if $\varepsilon$ is small enough, depending only on $n$, $\s$, and $\norm{c_-}_{L^\infty(\Gamma)}$, the maximum principle holds in $\Omega$ in the sense of Proposition~\ref{Prop:MaxPrincipleNarrow}. That is, if $v \in C^2 (\Omega)\cap C(\overline{\Omega})$ is bounded from above, $\lambda^a v_\lambda \in C (\overline{\Omega})$, and $v$ satisfies \eqref{Eq:MaxPrincipleNarrow}, then $v\leq 0$ in $\Omega$.
\end{corollary}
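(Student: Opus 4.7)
The plan is to apply Proposition~\ref{Prop:MaxPrincipleNarrow} to $(\Omega, \Gamma)$ with the cone $E := \ocal^c = \{s < t\}$, which is open, nonempty, and satisfies $(E \times (0,+\infty)) \cap \Omega = \varnothing$ because $\Omega \subset \ocal \times (0,+\infty)$. What remains is to verify the quantitative narrowness condition $R_a(\Omega, \Gamma, \theta) \leq R^*$ for some fixed $\theta \in (0,1)$ and the corresponding radius $R^* = R^*(n, a, \theta, \norm{c_-}_{L^\infty(\Gamma)})$ produced by Proposition~\ref{Prop:MaxPrincipleNarrow}; once this is done, the desired maximum principle in $\Omega$ follows directly from that proposition.

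The core step, and the main technical obstacle, is the following purely geometric estimate in $\R^n$: there exist universal positive constants $c_0, \kappa$ depending only on $n$ such that
$$
|B_r(x) \cap \ocal^c| \geq c_0 \, r^n \quad \text{whenever } \dist(x, \ccal) \leq \kappa \, r.
$$
By the dilation invariance of $\ocal^c$ it suffices to prove this for $r = 1$. I would argue by contradiction and compactness. Suppose $\{x_k\}$ satisfies $\dist(x_k, \ccal) \to 0$ and $|B_1(x_k) \cap \ocal^c| \to 0$. If $\{x_k\}$ is bounded, extract $x_k \to x_\infty \in \ccal$; then $B_1(x_\infty)$ is a neighborhood of a boundary point of the open set $\ocal^c$, so $|B_1(x_\infty) \cap \ocal^c| > 0$, contradicting the convergence. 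If $|x_k| \to \infty$, let $q_k \in \ccal$ be the point closest to $x_k$; since $\ccal$ is a smooth hypersurface at $q_k$ with principal curvatures of order $1/|q_k|$, within $B_1(x_k)$ the cone deviates from its tangent hyperplane at $q_k$ by at most $O(1/|q_k|) \to 0$, forcing $|B_1(x_k) \cap \ocal^c| \to |B_1|/2$, again a contradiction. The unbounded case is the delicate part, and uses that $\ccal$ is smooth away from the origin with decaying second fundamental form, rather than merely a topological separating cone.

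With the Lebesgue bound in hand, the weighted estimate follows by slicing $B^+_R(x)$ along level sets of $\lambda$. For $x$ with $\dist(x, \ccal) \leq (\kappa/2) R$, the bound $|B_{\sqrt{R^2 - \lambda^2}}(x) \cap \ocal^c| \geq c_0 (R^2 - \lambda^2)^{n/2}$ holds for $\lambda \in [0, R\sqrt{3}/2]$, and integration against $\lambda^a \d \lambda$ together with $|B^+_R(x)|_a = C_{n,a} R^{n+1+a}$ gives
$$
|B^+_R(x) \cap (\ocal^c \times (0,+\infty))|_a \geq \theta \, |B^+_R(x)|_a
$$
for some $\theta = \theta(n, a) > 0$. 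Fix this $\theta$, let $R^*$ be the associated radius from Proposition~\ref{Prop:MaxPrincipleNarrow}, and choose $\varepsilon$ so small that $\varepsilon/\sqrt{2} \leq (\kappa/2) R^*$. Then every $x \in \Gamma \subset \ncal_\varepsilon$ satisfies $\dist(x, \ccal) < \varepsilon/\sqrt{2} \leq (\kappa/2) R^*$, and since $B^+_{R^*}(x) \setminus \Omega \supset B^+_{R^*}(x) \cap (\ocal^c \times (0,+\infty))$, we conclude $R_a(\Omega, \Gamma, \theta) \leq R^*$, completing the application of Proposition~\ref{Prop:MaxPrincipleNarrow}.
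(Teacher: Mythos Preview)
Your argument is correct and follows the same high-level plan as the paper --- apply Proposition~\ref{Prop:MaxPrincipleNarrow} after checking the narrowness condition for $(\Omega,\Gamma)$ --- but the geometric core is handled differently. The paper does not argue by compactness or flattening: instead it proves an exact symmetry statement (its Lemma~\ref{Lemma:HalfBallSimonsCone}) saying that for every $x_0\in\ccal$ and every $r>0$ one has $|B_r(x_0)\cap\ocal| = |B_r(x_0)\setminus\ocal| = \tfrac12|B_r(x_0)|$, via a short isometry argument specific to the Simons cone. For $x\in\Gamma$ one then takes $\overline{x}\in\ccal$ realizing $\dist(x,\ccal)<\varepsilon/\sqrt{2}$, includes $B^+_{\varepsilon/4}(\overline{x})\setminus(\ocal\times(0,+\infty))$ in $B^+_\varepsilon(x)\setminus\Omega$, applies the half--half split at each level $\{\lambda=\lambda_0\}$, and reads off the explicit value $\theta = 2^{-4m-3-2a}$ with $R_a(\Omega,\Gamma,\theta)\leq\varepsilon$. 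Your compactness-plus-flattening route is softer and gives a non-explicit $\theta$, but it has the advantage of applying to any cone that is a smooth hypersurface away from the vertex; the paper's reflection trick is shorter and yields explicit constants but is tailored to the structure of $\ccal$.
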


To prove it, it is enough to realize that the Simons cone separates every ball centered at a point in the cone into two regions with comparable measure. In fact, it is interesting to note that these two regions have exactly the same measure, as stated next.

\begin{lemma}
	\label{Lemma:HalfBallSimonsCone}
	Let $x_0\in \ccal\subset \R^{2m}$. Then, 
	$$ | B_r(x_0)\cap \ocal| = | B_r(x_0)\setminus \ocal| = \frac{1}{2} |B_r(x_0)| \ \ \textrm{ for all} \ \ r>0\,. $$
\end{lemma}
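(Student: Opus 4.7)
The plan is to construct an explicit measure-preserving involution $T$ of $\mathbb{R}^{2m}$ that fixes $x_0$ and interchanges the two sides of the Simons cone. Since $\ccal$ itself has Lebesgue measure zero (it is contained in a smooth hypersurface away from the origin, and $\{0\}$ is a null set), once we have $T(B_r(x_0)\cap \ocal) = B_r(x_0)\cap\{s<t\}$ up to a null set, equality of the two measures follows from $T$ being an isometry of $\mathbb{R}^{2m}$ mapping $B_r(x_0)$ onto itself.

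Concretely, write $x_0=(x_0',x_0'')\in\R^m\times\R^m$ and use the condition $x_0\in\ccal$, i.e.\ $|x_0'|=|x_0''|$, to pick a rotation $R\in O(m)$ with $R\,x_0'=x_0''$. I then define the linear map $T:\R^{2m}\to\R^{2m}$ by
\[
T(x',x'')=(R^{-1}x'',\,R\,x'),
\]
which is clearly orthogonal (hence a Lebesgue-measure-preserving isometry), and satisfies $T^2=\mathrm{Id}$. A direct computation gives $T(x_0)=(R^{-1}x_0'',\,R\,x_0')=(x_0',x_0'')=x_0$, so $T(B_r(x_0))=B_r(T(x_0))=B_r(x_0)$. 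Moreover, for any $x=(x',x'')$ one has $|(T x)'|=|R^{-1}x''|=|x''|$ and $|(T x)''|=|R\,x'|=|x'|$, so $T$ interchanges the variables $s$ and $t$. In particular $T$ sends $\ocal=\{s>t\}$ bijectively onto $\{s<t\}=\R^{2m}\setminus(\ocal\cup\ccal)$.

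Combining these facts, $T$ restricts to a measure-preserving bijection from $B_r(x_0)\cap\ocal$ onto $B_r(x_0)\setminus(\ocal\cup\ccal)$, so these two sets have the same Lebesgue measure. Since $|B_r(x_0)\cap\ccal|=0$, I conclude
\[
|B_r(x_0)\cap\ocal|=|B_r(x_0)\setminus\ocal|=\tfrac12|B_r(x_0)|.
\]

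There is no real obstacle here; the only point that needs mild care is the existence of the rotation $R\in O(m)$ with $R x_0'=x_0''$, which is standard (if $x_0'=x_0''=0$ take $R=\mathrm{Id}$, otherwise use any orthogonal map sending the unit vector $x_0'/|x_0'|$ to $x_0''/|x_0''|$, e.g.\ a reflection across the bisecting hyperplane). The whole argument is soft: construct the symmetry, check it is an isometry fixing $x_0$ and swapping the two sides of $\ccal$, and invoke that $\ccal$ is a null set.
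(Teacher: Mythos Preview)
Your proof is correct and follows essentially the same approach as the paper: both hinge on choosing $R\in O(m)$ with $R x_0'=x_0''$ and combining it with the coordinate swap $(x',x'')\mapsto(x'',x')$ to produce an isometry that exchanges $\ocal$ and its complement while fixing the ball $B_r(x_0)$. Your single involution $T(x',x'')=(R^{-1}x'',Rx')$ is precisely the conjugate $\overline{R}^{-1}S\overline{R}$ of the paper's two maps, so your presentation is a slightly more streamlined packaging of the same idea.
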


This result was stated in \cite{Cabre-Saddle}, but without a proof. For the sake of completeness, we include here a simple one.

\begin{proof}[Proof of Lemma~\ref{Lemma:HalfBallSimonsCone}]
	First, let us call $\ical := \R^{2m}\setminus \overline{\ocal}$. Since $x_0\in \ccal$, we have that $x_0 = (x_0',x_0'')\in \R^m \times \R^m$ satisfies that $|x_0'|=|x_0''|$. Therefore, there exists an orthogonal transformation $R\in O(m)$ such that $R  x_0' = x_0''$. Let us define $\overline{R}:\R^{2m}\to\R^{2m}$ by $\overline{R}(x',x'') = (R x',x'')$, which is a linear isometry that keeps invariant $\ocal$ and $\ical$. With these properties it is easy to check that for every $y\in \R^{2m}$ it holds
	\begin{equation}
	\label{Eq:IsometrySimonsCone1}
	|B_r(y)\cap \ical| = |\overline{R} \left( B_r(y)\cap \ical\right)| = | B_r(\overline{R}y) \cap \ical |\,,
	\end{equation}
	and the same replacing $\ical$ with $\ocal$.
	
	On the other hand, let us define $S:\R^{2m}\to\R^{2m}$ by $S(x',x'') = (x'',x')$, which is also a linear isometry and transforms $\ocal$ into $\ical$ and vice versa. Therefore, for every $y\in \R^{2m}$ we have
	\begin{equation}
	\label{Eq:IsometrySimonsCone2}
	|B_r(y)\cap \ical| = |S \left( B_r(y)\cap \ical\right)| = | B_r(Sy) \cap \ocal |\,.
	\end{equation}
	
	Finally, note that by the definition of $\overline{R}$, it is satisfied $S\overline{R} x_0 = \overline{R} x_0$. By combining this with \eqref{Eq:IsometrySimonsCone1} and \eqref{Eq:IsometrySimonsCone2} applied to $y=x_0$ and $y=\overline{R}x_0$ respectively, we obtain
	$$ |B_r(x_0)\cap \ical| = | B_r(\overline{R}x_0) \cap \ical | = | B_r(S\overline{R}x_0) \cap \ocal | = | B_r(\overline{R}x_0) \cap \ocal | = |B_r(x_0)\cap \ocal|\,. $$
\end{proof}

With this lemma available we proceed with the proof of Corollary~\ref{Cor:MaxPrincipleNarrowSaddle}.

\begin{proof}[Proof of Corollary~\ref{Cor:MaxPrincipleNarrowSaddle}]
	Note that $\R^{2m} \setminus \overline{\ocal}$ is an open cone outside $\ocal$, and thus $\{ (\R^{2m} \setminus \overline{\ocal}) \times (0, +\infty) \} \cap \Omega $ is empty. Hence, we can use Proposition~\ref{Prop:MaxPrincipleNarrow} by noticing that,  if we take $\theta = 2^{-4m - 3-2a}$, then $R_a(\Omega,\Gamma,\theta)\leq \varepsilon$. Indeed, recall first that by Lemma~4.2 in \cite{CabreTerraI}, $|s-t|/\sqrt{2}$ is the distance to the cone. Then, let $x\in \Gamma$ and let $\overline{x}\in \ccal$ a point realizing this distance. Since $x\in \Gamma \subset \ncal_\varepsilon$, we have that $|x-\overline{x}| \leq \varepsilon /\sqrt{2} < 3\varepsilon /4$ and therefore
	$$
	B_{\varepsilon/4}^+ (\overline{x})\setminus \big( \ocal \times (0, +\infty)\big) \subset B_{\varepsilon/4}^+ (\overline{x})\setminus \Omega \subset B_\varepsilon^+ (x)\setminus \Omega\,.
	$$
	Hence, by the scaling properties of $|\cdot|_a$ and Lemma~\ref{Lemma:HalfBallSimonsCone} ---used at each level $\{\lambda = \lambda_0\}$, with $\lambda_0\in (0,\varepsilon/4)$---, we have
	$$
	2^{-4m - 3-2a} |B^+_\varepsilon (x)|_a = \dfrac{1}{2} |B_{\varepsilon/4}^+(\overline{x}) |_a = |B_{\varepsilon/4}^+ (\overline{x})\setminus \big( \ocal \times (0, +\infty)\big)  |_a \leq |B_\varepsilon^+ (x)\setminus \Omega |_a\,.
	$$
\end{proof}

With this result at hand we can now establish the maximum principle for the linearized operator in $\ocal\times (0,+\infty)$ at a saddle-shaped solution.

\begin{proof}[Proof of Proposition~\ref{Prop:MaxPrincipleLinearizedOperator}]
	Let $u$ be a saddle-shaped solution. A key point in the proof is that $u$ is a positive supersolution in $\ocal\times(0,+\infty)$ of the linearized problem at $u$. Indeed, since $u>0$ in $\partial_0\Omega \subset \ocal$,
	\begin{equation}
	\label{Eq:uSupersolLinearized}
	\mathscr{L}_u u = d_\s \dfrac{\partial u}{\partial \nu^a}  -f'(u) u = f(u) - f'(u) u > 0\quad \text{ on } \partial_0\Omega \,.
	\end{equation}
	We have used that since $f''<0$ in $(0,1)$ and $f(0)=0$, it satisfies $f'(\tau)\tau < f(\tau)$ for all $\tau\in (0,1)$.

	Now, we define
	$$
	w := \dfrac{v}{u}\,.
	$$
	Note that $w$ is well defined in $\Omega$, since $u$ is positive in such set. The usual strategy (see \cite{BerestyckiNirembergVaradhan}) in some proofs of the maximum principle is to assume that the supremum of $w$ in $\Omega$ is positive and then arrive at a contradiction.  Nevertheless, a priori we do not know that $\sup_\Omega w< +\infty$, since $u$ vanishes on $\ccal\times [0,+\infty)$ and $\partial \Omega$ could intersect this set. Thus, in the following arguments we will consider the supremum of $w$ in a subset of $\partial_0 \Omega$ that is at a positive distance to the zero level set of $u$. Then, using the maximum principle in ``extension-narrow'' pairs we will see that, assuming this supremum to be positive, it will indeed agree with the supremum in the whole set $\Omega$ (see the details below). After some arguments, we will arrive at a contradiction. A similar strategy was used by Cabr\'e in \cite{Cabre-Saddle}, to prove an analogous maximum principle in the local case $\s = 1$.

	Les us proceed with the details. For $\varepsilon > 0$, set
	$$
	\ocal_\varepsilon := \{t +\varepsilon < s,\ \lambda = 0 \} \quad \textrm{ and } \quad \mathcal{N}_\varepsilon := \{t < s < t+\varepsilon ,\ \lambda = 0 \}\,,
	$$
	and take $\varepsilon$ small enough such that for each set $\Gamma \subset \partial_0 \Omega$ satisfying $\Gamma \subset \mathcal{N}_\varepsilon$, the pair $(\Omega, \Gamma)$  is ``extension-narrow''. Hence, the maximum principle, as in Corollary~\ref{Cor:MaxPrincipleNarrowSaddle}, holds for the pair $(\Omega, \Gamma)$. 
	
	Next, we claim that
	\begin{equation}
	\label{Eq:u>delta}
	u \geq \delta >0\ \ \text{ in }   \ocal_\varepsilon
	\end{equation}
	for some positive constant $\delta$. Indeed, thanks to the asymptotic behavior of $u$ (see part (ii) of Theorem~\ref{Th:Summary}), and since $U(x) \geq u_0(\varepsilon/\sqrt{2})$ for $x\in \ocal_\varepsilon$,  there exists a radius $R>0$ such that $u(x) \geq u_0(\varepsilon/\sqrt{2})/2$ if $|x|>R$ and $x\in \ocal_\varepsilon$. Since $u$ is positive in the compact set $\overline{\ocal_\varepsilon} \cap \overline{B_R}$, we conclude the claim.
	
	We define
	$$
	\Gamma := \partial_0 \Omega \cap \ncal_\varepsilon\,,
	$$
	and let
	$$
	S := \sup_{\partial_0 \Omega \cap \ocal_\varepsilon } w\,,
	$$
	which is finite by the fact that $u$ is bounded from below  by $\delta>0$ in $\ocal_\varepsilon$ and $v$ is bounded from above. Assume by contradiction that $S > 0$.
	
	First, we claim that $S = \sup_\Omega w$. To see this, we only need to show that $w \leq S$ in $\Omega$. Define $\varphi := v -Su$ and note that since $S\geq 0$, $\varphi$ satisfies 
	$$
	\beqc{\PDEsystem}
	-\div(\lambda^a \nabla \varphi) &\leq& b(x,\lambda) \varphi & \text{ in } \Omega \,, \\
	\dfrac{\partial \varphi}{\partial \nu^a}  &\leq & c(x) \varphi & \text{ on } \Gamma \,, \\
	\varphi & \leq & 0 & \text{ on } \partial \Omega \setminus \Gamma  \,,
	\eeqc
	$$
	with $c(x) = f'(u) / d_\s$. By the maximum principle in the ``extension-narrow'' pair $(\Omega,\Gamma)$, we have $\varphi \leq 0$ in $\Omega$, which yields $w = v/u \leq S$ in $\Omega$. Thus, the claim is proved.
	
	Now, by the hypothesis on $\partial_L \Omega$ and at infinity on $v$, and the fact that $u>\delta$ in $\ocal_{\varepsilon}$, we have that $S$ is attained at some point  $(x_0,0)\in\partial_0\Omega \subset \ocal$. At this point we have
	\begin{equation}
	\label{Eq:NormalDerivativeAtMaximum}
	\dfrac{\partial w}{\partial \nu^a}(x_0) = -\lim_{\lambda \downarrow 0} \lambda^a w_\lambda (x_0,\lambda) = \lim_{\lambda \downarrow 0} \dfrac{w(x_0,0) - w(x_0, \lambda)}{\lambda^{2\s}} \geq 0\,,
	\end{equation}
	since $w(x_0,0)$ is the maximum.
	
	On the other hand, observe that
	$$
	d_\s u^2 \dfrac{\partial w}{\partial \nu^a} = d_\s  \dfrac{\partial v}{\partial \nu^a} u  - d_\s  \dfrac{\partial u}{\partial \nu^a} v = u \mathscr{L}_u v  -  v \mathscr{L}_u u \leq -v \mathscr{L}_u u \quad \text{ on } \partial_0 \Omega \subset \ocal\,,
	$$
	since $u>0$ in $\ocal$ and $\mathscr{L}_u v \leq 0$ in $\partial_0 \Omega $. Therefore, at the point $x_0$ we have, using also \eqref{Eq:uSupersolLinearized}, 
	$$
	\dfrac{\partial w}{\partial \nu^a}(x_0) \leq -\dfrac{S}{d_\s u(x_0)} \mathscr{L}_u u(x_0) < 0\,,
	$$
	which contradicts \eqref{Eq:NormalDerivativeAtMaximum}. Note that in this last argument is crucial the fact that $x_0 \in \partial_0 \Omega \subset \ocal$ and thus $u(x_0)>0$ and $\mathscr{L}_u u (x_0) > 0$. 
	
	Hence, the assumption $S>0$ is false and therefore $w \leq 0$ in $\partial_0 \Omega \cap \ocal_\varepsilon $. Since $u > 0$ in $\ocal$, this yields that $v \leq 0$ in $\partial_0 \Omega \cap \ocal_\varepsilon $. Finally, by the maximum principle in the ``extension-narrow'' pair $(\Omega, \Gamma)$ applied to $v$, it follows that $v\leq 0$ in $\Omega$.
\end{proof}

\section{Uniqueness of the saddle-shaped solution}
\label{Sec:Uniqueness}
Thanks to the maximum principle in $\ocal\times (0,+\infty)$ for the linearized operator we can now establish the uniqueness of the saddle-shaped solution.

\begin{proof}[Proof of Theorem~\ref{Thm:Uniqueness}]
	Let $u_1$ and $u_2$ be two saddle-shaped solutions. Define $v := u_1 - u_2$, a function that depends only on $s$ and $t$ and that is odd with respect to $\ccal$. Then, $\div(\lambda^a \nabla v) = 0$ in $\ocal \times (0,+\infty)$, $v=0$ on $\partial_L \left( \ocal \times (0,+\infty) \right) = \ccal \times [0,+\infty)$ and
	$$
	d_\s \dfrac{\partial v}{\partial \nu^a} = f(u_1) - f(u_2) \leq f'(u_2) (u_1 - u_2) = f'(u_2) v \quad \textrm{ on } \ocal \times \{0\}\,,
	$$
	since $f$ is concave in $(0,1)$. Moreover, by the asymptotic result (see Theorem~\ref{Th:Summary}), we have
	$$
	\limsup_{x \in \ocal,\ |x|\to +\infty} v(x, 0) = 0\,.
	$$
	
	Finally, by the maximum principle for the linearized operator in $\ocal\times (0,+\infty)$, see Proposition~\ref{Prop:MaxPrincipleLinearizedOperator}, we deduce that $v \leq 0$ in $\ocal \times [0, +\infty)$, which yields $u_1 \leq u_2$ in $\ocal \times [0, +\infty)$. Interchanging $u_1$ and $u_2$, we obtain $u_1 \geq u_2$ in $\ocal \times [0, +\infty)$. Therefore, $u_1 = u_2$ in $\R^{2m+1}_+$.
\end{proof}

\section{The layer solution and a pointwise estimate for the saddle-shaped solution}
\label{Sec:Layer}

This section is devoted to establish some monotonicity properties of the layer solution $u_0$ and a pointwise estimate for the saddle-shaped solution (Proposition~\ref{Prop:SaddleUnderLayer}). We start with a maximum principle similar to Proposition~\ref{Prop:MaxPrincipleLinearizedOperator}, but for the linearized operator at $u_0$ in the set $\{u_0 > 0\}$, which plays the role that $\ocal\times(0,+\infty)$ had for the saddle-shaped solution.

\begin{proposition}
	\label{Prop:MaxPrincipleLinearizedOperator2D}
	Let $u_0:\overline{\R^2_+}\to \R$ be the layer solution of \eqref{Eq:LayerSolution} and let $\mathscr{L}_{u_0} $ be defined by 
	$$
	\mathscr{L}_{u_0} v := d_\s \dfrac{\partial v}{\partial \nu^a}  -f'(u_0) v\,\ \ \text{ on } \ \R=\partial_0\R^2_+\,.
	$$
	Let $\Omega \subset (0,+\infty) \times (0,+\infty)$ be an open set such that $\partial_0 \Omega$ is nonempty. 
	
	Let $v \in C^2 (\Omega)\cap C(\overline{\Omega})$ be bounded from above and satisfying $\lambda^a v_\lambda \in C (\overline{\Omega})$. Assume that
	$$
	\beqc{\PDEsystem}
	-\div(\lambda^a \nabla v) &\leq& b(x,\lambda) v & \text{ in } \Omega \subset (0, +\infty)\times (0,+\infty)\,, \\
	\mathscr{L}_{u_0} v &\leq & 0 & \text{ on } \partial_0 \Omega \subset (0,+\infty) \,, \\
	v & \leq & 0 & \text{ on } \partial_L \Omega\,,\\
	\ds \limsup_{x\in \partial_0 \Omega,\ |x|\to +\infty} v(x,0) & \leq & 0\,,
	\eeqc
	$$
	with $b \leq 0$. Then, $v\leq 0$ in $\Omega$.
\end{proposition}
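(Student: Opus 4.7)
The plan is to mimic the proof of Proposition~\ref{Prop:MaxPrincipleLinearizedOperator}, replacing the Simons cone $\ccal\subset\R^{2m}$ by the hyperplane $\{x=0\}\subset \R$ and $\ocal\times(0,+\infty)$ by $(0,+\infty)\times(0,+\infty)\subset \R^2_+$. The key structural fact is the same: $u_0$ is a positive strict supersolution of its own linearized operator in $\{u_0>0\}$. Indeed, by \eqref{Eq:LayerSolution} we have $u_0>0$ on $(0,+\infty)$ (since $u_0(0,0)=0$ and $\partial_x u_0>0$), $-\div(\lambda^a \nabla u_0)=0$ in $(0,+\infty)\times(0,+\infty)$, and on $\{x>0\}$ we get $\mathscr{L}_{u_0} u_0 = f(u_0)-f'(u_0)u_0 > 0$ using $f''<0$ in $(0,1)$, $f(0)=0$ and $0<u_0<1$.

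The first step will be to set up the maximum principle in ``extension-narrow'' pairs in this setting. I will invoke Proposition~\ref{Prop:MaxPrincipleNarrow} with $n=1$ and the open cone $E=\{x<0\}$; by assumption $\Omega\subset(0,+\infty)\times(0,+\infty)$, so $(E\times(0,+\infty))\cap\Omega=\varnothing$. For every $\varepsilon>0$ and every $\Gamma\subset \partial_0\Omega$ contained in the strip $\ncal_\varepsilon:=\{0<x<\varepsilon,\ \lambda=0\}$ one has $R_a(\Omega,\Gamma,\theta)\leq \varepsilon$ with, e.g., $\theta=2^{-3-2a}$: this is the analogue of Corollary~\ref{Cor:MaxPrincipleNarrowSaddle}, but now trivial since the hyperplane $\{x=0\}$ cuts each half-ball into two equal halves at every $\lambda$-level (no Lemma~\ref{Lemma:HalfBallSimonsCone} needed). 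Hence for $\varepsilon$ small enough depending on $a$ and $\|c_-\|_{L^\infty(\Gamma)}$, Proposition~\ref{Prop:MaxPrincipleNarrow} applies to such pairs $(\Omega,\Gamma)$.

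The second step is the dichotomy argument using the quotient $w:=v/u_0$. Fix $\varepsilon$ as above and set $\Gamma:=\partial_0\Omega\cap \ncal_\varepsilon$. Since $u_0(\cdot,0)$ is increasing with $u_0(+\infty,0)=1$, there exists $\delta>0$ with $u_0\geq \delta$ on $\{x\geq\varepsilon\}$; combined with the upper bound on $v$ and the hypothesis $\limsup_{|x|\to\infty} v(x,0)\leq 0$, this shows that $S:=\sup_{\partial_0\Omega\cap\{x>\varepsilon\}} w$ is finite. I will argue by contradiction and assume $S>0$. Then $\varphi:=v-Su_0$ satisfies $-\div(\lambda^a\nabla\varphi)\leq b\varphi$ in $\Omega$ (using $b\leq 0$ and $S\geq 0$), $\partial_{\nu^a}\varphi \leq (f'(u_0)/d_\s)\varphi$ on $\Gamma$, and $\varphi\leq 0$ on $\partial\Omega\setminus\Gamma$ (and at infinity); the extension-narrow maximum principle gives $\varphi\leq 0$ in $\Omega$, i.e.\ $S=\sup_\Omega w$.

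The final step is the Hopf-type contradiction at a maximum point. By the asymptotic hypothesis on $v$ and the condition on $\partial_L\Omega$, the supremum $S$ is attained at some $(x_0,0)\in\partial_0\Omega$ with $x_0>\varepsilon$, hence $u_0(x_0,0)\geq\delta>0$. Since $(x_0,0)$ is a maximum of $w$,
\[
\frac{\partial w}{\partial\nu^a}(x_0) = \lim_{\lambda\downarrow 0}\frac{w(x_0,0)-w(x_0,\lambda)}{\lambda^{2\s}}\geq 0.
\]
On the other hand, the algebraic identity
\[
d_\s\, u_0^2\,\frac{\partial w}{\partial\nu^a} = u_0\,\mathscr{L}_{u_0} v - v\,\mathscr{L}_{u_0} u_0
\]
combined with $\mathscr{L}_{u_0} v\leq 0$ on $\partial_0\Omega$, $u_0(x_0,0)>0$, $v(x_0,0)=S\,u_0(x_0,0)>0$, and $\mathscr{L}_{u_0} u_0(x_0)>0$ yields $\partial_{\nu^a} w(x_0)<0$, a contradiction. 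Therefore $S\leq 0$, so $v\leq 0$ on $\partial_0\Omega\cap\{x>\varepsilon\}$; a last application of the extension-narrow maximum principle to $v$ on $(\Omega,\Gamma)$ gives $v\leq 0$ in $\Omega$. The main (and essentially only nontrivial) point is exactly the transfer of the extension-narrow framework to this one-dimensional boundary setting; everything else is a direct transcription of the argument used for the saddle solution.
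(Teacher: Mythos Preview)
Your proof is correct and follows essentially the same approach as the paper: the paper's own proof simply says to repeat the argument of Proposition~\ref{Prop:MaxPrincipleLinearizedOperator} with $u$ replaced by $u_0$, $\ocal_\varepsilon$ by $(\varepsilon,+\infty)$, and $\ncal_\varepsilon$ by $(0,\varepsilon)$, noting that the lower bound $u_0\geq\delta$ on $\{x\geq\varepsilon\}$ is immediate from monotonicity---exactly the structure you lay out. Your observation that Lemma~\ref{Lemma:HalfBallSimonsCone} becomes trivial here (the hyperplane $\{x=0\}$ bisects half-balls exactly) is the only simplification, and the paper alludes to this by calling the 2D case ``simpler''.
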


\begin{proof}
	Since it is analogous (and simpler) to the proof of Proposition~\ref{Prop:MaxPrincipleLinearizedOperator}, we just sketch it here pointing out what needs to be adapted. The key fact is that $u_0$ is a positive supersolution to the linearized problem. This is an analogous situation to that of Proposition~\ref{Prop:MaxPrincipleLinearizedOperator} . That is, $u_0$ is $\s$-harmonic in $(0,+\infty)\times (0,+\infty)$, positive in $(0,+\infty)\times [0,+\infty)$, and
	\begin{equation}
	\label{Eq:u0SupersolLinearized}
	d_\s \dfrac{\partial u_0}{\partial \nu^a}  = f(u_0) > f'(u_0) u_0 \quad \textrm{ on } (0, +\infty)\times \{0\}\,,
	\end{equation}
	where we have used that $f''<0$ in $(0,1)$ and $f(0)=0$.

	Then, one defines $w := v/u_0$ and proceeds exactly as in the proof of Proposition~\ref{Prop:MaxPrincipleLinearizedOperator}, replacing  $u$ by $u_0$ in the whole argument, and also replacing  $\ocal_\varepsilon$ and $\ncal_\varepsilon$ by $(\varepsilon,+\infty)$ and $(0,\varepsilon)$ respectively. In addition, \eqref{Eq:u>delta} follows immediately from the fact that $u_0(x,0)$ is increasing. The rest of the proof is completely analogous by using \eqref{Eq:u0SupersolLinearized}.
\end{proof}

With this maximum principle we can now prove the following monotonicity and concavity properties of the layer solution.

\begin{lemma}
	\label{Lemma:MonotonicityLayer}
	Let $u_0$ be the layer solution of \eqref{Eq:LayerSolution}. Then,
	$$ \frac{\partial}{\partial x} u_0(x,\lambda) > 0 \ \ \ \text{in} \ \ \ \R\times [0,+\infty) $$
	and
	$$ \frac{\partial^2}{\partial x^2} u_0(x,\lambda) < 0 \ \ \ \text{in} \ \ \ (0,+\infty)\times [0,+\infty)\,. $$
\end{lemma}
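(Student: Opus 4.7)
The plan is to derive both statements from the maximum principle machinery for the linearized operator, namely Proposition~\ref{Prop:MaxPrincipleLinearizedOperator2D}, combined with standard maximum principles for the degenerate elliptic operator $\div(\lambda^a\nabla\,\cdot\,)$.

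For the monotonicity, I would set $v := \partial_x u_0$. Differentiating the bulk PDE in \eqref{Eq:LayerSolution} with respect to $x$ gives $\div(\lambda^a \nabla v) = 0$ in $\R^2_+$, while the definition of the layer solution forces $v(\cdot,0) > 0$ on all of $\R$; moreover $v(x,0) \to 0$ as $|x|\to +\infty$ since $u_0(\cdot,0)\to \pm 1$. The weak maximum principle for $\div(\lambda^a\nabla\,\cdot\,)$ then forces $v \geq 0$ in $\overline{\R^2_+}$, and the strong maximum principle upgrades this to $v > 0$ everywhere (otherwise $v$ would vanish identically on a connected component of $\R^2_+$ whose closure hits $\{\lambda = 0\}$, contradicting the strict positivity of the trace there).

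For the concavity, I would set $w := \partial_{xx} u_0$ and work in $\Omega := (0,+\infty)\times(0,+\infty)$. Differentiating the bulk equation twice in $x$ gives $\div(\lambda^a \nabla w) = 0$ in $\R^2_+$, and differentiating the boundary condition $d_\s \partial_{\nu^a} u_0 = f(u_0)$ twice yields
\begin{equation*}
\mathscr{L}_{u_0} w \;=\; d_\s \dfrac{\partial w}{\partial \nu^a} - f'(u_0)\,w \;=\; f''(u_0)\, (\partial_x u_0)^2 \qquad \text{on } \R \times \{0\}.
\end{equation*}
On $\partial_0 \Omega = (0,+\infty)\times\{0\}$ the just-proved monotonicity, together with $u_0(0,0)=0$ and the limits at $\pm\infty$, gives $u_0 \in (0,1)$, so \eqref{Eq:fHypotheses} yields $f''(u_0) < 0$; combined with $\partial_x u_0 > 0$ this gives $\mathscr{L}_{u_0} w < 0$ strictly. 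By uniqueness of the layer solution applied to $-u_0(-x,\lambda)$, which solves the same problem because $f$ is odd, $u_0$ is odd in $x$, hence $w(0,\lambda) = 0$ on $\partial_L \Omega = \{x = 0,\ \lambda\geq 0\}$. Finally, $w(x,0)\to 0$ as $x\to +\infty$ from the asymptotic behavior of $u_0$. All the hypotheses of Proposition~\ref{Prop:MaxPrincipleLinearizedOperator2D} (with $b\equiv 0$) being met, I conclude $w \leq 0$ in $\Omega$.

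To upgrade to the strict inequality $w < 0$ in $(0,+\infty)\times[0,+\infty)$, I would argue as follows. If $w$ vanished at some interior point of $\Omega$, the strong maximum principle for $\div(\lambda^a \nabla\,\cdot\,)$ would force $w \equiv 0$ on the connected component, contradicting $\mathscr{L}_{u_0} w < 0$ on $\partial_0 \Omega$. If instead $w(x_0,0) = 0$ at some $x_0 > 0$, then $(x_0,0)$ would be a boundary maximum of $w$ on $\overline{\Omega}$, and the fractional Hopf-type inequality would give $\partial_{\nu^a} w(x_0) \geq 0$, which contradicts $d_\s \partial_{\nu^a} w(x_0) = f''(u_0(x_0,0)) (\partial_x u_0(x_0,0))^2 < 0$ (using $w(x_0,0) = 0$). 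The most delicate point in this program is justifying the boundary Hopf inequality in the degenerate-weight setting; everything else reduces to a direct application of Proposition~\ref{Prop:MaxPrincipleLinearizedOperator2D} and of symmetry/uniqueness arguments already present in the paper.
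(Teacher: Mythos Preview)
Your approach is essentially the paper's for the concavity statement: both differentiate twice, use oddness of $u_0$ to get $w=0$ on $\{x=0\}$, invoke the decay of $\partial_{xx}u_0(\cdot,0)$ at infinity, apply Proposition~\ref{Prop:MaxPrincipleLinearizedOperator2D} with $b\equiv 0$, and then upgrade via the strong maximum principle. Your treatment of the strict inequality at $\{\lambda=0\}$ is in fact more careful than the paper's one-line ``strong maximum principle'' remark.

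For the monotonicity, however, the paper takes a shorter and cleaner route: since $\partial_x u_0$ is bounded, $a$-harmonic in $\R^2_+$, and has strictly positive trace on $\{\lambda=0\}$, the Poisson representation immediately gives $\partial_x u_0>0$ in the whole half-plane. Your ``weak maximum principle'' step is not quite justified as written: on the unbounded domain $\R^2_+$ the maximum principle for $\div(\lambda^a\nabla\,\cdot\,)$ does not hold without a growth or decay hypothesis as $|(x,\lambda)|\to\infty$ (you control only the behavior of the trace as $|x|\to\infty$, not the behavior as $\lambda\to\infty$). This gap is easily closed either by a Phragm\'en--Lindel\"of argument or, more directly, by the Poisson formula itself, which is what the paper does.
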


\begin{proof}
	First of all, let us remark that $u_0$ has the required regularity to apply the following arguments by the results of \cite{CabreSireI} (see Section~\ref{Sec:Monotonicity} for more details in the more involved setting of the saddle-shaped solution).

	The monotonicity of the first derivative was already stated in Remark~4.7 of \cite{CabreSireI}, but we include here the short proof for completeness. By differentiating \eqref{Eq:LayerSolution} with respect to $x$, we obtain that $\div(\lambda^a \nabla (\partial_x u_0)) = 0$ in $\R\times (0,+\infty)$. Moreover,  $\partial_x u_0(x,0) > 0$ for $x\in \R$; see \eqref{Eq:LayerSolution}. Then, the result follows directly from the Poisson formula.
	
	Next, we show the second statement. If we call 
	$$
	v(x,\lambda) := \partial_{xx} u_0(x,\lambda)\,,
	$$
	by differentiating \eqref{Eq:LayerSolution} twice with respect to $x$, we get
	\begin{equation*}
	\beqc{\PDEsystem}
	\div(\lambda^a \nabla v) &=& 0 & \text{ in } (0,+\infty)\times (0,+\infty) \,, \\
	d_a\,\dfrac{\partial v}{\partial \nu^a} - f'(u_0) v = f''(u_0) (\partial_x u_0) ^2 &\leq & 0 & \text{ on } (0,+\infty) \times \{0\} \,, \\
	v & = & 0 & \text{ on } \{0\} \times (0,+\infty)\,.
	\eeqc
	\end{equation*}
	Notice that $v = 0$ on $\{0\} \times (0,+\infty)$ since $v$ is an odd function with respect to the first variable (recall that $u_0$ is odd in $x$).
	
	Moreover, by repeating the argument of Lemma~4.8 in \cite{CabreSireI} for $\partial_{xx} u_0$, it is easy to see that $\partial_{xx} u_0(x,0) \to 0$ as $|x|\to +\infty$. Therefore, by Proposition~\ref{Prop:MaxPrincipleLinearizedOperator2D} we deduce that $v\leq 0$ in $[0,+\infty)\times [0,+\infty)$. Finally, we get that it is in fact negative in $(0,+\infty)\times [0,+\infty)$ by applying the strong maximum principle.
\end{proof}

Now we prove that the function 
$$
U(s,t,\lambda) := u_0 \left( \frac{s-t}{\sqrt{2}}, \lambda\right)
$$ 
is a barrier for the saddle-shaped solution. To do it, we will use a maximum principle in $\ocal \times (0,+\infty)$ for the linearized problem at $U$.

\begin{proof}[Proof of Proposition~\ref{Prop:SaddleUnderLayer}]
	The idea is to repeat the arguments in the proof of Proposition~\ref{Prop:MaxPrincipleLinearizedOperator}, but using $U$ instead of $u$ as the positive supersolution to the linearized problem involving the operator 
	$$
	\mathscr{L}_U w:= d_\s \dfrac{\partial w}{\partial \nu^a}  -f'(U)w. 
	$$ 
	In order to do it, we need to point out several facts.
	
	First, note that 
	$$
	U\in C^2\big( (\ocal\times (0,+\infty))\setminus \{t=0,\ \lambda > 0\} \big ) \cap \Lip \big( \overline{\R^{2m+1}_+}\big )\,,
	$$
	and $U$ cannot have a local minimum at $\{t=0,\ \lambda \geq 0\}$. Indeed, for every $\lambda \geq 0$,
	$$
	\lim_{\tau\to 0^-} \partial_{x_{m+1}}  U(x_1,...x_m,\tau,0,...,0,\lambda) = \frac{1}{\sqrt{2}}\partial_xu_0\left(\frac{s}{\sqrt{2}},\lambda\right)>0\,, 
	$$
	and
	$$
	\lim_{\tau\to 0^+} \partial_{x_{m+1}} U(x_1,...x_m,\tau,0,...,0,\lambda) = -\frac{1}{\sqrt{2}}\partial_xu_0\left(\frac{s}{\sqrt{2}},\lambda\right) <0\,.
	$$
	Note that the same property concerning a local minimum at $\{t=0,\ \lambda \geq 0\}$ holds if we add to $U$ a regular function.
	
	Next, we claim that $U$ is a positive supersolution in $\ocal$ to the linearized problem for $\mathscr{L}_U$. Indeed, by the concavity of $f$, we have that
	$$
	\mathscr{L}_U U  = f(U) - f'(U)U> 0 \quad \text{ in } \ocal\,.
	$$
	Moreover, a simple computation in the $(s, t, \lambda)$ variables shows that
	\begin{equation}
	\label{Eq:EqForULayer}
	\div (\lambda^a \nabla U) = \lambda^a \dfrac{m-1}{\sqrt{2}} \dfrac{t-s}{st} \partial_x u_0 \bpar{\dfrac{s-t}{\sqrt{2}}, \lambda}  \quad \text{ in } \R^{2m + 1}_+ \setminus \{st=0,\ \lambda >0\}\,.
	\end{equation}
	Therefore, $U$ is $\s$-superharmonic in $ (\ocal \times (0, +\infty) )\setminus \{t=0,\ \lambda > 0\}$ ---recall that $ \partial_x u_0 > 0$ by Lemma~\ref{Lemma:MonotonicityLayer}.

	Now, we define 
	$$ v:= u - U \quad \text{ and } \quad \Omega :=  \ocal \times (0, +\infty)\,,
	$$
	and we want to see that $v\leq 0$ in $\Omega$. First, since $u$ is $\s$-harmonic, we have that 
	$$ 
	-\div (\lambda^a \nabla v) \leq 0 \quad \text{ in }\Omega \setminus \{t=0,\ \lambda > 0\}
	$$
	and that $v$ cannot have a local maximum at $\{t=0,\ \lambda \geq 0\}$. In addition, both $u$ and $U$ vanish at $\ccal \times [0,+\infty)$ and by the asymptotic behavior of $u$ (see Theorem~\ref{Th:Summary}), we have $\lim_{x\in  \ocal,\ |x|\to +\infty} v(x,0) = 0\,$. On the other hand, since $f$ is concave in $(0,1)$, we get 
	$$
	d_\s \dfrac{\partial v}{\partial \nu^a}  = f(u) - f(U) \leq f'(U)v \quad \text{ on } \partial_0 \Omega\,.
	$$

	Collecting all these facts, we can repeat the proof of Proposition~\ref{Prop:MaxPrincipleLinearizedOperator}, using $U$ instead of $u$ as the positive supersolution to the linearized problem for $\mathscr{L}_U$ to see that $v\leq 0$ in $\Omega$. All the arguments are analogous, taking into account Remark~\ref{Remark:WeakMPNarrow} when using the maximum principles in ``extension-narrow'' pairs. Therefore, we conclude that $v\leq 0$ in $\Omega$ and, by the odd symmetry of $u$ and $U$, we get \eqref{Eq:SaddleUnderLayer}.
\end{proof}

\section{Monotonicity properties}
\label{Sec:Monotonicity}

In this section we establish the monotonicity properties of $u$ stated in Proposition~\ref{Prop:MonotonicityProperties}. For this, we will apply the maximum principle of Proposition~\ref{Prop:MaxPrincipleLinearizedOperator} to some derivatives of $u$. Therefore, we need some regularity results that we collect next. 

Recall that we assume that $f\in C^{2,\alpha}$ for some $\alpha\in(0,1)$. Since $u$ is a bounded solution to the first equation in \eqref{Eq:AllenCahnWithExtension}, then $u\in C^\infty(\R^{2m+1}_+)$. Regarding the regularity on $\{\lambda = 0\}$, $u(\cdot,0)\in C^{2,\alpha}(\R^{2m})$ by applying Lemma 4.4 from \cite{CabreSireI}. Moreover, \cite{CabreSireI} also gives the following uniform bound:
$$ 
||u||_{C^\alpha\left(\overline{\R^{2m+1}_+}\right)} + ||\nabla_x u||_{C^\alpha\left(\overline{\R^{2m+1}_+}\right)} + ||D^2_x u||_{C^\alpha\left(\overline{\R^{2m+1}_+}\right)} \leq C, 
$$
for some $C>0$ depending only on $m$, $\s$ , $||f||_{C^{2,\alpha}}$, and $||u||_{L^\infty(\R^{2m+1}_+)}$.

Next, since the horizontal first derivatives of $u$ satisfy $\div(\lambda^a \, \nabla u_{x_i}) = 0$ and also $d_\s \partial_{\nu^a}  u_{x_i} = f'(u) \,u_{x_i} \in C^\alpha(\R^{2m})$, and the  horizontal second derivatives of $u$ satisfy $\div(\lambda^a \, \nabla u_{x_i x_j}) = 0$ and also  $d_\s\partial_{\nu^a} u_{x_i x_j} = f''(u) \,u_{x_i}\,u_{x_j} + f'(u) \, u_{x_i\,x_j}\in C^\alpha(\R^{2m})$ for all indices $i$ and $j$ from $1$ to $2m$, we can apply Lemma 4.5 from \cite{CabreSireI} to obtain that
$$
||\lambda^a \, u_\lambda||_{C^\beta\left(\R^{2m}\times [0,1]\right)} + ||\lambda^a \, (u_{x_i})_\lambda||_{C^\beta\left(\R^{2m}\times [0,1]\right)}  + ||\lambda^a \, (u_{x_i\,x_j})_\lambda||_{C^\beta\left(\R^{2m}\times [0,1]\right)} \!\leq C,
$$
for some $C>0$ and $\beta\in(0,1)$ depending only on $m$, $\s$, $||f||_{C^{2,\alpha}}$, and $||u||_{L^\infty(\R^{2m+1}_+\!)}$.

Now, since $u$ depends only on $s$, $t$ and $\lambda$, from the previous results we obtain
\begin{align*}
u_s \in C^{2,\alpha}(\R_+^{2m+1}\setminus\{s=0,\ \lambda \geq 0\}), \,\, &\,\, \lambda^a\,(u_s)_\lambda \in C^{\alpha}(\overline{\R_+^{2m+1}}\setminus\{s=0,\ \lambda \geq 0\}),\\
u_t \in C^{2,\alpha}(\R_+^{2m+1}\setminus\{t=0,\ \lambda \geq 0\}), \,\, &\,\, \lambda^a\,(u_t)_\lambda \in C^{\alpha}(\overline{\R_+^{2m+1}}\setminus\{t=0,\ \lambda \geq 0\}),\\
u_{st} \in C^{2,\alpha}(\R_+^{2m+1}\setminus\{st=0,\ \lambda \geq 0\}), \,\, &\,\, \lambda^a\,(u_{st})_\lambda \in C^{\alpha}(\overline{\R_+^{2m+1}}\setminus\{st=0,\ \lambda \geq 0\}).
\end{align*}
Furthermore, as it is explained in Section~4 of \cite{Cabre-Saddle}, the regularity and the symmetry of $u$, in $s$ and $t$, yield
\begin{align*}
u_s=0 \,\text{ in } \, \{s=0, \lambda \geq 0\}, \ 
u_t=0 \,\text{ in } \, \{t=0, \lambda \geq 0\},  \
u_{st}=0 \,\text{ in } \, \{st=0, \lambda \geq 0\},
\end{align*}
and
$$ u_s, \ u_t,\ u_{st} \in C(\overline{\R^{2m+1}_+}).  $$

Before proceeding to the proof of Proposition~\ref{Prop:MonotonicityProperties}, we first need the following asymptotic result for the second derivatives in $x$ of $u$. This derivative was not included in the asymptotic theorem of \cite{Cinti-Saddle, Cinti-Saddle2}. We will use it to show that $u_{st}>0$ in $\{s>t>0\}\times [0,+\infty)$.

\begin{lemma}
	\label{Lemma:AsymptoticSecondDerivative}
	Let $f$ satisfy conditions \eqref{Eq:fHypotheses}, and let $u$ be the saddle-shaped solution of \eqref{Eq:AllenCahnWithExtension}. Then, denoting $U(x,\lambda) := u_0((s - t)/\sqrt{2},\lambda) = u_0(z,\lambda)$, we have
	$$ ||D^2_x u(\cdot,\lambda) - D^2_x U(\cdot,\lambda)||_{L^\infty(\R^{2m}\setminus B_R)} \to 0, \ \ \ \text{ as } \ R\to+\infty, $$
	for every $\lambda \in [0,+\infty)$.
\end{lemma}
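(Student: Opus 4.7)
My plan is to argue by contradiction via a translation-and-compactness scheme, bootstrapping from the $C^1$ asymptotic result of Theorem~\ref{Th:Summary}~(ii) to $C^2$ convergence through the uniform horizontal $C^{2,\alpha}$ bounds recalled at the start of Section~\ref{Sec:Monotonicity}. Suppose the statement fails. Then there exist $\lambda_0 \geq 0$, $\delta > 0$, and a sequence $x_k \in \R^{2m}$ with $|x_k| \to \infty$ such that $|D^2_x u(x_k,\lambda_0) - D^2_x U(x_k,\lambda_0)| \geq \delta$ for every $k$. I would introduce the translates $u_k(x,\lambda) := u(x+x_k,\lambda)$ and $U_k(x,\lambda) := U(x+x_k,\lambda)$, and extract subsequences so that $x'_k/|x'_k| \to \omega'$ and $x''_k/|x''_k| \to \omega''$ on $\Sph^{m-1}$, and $z_k := (|x'_k|-|x''_k|)/\sqrt{2} \to z_\infty \in [-\infty,+\infty]$.

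The global $C^{2,\alpha}$ horizontal estimate for $u$ provides uniform $C^{2,\alpha}_x$ control on the $u_k$, so Arzelà-Ascoli yields a subsequential limit $u_\infty$, obtained in $C^2_{\loc}$ for the horizontal derivatives, which still satisfies the bulk equation $\div(\lambda^a\nabla u_\infty)=0$ and the Neumann condition $d_\s\partial_{\nu^a} u_\infty = f(u_\infty)$. For the $U_k$, the elementary expansion $|x'_k + x'| = |x'_k| + \omega'\cdot x' + O(|x'_k|^{-1})$, valid on bounded $x$-sets as soon as $|x'_k|\to\infty$, shows that
\[
\frac{s(x+x_k) - t(x+x_k)}{\sqrt{2}} \;\longrightarrow\; z_\infty + \frac{\omega'\cdot x' - \omega''\cdot x''}{\sqrt{2}} \qquad \text{in } C^{2,\alpha}_{\loc}
\]
away from the singular locus $\{st=0\}$. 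Therefore $U_k \to U_\infty$ in $C^{2,\alpha}_{\loc}$, where $U_\infty(x,\lambda) = u_0\bigl(z_\infty + (\omega'\cdot x' - \omega''\cdot x'')/\sqrt{2},\lambda\bigr)$ when $|z_\infty|<\infty$ and $U_\infty \equiv \pm 1$ when $z_\infty = \pm \infty$ (in which case $D^2_x U_k \to 0$ directly). A direct computation shows that, in either case, $U_\infty$ solves the same extension problem as $u_\infty$, being a planar layer solution in the unit direction $(\omega',-\omega'')/\sqrt{2} \in \R^{2m}$.

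Now part (ii) of Theorem~\ref{Th:Summary} gives $u_k(x,0) - U_k(x,0) \to 0$ and $\nabla_x(u_k - U_k)(x,0) \to 0$ uniformly on bounded $x$-sets. Combined with the $C^2_{\loc}$ convergence just established, this forces $u_\infty(\cdot,0) \equiv U_\infty(\cdot,0)$. Since both $u_\infty$ and $U_\infty$ are bounded $\lambda^a$-harmonic extensions of their common trace, the Poisson representation yields $u_\infty \equiv U_\infty$ in $\R^{2m+1}_+$. Hence $D^2_x u_k - D^2_x U_k \to 0$ in $C^0_{\loc}$, and evaluating at $x=0$ gives $D^2_x u(x_k,\lambda_0) - D^2_x U(x_k,\lambda_0) \to 0$, contradicting the standing assumption that this difference has norm at least $\delta$.

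The main obstacle I anticipate is handling sequences $x_k$ approaching the singular set $\{st=0\}$ as $|x_k|\to +\infty$ --- for instance $|x'_k|\to 0$ while $|x''_k|\to +\infty$. On such sequences $D^2_x U(x_k,\lambda_0)$ need not even be bounded, since $U$ is only Lipschitz across $\{st=0\}$, so the $C^{2,\alpha}_{\loc}$ convergence of $U_k$ breaks down at $x=0$. One must then either interpret the $L^\infty$ conclusion on the set where both second horizontal derivatives make classical sense --- which is precisely the regime needed in the subsequent application to $u_{st}$ in Proposition~\ref{Prop:MonotonicityProperties} --- or exploit the common doubly radial structure of $u$ and $U$ to cancel the $1/s$ and $1/t$ singular terms, reducing the difference $D^2_x(u-U)$ to an expression governed by $u-U$, $u_s - U_s$ and $u_t - U_t$, all of which tend to zero by Theorem~\ref{Th:Summary}~(ii) combined with the polynomial decay of $\partial_z u_0$ at infinity.
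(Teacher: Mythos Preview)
Your proof is correct and follows a genuinely different, more elementary route than the paper. The paper also argues by contradiction and compactness, translating by $x_k$ and splitting into the cases $\dist(x_k,\ccal)\to\infty$ versus $\dist(x_k,\ccal)$ bounded, but to identify the limit $u_\infty$ it invokes the Liouville-type classification results quoted in \cite{Cinti-Saddle2} (Theorems~5.3 and~5.5 there, relying on \cite{LiZhang} and \cite{Tan}): the stability of the limiting solution in a half-space or quarter-space forces $u_\infty\equiv 1$ in the first case and $u_\infty=u_0(x_{2m},\lambda)$ in the second. You sidestep this machinery entirely. Since Theorem~\ref{Th:Summary}~(ii) already gives $u-U\to 0$ on $\{\lambda=0\}$, you pass this to the limit to obtain $u_\infty(\cdot,0)=U_\infty(\cdot,0)$, and then uniqueness of the bounded $\lambda^a$-harmonic extension (Poisson representation) yields $u_\infty\equiv U_\infty$ in all of $\R^{2m+1}_+$, in particular at height $\lambda_0$. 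This is a clean bootstrap from the known $C^1$ asymptotic to $C^2$ that re-uses the hard work already encapsulated in Theorem~\ref{Th:Summary}~(ii), rather than re-running it. What the paper's route buys is independence from the earlier asymptotic result (its argument proves the $C^0$, $C^1$, and $C^2$ statements in one stroke); what your route buys is a much shorter, self-contained upgrade. Your concern about sequences approaching $\{st=0\}$ is legitimate but applies equally to the paper's sketch, which restricts to $x_k\in\ocal$ and does not address the point explicitly; as you observe, the only application of the lemma is to $u_{st}$ in $\{s>t>0\}$, where the issue is absent.
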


\begin{proof}
	The proof follows the ones of the analogous results in \cite{Cinti-Saddle2,Cabre-Saddle,CabreTerraII}, where a compactness argument is used. Therefore, we only give here the main ideas, since the details can be found in those papers. Arguing by contradiction, we suppose that the asymptotic result does not hold. Hence, there exists an $\varepsilon>0$ and a sequence $\{x_k\}\subset \ocal$ such that 
	\begin{equation}
	\label{Eq:AsymptoticContradiction}
	|D^2_x u(x_k,\lambda)-D^2_xU(x_k,\lambda)| \geq \varepsilon \ \ \ \text{ and } \ \ \ |x_k|\to+\infty. 
	\end{equation}
	
	Now we distinguish two cases, depending on whether the sequence $\{ \dist(x_k, \ccal) \}$ is unbounded or bounded. In the first case, we show that, up to a subsequence, the function $u_k(x,\lambda) := u(x+x_k,\lambda)$ converges to a solution $u_\infty$ of the semilinear Neumann problem in the half-space $\R^{2m+1}_+$ appearing in the statement of Theorem 5.3 in \cite{Cinti-Saddle2} (see \cite{LiZhang} for the proof). Using this result and the stability of $u_\infty$ we get that $u_\infty \equiv 1$. Thus,  $|D^2_x u(x_k,\lambda)| \to 0$, and since $|D^2_x U(x_k,\lambda)| \to 0$, we arrive at a contradiction with \eqref{Eq:AsymptoticContradiction}. 
	
	In the second case, we have $\dist(x_k, \ccal) = |x_k - x_k^0|$ bounded, where $x_k^0\in \ccal$. Since the Simons cone converges to a hyperplane at infinity (see the details in \cite{CabreTerraII}), it can be proved that, up to a subsequence and a rotation, the function $u_k(x,\lambda) := u(x+x_k^0,\lambda)$ converges to a positive solution $u_\infty$ of an equation in the quarter-space $\R^{2m+1}_{++} = \R^{2m+1}_{+} \cap \{x_{2m}>0\} $ with zero Dirichlet boundary conditions, as in the statement of Theorem 5.5 in \cite{Cinti-Saddle2} (see \cite{Tan} for the proof). Applying this last theorem and the stability again, we conclude that $u_\infty$ must be the 2D solution $u_0$ depending only on $x_{2m}$ and $\lambda$. Hence, $D^2_x(u-U)(x_k,\lambda)$ converges to zero, and we arrive at a contradiction with \eqref{Eq:AsymptoticContradiction}.
\end{proof}

With the help of the maximum principle of Proposition~\ref{Prop:MaxPrincipleLinearizedOperator}, the asymptotic result for the saddle-shaped solution, and the monotonicity properties of the layer solution, we can prove Proposition~\ref{Prop:MonotonicityProperties}.

\begin{proof}[Proof of Proposition~\ref{Prop:MonotonicityProperties}]
	We write \eqref{Eq:AllenCahnWithExtension} in $(s,t,\lambda)$ variables:
	\begin{equation}
	\label{Eq:AllenCahnEithExtensionST}
	\beqc{\PDEsystem}
	\! \! u_{ss} + u_{tt} + u_{\lambda \lambda}  & = & -  (m-1)\bpar{\dfrac{u_s}{s} + \dfrac{u_t}{t}}  - \dfrac{a}{\lambda} u_\lambda  & \text{in } \{st>0,\ \lambda > 0\},\! \! \! \!  \\
	\vspace{1mm}
	u_s & = & 0& \text{on } \{s=0, \ \lambda \geq 0 \},\! \! \! \!  \\
	u_t& = & 0& \text{on } \{t=0, \ \lambda \geq 0 \},\! \! \! \!  \\
	d_\s \dfrac{\partial u}{\partial \nu^a}& = & f(u)& \text{on } \{\lambda = 0\}.\! \! \! \! 
	\eeqc
	\end{equation}
	
	Differentiating the previous equation with respect to $s$ we find that
	$$
	\beqc{\PDEsystem}
	\div (\lambda^a \nabla u_s) &=& (m-1) \dfrac{\lambda^a}{s^2}u_s & \text{ in } \{s>t,\ \lambda > 0\}\,, \\
	d_\s \dfrac{\partial u_s}{\partial \nu^a}& = & f'(u)u_s& \text{ on } \{s>t,\ \lambda = 0\}\,.
	\eeqc
	$$
	Since $u = 0$ on $\{s = t,\ \lambda \geq 0 \}$ and $u>0$ in $\{s > t,\ \lambda \geq 0 \}$, we have that $u_s \geq 0$ on $\partial_L \{s>t,\ \lambda > 0 \} =\{s = t,\ \lambda \geq 0 \}$. Moreover, by the asymptotic result (point (ii) of Theorem~\ref{Th:Summary}) and the monotonicity properties of the layer solution $u_0$ (Lemma~\ref{Lemma:MonotonicityLayer}), we have
	$$
	\liminf_{\{s>t\} ,\ |(s,t)|\to +\infty} u_s(s,t,0) \geq 0\,.
	$$
	Indeed, if $u_0$ is the layer solution, 
	$$
	\partial_s U (x,0) = \dfrac{1}{\sqrt{2}} \partial_x u_0\bpar{\dfrac{s-t}{\sqrt{2}}, 0} \geq 0 
	$$
	and 
	$$
	\lim_{R\to +\infty} \norm{(u_s- \partial_s U)(\cdot,0)}_{L^\infty(\R^{2m}\setminus B_{R})} = 0\,.
	$$
	Thus, by the maximum principle for the linearized operator (Proposition~\ref{Prop:MaxPrincipleLinearizedOperator}) applied to $v=-u_s$, with $b(x,\lambda) = -(m-1)\lambda^a / s^2\leq 0$, we conclude that $u_s \geq 0$ in $\{s \geq t,\ \lambda \geq 0\}$.
	
	Similarly, if we differentiate \eqref{Eq:AllenCahnEithExtensionST} with respect to $t$, we obtain
	$$
	\beqc{\PDEsystem}
	\div (\lambda^a \nabla u_t) &=& (m-1) \dfrac{\lambda^a}{t^2}u_t & \text{ in } \{ s > t > 0 ,\ \lambda > 0\}\,, \\
	d_\s \dfrac{\partial u_t}{\partial \nu^a}& = & f'(u)u_t& \text{ on }  \{ s > t > 0 , \ \lambda = 0\}\,.
	\eeqc
	$$
	In the lateral boundary $\partial_L \{ s > t > 0 ,\ \lambda > 0\} = \{s = t,\ \lambda \geq 0 \}\cup \{t=0, \ \lambda \geq 0\}$ we have  $-u_t \geq 0$. Indeed, $u_t= 0$ on $\{t=0, \ \lambda \geq 0\}$, and since $u = 0$ on $\{s = t,\ \lambda \geq 0 \}$ and $u>0$ in $\{s > t,\ \lambda \geq 0 \}$, it holds $-u_t \geq 0$ on $\{s = t,\ \lambda \geq 0 \}$. Furthermore, the asymptotic behavior of $u$ and the monotonicity properties of the layer solution $u_0$ yield
	$$
	\limsup_{\{s> t>0\} ,\ |(s,t)|\to +\infty}  u_t(s,t,0) \leq 0\,.
	$$
	Indeed, 
	$$
	\partial_t U (x,0) = -\dfrac{1}{\sqrt{2}} \partial_1 u_0\bpar{\dfrac{s-t}{\sqrt{2}}, 0} \leq 0 
	$$
	and 
	$$
	\lim_{R\to +\infty} \norm{(u_t - \partial_t U) (\cdot,0) }_{L^\infty(\R^{2m}\setminus B_{R})} = 0\,.
	$$
	Thus, using again the maximum principle for the linearized operator we find that $-u_t \geq 0$ in $\{ s \geq t,\ \lambda \geq 0\}$.
	
	By the odd symmetry of $u$, i.e., $u(s,t)=-u(t,s)$, we conclude that  $u_s \geq 0$ and $u_t \leq 0$ in $\R^{2m} \times [0, +\infty)$. This fact and the strong maximum principle give that $u_s > 0$ in $(\R^{2m}\setminus \{s=0\}) \times [0, +\infty)$ and $- u_t > 0$ in $(\R^{2m}\setminus \{t=0\}) \times [0, +\infty)$.
	
	Now we check the sign of the $y$-derivative. We use that $\partial_y = (\partial_s + \partial_t)/\sqrt{2}$ to see that
	$$
	\div (\lambda^a \nabla u_y) = (m-1) \dfrac{\lambda^a}{\sqrt{2}}\bpar{ \dfrac{u_s}{s^2} + \dfrac{u_t}{t^2} } = (m-1) \dfrac{\lambda^a}{s^2} u_y + (m-1) \dfrac{\lambda^a}{\sqrt{2}} \dfrac{s^2 - t^2}{s^2t^2} u_t \, . 
	$$
	Hence, using that $u_t \leq 0$ in $ \{ s > t>0,\ \lambda > 0\} $ we get
	$$
	\beqc{\PDEsystem}
	\div (\lambda^a \nabla u_y) &\leq & (m-1) \dfrac{\lambda^a}{s^2} u_y & \text{ in } \{ s > t > 0 ,\ \lambda > 0\}\,, \\
	d_\s \dfrac{\partial u_y}{\partial \nu^a}& = & f'(u)u_y& \text{ on }  \{ s > t > 0 , \ \lambda = 0\}\,.
	\eeqc
	$$
	Note that, since $u$ vanishes at $\ccal \times [0, +\infty)$, $u_y = 0$ on $\{ s=t, \ \lambda \geq 0\}$. Moreover, $u_s\geq 0$ and $u_t = 0$ on $\{t=0, \ \lambda \geq 0\}$. Therefore, $u_y \geq 0$ on $\partial_L \{ s > t > 0 ,\ \lambda > 0\} = \{s = t,\ \lambda \geq 0 \}\cup \{t=0, \ \lambda \geq 0\}$. Furthermore, by the asymptotic behavior of $u$ and the monotonicity properties of the layer solution $u_0$ we have
	$$
	\liminf_{\{s> t>0\} ,\ |(s,t)|\to +\infty}  u_y(s,t,0) = 0\,,
	$$
	since
	$$
	\partial_y U (x,0) = \partial_y u_0(z,0)=  0 \quad \text{ and } \lim_{R\to +\infty} \norm{(u_y - \partial_y U) (\cdot,0) }_{L^\infty(\R^{2m}\setminus B_{R})} = 0\,.
	$$
	Again, by using the maximum principle of Proposition~\ref{Prop:MaxPrincipleLinearizedOperator}, we deduce that $u_y \geq 0$ in $\{ s \geq t,\ \lambda \geq 0\}$, and the strong maximum principle yields $u_y > 0$ on $\{ s > t,\ \lambda \geq 0\}$.
	
	Finally, we prove the last statement concerning the crossed derivatives. By differentiating \eqref{Eq:AllenCahnEithExtensionST}, first with respect to $s$ and then with respect to $t$, we find
	$$
	\ \ 
	\beqc{\PDEsystem}
	\div (\lambda^a \nabla u_{st}) & = & \!(m-1)\lambda^a \bpar{\dfrac{1}{s^2} + \dfrac{1}{t^2} } u_{st} & \text{ in } \{ s > t > 0 ,\ \lambda > 0\}, \\
	d_\s \dfrac{\partial u_{st}}{\partial \nu^a}& = & \!f'(u)u_{st} + f''(u)u_s u_t \geq  f'(u)u_{st}& \text{ on }  \{ s > t > 0 , \ \lambda = 0\}.
	\eeqc
	$$
	Here we have used that $f''(\tau) \leq 0$ if $\tau \in (0,1)$ and that $u_s u_t \leq 0$ in  $\{ s > t > 0 , \ \lambda = 0\}$. Note that, by symmetry, $u_{st}= 0$ on $\{s=t, \ \lambda \geq 0 \}$. Moreover, since $u_t(s,0,\lambda) = 0$ for every $s > 0$ and $\lambda \geq 0$, $u_{st}= 0$ on $\{t=0, \ \lambda \geq 0 \}$. Therefore, $u_{st}= 0$ on $\partial_L \{ s > t > 0 ,\ \lambda > 0\}$. In addition, by the asymptotic result of Lemma~\ref{Lemma:AsymptoticSecondDerivative} and the monotonicity properties of the layer solution $u_0$ (Lemma~\ref{Lemma:MonotonicityLayer}), we have
	$$
	\liminf_{\{s> t>0\} ,\ |(s,t)|\to +\infty}  u_{st}(s,t,0) \geq 0\,,
	$$
	since
	$$
	U_{st} (x,0) = -\dfrac{1}{2} \partial_1^2 u_0\bpar{\dfrac{s-t}{\sqrt{2}}, 0} \geq 0 $$
	and
	$$
	\lim_{R\to +\infty} \norm{(u_{st} - U_{st})(\cdot,0) }_{L^\infty(\R^{2m}\setminus B_{R})} = 0\,.
	$$
	Hence, by the maximum principle for the linearized operator (Proposition~\ref{Prop:MaxPrincipleLinearizedOperator}), we deduce that $u_{st} \geq 0$ in $\{ s \geq t,\ \lambda \geq 0\}$, and the strong maximum principle yields $u_{st} > 0$ in $\{ s > t > 0,\ \lambda \geq 0\}$.
\end{proof}

\section{Stability of the saddle-shaped solution and the Simons cone in dimensions $2m\geq 14$}
\label{Sec:Stability}

In this last section we prove our stability results. The first one is Theorem~\ref{Thm:Stability} and it establishes the stability of the saddle-shaped solution in dimensions $2m\geq 14$. The proof follows the strategy of its analogue in \cite{Cabre-Saddle} and it is based on finding a positive supersolution to the linearized problem. 

\begin{proof}[Proof of Theorem~\ref{Thm:Stability}]
	
	Let us show that $\varphi = t^{-b}u_s - s^{-b}u_t$, with $b(b-m+2)+m-1\leq 0$ and $b>0$, is a positive supersolution of the linearized operator. That is, it satisfies 
	\begin{equation}
	\label{Eq:VarphiSupersolution1}
	\varphi>0 \quad \text{ in } \overline{\R^{2m+1}_+}\setminus \{st=0, \ \lambda >0\}\,,
	\end{equation} 
	\begin{equation}
	\label{Eq:VarphiSupersolution2}
	-\div(\lambda^a \nabla \varphi) \geq 0 \quad \text{ in } \R^{2m+1}_+\setminus \{st=0, \ \lambda >0\}\,,
	\end{equation} 
	and  
	\begin{equation}
	\label{Eq:VarphiSupersolution3}
	\mathscr{L}_u \varphi \geq 0 \quad \text{ on } \R^{2m}  \setminus \{st=0\}\,.
	\end{equation} 
	
	Indeed, note that $\varphi>0$ in $\{s>t>0, \ \lambda \geq0\}$ by the monotonicity properties of $u$ (Proposition~\ref{Prop:MonotonicityProperties}). Since $\varphi$ is even with respect to the Simons cone, i.e., $ \varphi(t,s,\lambda) = \varphi(s,t,\lambda)$, it holds \eqref{Eq:VarphiSupersolution1}. Moreover, \eqref{Eq:VarphiSupersolution3} follows readily, since $\varphi$  satisfies 
	$$ 
	d_\s \frac{\partial \varphi}{\partial \nu^a} = f'(u)\,\varphi\,. 
	$$
	
	Let us now show \eqref{Eq:VarphiSupersolution2}. Since $\varphi$ is even with respect to the Simons cone, it is enough to check that $\div{(\lambda^a\,\nabla\varphi)}\leq 0$ in $\{s>t>0, \ \lambda >0\}$. By using that $\div(\lambda^a\,\nabla u) = 0$, we obtain by a direct computation that 
	\begin{align*}
	\lambda^{-a} \div(\lambda^a\,\nabla \varphi) &= b(b-m+2) \left( t^{-b-2}\,u_s - s^{-b-2}\,u_t \right) \\
	& \hspace{10mm} + (m-1) \left( t^{-b} s^{-2} u_s - s^{-b} t^{-2} u_t \right) \\
	& \hspace{10mm} + 2b \left( t^{-b-1} - s^{-b-1} \right) u_{st}\,.
	\end{align*}
	Now, by using that $u_{st}>0$, $u_y>0$ and $-u_t>0$ in $\{s>t>0, \ \lambda >0\}$, and the fact that $b>0$ satisfies $b(b-m+2)\leq -(m-1) $, we arrive at
	\begin{align*}
	\lambda^{-a} \div(\lambda^a\,\nabla \varphi) &\leq t^{-b} (u_s+u_t) \left( (m-1) s^{-2} + b(b-m+2) t^{-2}\right) \\
	& \hspace{10mm} - t^{-b} u_t \left\{ (m-1) s^{-2} + b(b-m+2) t^{-2} \right\} \\
	& \hspace{10mm} - s^{-b} u_t \left\{ (m-1) t^{-2} + b(b-m+2) s^{-2} \right\} \\
	&= \sqrt{2} t^{-b} u_y \left( (m-1) s^{-2} + b(b-m+2) t^{-2}\right) \\
	& \hspace{10mm} + (-u_t) (m-1) \left( t^{-b} s^{-2} + s^{-b} t^{-2} \right) \\
	& \hspace{10mm} + (-u_t) b (b-m+2) \left( t^{-2-b} + s^{-2-b} \right) \\
	&= \sqrt{2} (m-1) t^{-b} u_y \left( s^{-2} - t^{-2}\right) \\
	& \hspace{10mm} + (-u_t) (m-1) \left( t^{-b} s^{-2} + s^{-b} t^{-2} - t^{-2-b} - s^{-2-b} \right) \\
	& \leq (-u_t) (m-1) (s^{-b}-t^{-b}) (t^{-2}-s^{-2}) \\
	& \leq 0\,.
	\end{align*}
	Note that the existence of $b>0$ such that $b(b-m+2)\leq -(m-1)$ is guaranteed by the assumption $2m\geq 14$.

	Finally, let us show that since we have a positive supersolution to the linearized operator on $\R^{2m}\setminus \{st=0\}$, the stability of $u$ follows. We must check that \eqref{Eq:StabilityCondition} holds. To do it, let us first take nonnegative functions $\zeta\in C^1(\R^{2m+1}_+)$ with compact support in $\{st>0, \ \lambda \geq0\}$. Multiply \eqref{Eq:VarphiSupersolution2} by $\zeta$ and integrate by parts. Using \eqref{Eq:VarphiSupersolution3} we obtain
	\begin{equation}
	\label{Eq:IneqStabilityProof1}	
	\int_{\{st>0\}} f'(u) \, \varphi \, \zeta  \d x \leq d_\s \int_0^\infty \int_{\{st>0\}} \lambda^a \nabla{\varphi} \cdot \nabla{\zeta}  \d x \d \lambda\,.
	\end{equation}
	Now, let $\overline{\xi} \in C^\infty_c(\overline{\R^{2m+1}_+}\setminus \{st=0, \ \lambda \geq 0\})$. Since $\varphi > 0$ in $\{st>0, \lambda \geq 0\}$, taking $\zeta = \overline{\xi}^2/\varphi$ in \eqref{Eq:IneqStabilityProof1} and using the Cauchy-Schwarz inequality, we get
	\begin{align*}
	\int_{\{st>0\}}  \! f'(u)\,\overline{\xi}^2 \d x &= \int_{\{st>0\}} \! f'(u)\,\varphi \frac{\overline{\xi}^2}{\varphi} \d x \leq d_\s \int_0^\infty \! \!  \int_{\{st>0\}} \! \lambda^a \nabla{\varphi} \cdot \nabla{\left(\frac{\overline{\xi}^2}{\varphi}\right)} \d x \d \lambda \\
	&= d_\s \int_0^\infty \! \! \int_{\{st>0\}}\!  \lambda^a\,\frac{2\overline{\xi}}{\varphi} \,\nabla \varphi\cdot \nabla \overline{\xi}  \d x \d \lambda \\
	& \quad \ \quad - d_\s \int_0^\infty \! \!  \int_{\{st>0\}} \!  \lambda^a\,\frac{\overline{\xi}^2}{\varphi^2}|\nabla \varphi|^2  \d x \d \lambda \\
	&\leq d_\s \int_0^\infty \! \! \int_{\{st>0\}} \! \lambda^a\,|\nabla \overline{\xi}|^2  \d x \d \lambda\,.
	\end{align*}

	To conclude the proof, let us show that the last inequality holds for every smooth function $\xi$ with compact support in $\overline{\R^{2m+1}_+}$. This will yield the stability of $u$. Take $\eta_\varepsilon\in C^\infty(\R)$ such that $0\leq \eta_\varepsilon \leq 1$ and
	$$ \eta_\varepsilon = \begin{cases}
	1 \ \ \ \text{ in } \ \ \ [\varepsilon, +\infty)\,,\\
	0 \ \ \ \text{ in } \ \ \ [0, \varepsilon/2)\,.\\
	\end{cases} $$
	Then, since $\xi\,\eta_\varepsilon(s)\,\eta_\varepsilon(t)$ has compact support in $\{st>0, \ \lambda \geq 0\}$, we can replace $\overline{\xi}$ by $\xi\,\eta_\varepsilon(s)\,\eta_\varepsilon(t)$ in the previous inequality to get
	$$
	\frac{1}{d_\s}\int_{\R^{2m}} f'(u)\,\xi^2\,\eta_\varepsilon^2(s)\,\eta_\varepsilon^2(t) \d x \leq \int_{\R^{2m+1}_+} \lambda^a\,|\nabla (\xi\,\eta_\varepsilon(s)\,\eta_\varepsilon(t))|^2 \d x \d \lambda\,. $$
	Now, we compute the terms in the right-hand side of this inequality. By using Cauchy-Schwarz, we see that to deduce the stability condition 
	$$
	\frac{1}{d_\s}\int_{\R^{2m}} f'(u)\,\xi^2 \d x  
	\leq \int_{\R^{2m+1}_+} \lambda^a |\nabla \xi|^2  \d x \d \lambda
	$$
	by letting $\varepsilon \to 0$, it is enough to show that
	$$ \int_{\R^{2m+1}_+} \lambda^a|\nabla\eta_\varepsilon(s)|^2 \d x \d \lambda  \to 0 \ \ \ \text{ as } \ \ \varepsilon \to 0\,,$$
	and the same with $\eta_\varepsilon(s)$ replaced by $\eta_\varepsilon(t)$. To see this, let $R>0$ be such that $\text{supp}(\xi)\subset \overline{B_R^+}$. Then, since $m\geq 3$,
	\begin{align*}
	\int_{\R^{2m+1}_+} \lambda^a|\nabla\eta_\varepsilon(s)|^2 \d x \d \lambda &\leq \frac{C}{\varepsilon^2} \int_0^R \d \lambda \, \lambda^a \int_0^\varepsilon \d s \, s^{m-1} \int_0^R \d t \,t^{m-1} \\
	&\leq C\,R^{m+a+1}\,\varepsilon^{m-2} \to 0 \ \ \ \text{ as } \ \ \varepsilon \to 0\,,
	\end{align*}
	The computation is analogous for $\eta_\varepsilon(t)$.
\end{proof}

Finally, we present the proof of the stability of the Simons cone as a nonlocal $(2\s)$-minimal surface whenever $2m\geq 14$ and $\s\in(0,1/2)$.

\begin{proof}[Proof of Corollary~\ref{Cor:SimonsConeStableDim14}]
	Let $u$ be the saddle-shaped solution of \eqref{Eq:AllenCahn} in dimension $2m\geq 14$. Consider the blow-down sequence $u_k(x) = u(kx)$ with $k\in \N$. On the one hand, since $u$ is stable in such dimensions and $\s\in(0,1/2)$, by Theorem~2.6 in \cite{CabreCintiSerra-Stable} there exists a subsequence $k_j$ such that
	$$ u_{k_j} \to \chi_{\Sigma}-\chi_{\R^{2m}\setminus\Sigma} \quad \textrm{ in } L^1(B_1) \quad \textrm{ as } k_j\to+\infty\,, $$
	for some cone $\Sigma$ that is a stable set for the fractional perimeter.
	
	On the other hand, by the asymptotic behavior of $u$ (point (ii) in Theorem~\ref{Th:Summary}) it is clear that
	$$ u_{k} \to \chi_{\ocal}-\chi_{\R^{2m}\setminus\ocal} \quad \textrm{ a.e. as } k\to+\infty\,.$$
	
	Putting all together we conclude that $\ocal$ is a stable set for the fractional perimeter if $2m\geq 14$ and $\s\in(0,1/2)$. This is the same as saying that the Simons cone is a stable nonlocal $(2\s)$-minimal surface in such dimensions.
\end{proof}

\section*{Acknowledgements}

The authors thank Xavier Cabr\'e for his guidance and useful discussions on the topic of this paper.

\bibliographystyle{amsplain}
\bibliography{biblio}

\end{document}